\documentclass[10pt]{article}

\usepackage{amssymb}
\usepackage{graphicx}%
\usepackage{multirow}%
\usepackage{amsmath,amssymb,amsfonts}%
\usepackage{amsthm}%
\usepackage{mathrsfs}%
\usepackage[title]{appendix}%
\usepackage{xcolor}%
\usepackage{textcomp}%
\usepackage{manyfoot}%
\usepackage{booktabs}%
\usepackage{algorithm}%
\usepackage{algorithmicx}%
\usepackage{algpseudocode}%
\usepackage{listings}%
\usepackage{url}
\usepackage{fancyhdr}
\usepackage{enumitem} % pour pouvoir donner la forme des bullets dans itemize
\usepackage[a4paper,margin=2cm]{geometry}
\usepackage{authblk}
\usepackage[colorlinks=true,%linkcolor=green,
urlcolor=red, pdfauthor=DJAOUE Séraphin, pdftitle= Global analysis an Age-structured model of Malaria]{hyperref}

\newtheorem{theorem}{Theorem}
\newtheorem{proposition}{Proposition} 
\newtheorem{remark}{Remark}
\newtheorem{lemma}{Lemma}
\newtheorem{assumption}{Assumption}
\renewenvironment{proof}{{\bfseries Proof. \mbox{ }}}{\qed}
\newtheorem{definition}{Definition}

\newcommand{\R}{\mathbb{R}}
\renewcommand{\S}{\mathcal{S}}
\newcommand{\C}{\mathbb{C}}
\newcommand{\RR}{\mathcal{R}}
\renewcommand{\L}{\mathcal{L}}

\newcommand{\N}{\mathbb{N}}

\newcommand{\ess}{\textnormal{ess}}
\newcommand{\X}{\mathcal{X}}
\newcommand{\ep}{\varepsilon}
\newcommand{\Ro}{\mathcal{R}_0}
\newcommand{\Co}{\mathcal{C}}

\begin{document}
		 \date{}
		\title{Global stability in an age-structured SIRS malaria transmission model}
		\author[1]{S.~Djaoue}%\corref{cor1}
		\affil[1]{Department of Mathematics and Computer Science, Faculty of Science, University of Maroua, Cameroon.}
				
		\author[2,*]{Q.~Richard}
		\affil[2]{Institut Montpellierain Alexander Grothendieck, CNRS, University of Montpellier, 34090 Montpellier, France.}
		\affil[*]{Corresponding author: quentin.richard@umontpellier.fr}
		
		\author[3]{A.~Perasso}
		\affil[3]{UMR CNRS 6249 Chrono-environnement, University Bourgogne Franche-Comt\'{e}, France.}
		
		\author[4]{I.~Damakoa}
		\affil[4]{Department of Mathematics and Computer Sciences, Faculty of Science, University of Ngaoundere, Cameroon.}

\maketitle

\hrule 
\begin{abstract}
This paper proposes and analyzes a malaria transmission model structured by the chronological age of the human host population. The model couples an age-structured SIRS system for humans, incorporating waning immunity, with an SI system for mosquitoes under mass-action transmissions. Using integrated semigroup theory and spectral analysis, we establish the well-posedness of the model, derive the basic reproduction number, and prove the global asymptotic stability of the parasite-free equilibrium by using a Lyapunov functional, when $\Ro\leq 1$, thereby excluding the possibility of backward bifurcation. Numerical simulations further suggest the global stability of the endemic equilibrium when $\Ro>1$.
\end{abstract}

\textit{Keywords}: Malaria; Age-structured model; Global stability; Integrated semigroups.	
 	
\vspace{0.3cm}
 	    
\hrule 

\section{Introduction}
\label{sec:Introduction}

Malaria remains the most widespread and deadly parasitic disease worldwide, with 263 million cases and more than 597,000 deaths reported in 2023 \cite{WHO2024}. It continues to pose a major public health challenge, particularly in the WHO African Region, which bears the greatest burden of the disease. In 2023, this region accounted for an estimated $94\%$ of malaria cases and $95\%$ of deaths globally, with $76\%$ of these deaths occurring among children under the age of five \cite{hviid04,WHO2024}. Although children under five and pregnant women are the most vulnerable groups, recent studies show that older children and young adults (aged 5-30) have a higher infection prevalence and constitute the largest human reservoir for mosquitoes \cite{coalson18, felger12}. These observations highlight the importance of incorporating host age structure into malaria transmission models in order to capture more accurately the underlying epidemiological dynamics.

Mathematical modelling of malaria transmission dates back to the pioneering work of Ross in 1911 \cite{ross1911}. Since then, numerous models have been developed to study this vector-borne disease, integrating a variety of parameters to better understand transmission dynamics and improve control strategies (see \cite{Brauer2019,Inaba2017,LiMartcheva2020} and references therein). Most existing models rely on ordinary differential equations (ODEs) \cite{cai17, chitnis06, djidjou19,Esteva2009,li15}, difference equations \cite{li13, li19} or delay differential equations that account for incubation periods \cite{cooke79,ruan08, vargas12}. Some ODE-based frameworks stratify populations into discrete age classes \cite{addawe12, beretta18, Forouzannia2014,Forouzannia2015}, whereas more recent approaches use partial differential equations (PDEs) to represent continuous age structure in the host population \cite{vargas14,vogtgeisse12, vogtgeisse13} or in the vector population \cite{bellan10,Styer2007}. Chronological age can also be incorporated into within-host models \cite{aguas08,tumwiine08}. In addition, several models are structured by infection age (see for example \cite{wang18, wang20}), which can further refine the representation of within-host dynamics (see \cite{cai17b, djidjou13}). Finally, models that simultaneously incorporate both chronological age and infection-age structures have been proposed in \cite{Richard21,Richard24}.

In this paper, we propose an age-structured model describing the transmission of malaria parasites between mosquitoes and humans, in which the continuous structuring variable is the chronological age of the human population. The model is formulated as a SIRS system for the host population, accounting for waning immunity \cite{keegan13} and as a SI system for the vector population with mass-action forces of infection. The model is described in the next section and is given by:
\begin{equation}
\left\{ \begin{array}{lll}
(\partial_t + \partial_a) s_h(t,a) & =  -\lambda_h(t,a)s_h(t,a)- \mu_h(a) s_h(t,a)+ r_2(a) r_h(t,a)  \\ 
(\partial_t + \partial_a) i_h(t,a) & = \lambda_h(t,a)s_h(t,a) - (\mu_h(a)+\delta(a)+r_1(a))i_h(t,a)\\
(\partial_t + \partial_a) r_h(t,a) & = r_1(a) i_h(t,a) - (r_2(a)+\mu_h(a)) r_h(t,a) \\ 
S_v'(t) & = \Lambda_v  - \lambda_v(t)S_v(t) - \mu_v S_v(t) \\
I_v'(t) & = \lambda_v(t)S_v(t) - \mu_v I_v(t), \\
(s_h,i_h,r_h)(t,0) & = (\Lambda_h,0,0).
\end{array}
\right.
\label{system}
\end{equation}
for every $t>0$ and $a\geq 0$, with the following initial conditions
\begin{equation}
\left\{ \begin{array}{lll}
(s_h,i_h,r_h)(0,a) & = (s_{h,0},i_{h,0},r_{h,0})(a)  \\ 
(S_v,I_v)(0) & = (S_{v,0},I_{v,0})
\end{array}
\right.
\label{condition}
\end{equation}
and the following forces of infection
\begin{equation}
\lambda_h(t,a)  = \beta_v(a)I_v(t), \qquad \lambda_v(t)  = \int_0^{+\infty} \beta_h(a) i_h(t,a) da.
\label{eq3}
\end{equation}
After integration w.r.t. $a$, the model \eqref{system} rewrites as
\begin{equation}
\left\{ \begin{array}{lll}
S_h'(t)& = \Lambda_h -\beta_v I_v(t)S_h(t)- \mu_h S_h(t)+ r_2 R_h(t)  \\ 
I_h'(t)& = \beta_v I_v(t)S_h(t) - (\mu_h+\delta+r_1)I_h(t)\\
R_h'(t) & = r_1 I_h(t) - (r_2+\mu_h) R_h(t) \\ 
S_v'(t) & = \Lambda_v  - \beta_h S_v(t) I_h(t) - \mu_v S_v(t) \\
I_v'(t) & = \beta_h S_v(t)I_h(t)- \mu_v I_v(t).
\end{array}
\right.
\label{Eq:EDO}
\end{equation}
The ODE model \eqref{Eq:EDO} was previously investigated in \cite{Garba2008} in the case $r_2=0$, \textit{i.e.} in the absence of reinfection of recovered humans. In that work, the authors first examined the case where the forces of infection are ratio dependent (that is, $\lambda_h$ and $\lambda_v$ in \eqref{eq3} are proportional to the fraction of infected humans or mosquitoes relative to the total population). They proved the existence of a backward bifurcation, meaning that two endemic equilibria may exist when the basic reproduction number $\Ro$ is close and slightly below one. They then considered the case where the forces of infection follow a mass-action formulation, showing that only a forward bifurcation can occur: a unique endemic equilibrium exists only when $\Ro>1$, while the parasite-free equilibrium is globally asymptotically stable (GAS) for $\Ro\leq 1$.

This line of investigation was extended in \cite{niger08} to the case $r_2\neq 0$. In a more general model incorporating repeated-exposure immunity, the authors showed that in the ratio-dependent case, both their general model and \eqref{Eq:EDO} (see also \cite{Mukandavire2009}) exhibit backward bifurcations. In contrast, in the mass-action case, backward bifurcation may occur in the general model (where multiple infections are possible), whereas only forward bifurcation arises for \eqref{Eq:EDO}. They also proved that the parasite-free equilibrium is locally asymptotically stable (LAS) if $\Ro<1$.

It is worth noting that in \cite{niger08} a proof of the global stability of the endemic equilibrium is proposed in the case $r_2=0$. However, in the computations of the derivative of the Lyapunov candidate function, it is implicitly supposed that the forces of infection are constant while they should depend on both susceptible and infected populations. As a result, some terms cannot be cancelled as claimed.

In \cite{cai2013}, a similar SIR/SI model (corresponding to \eqref{Eq:EDO} with $r_2=0$) is analysed with an additional reinfection term allowing recovered humans to return to the infected compartment. The authors show that backward bifurcation occurs in the ratio-dependent case, whereas only forward bifurcations appears in the mass-action case.

In \cite{Roop2015}, the authors investigated a related SEIRS/SEI model with various incidence functions. They established the existence of backward bifurcation in the ratio-dependent case, while only forward bifurcation arises for Holling type I (which corresponds to mass-action) and type II incidence functions. The same study also proposes a proof of the GAS of the endemic equilibrium under strong assumptions. However, in the computation of the derivative of the Lyapunov candidate function, arguments of the form ``if $S_h<S_h^{**}$ and $E_h>E_h^{**}$ then $1-\frac{S_hE_h^{**}}{S_h^{**}E_h}>0$" are used to establish the non-positivity of the derivative. Such arguments are not valid unless it is first shown that the solution remains within an appropriate positively invariant set, which is not demonstrated in the paper.

In \cite{li15}, another related SIR/SI model (which reduces to \eqref{Eq:EDO} with $r_2=0$ for particular parameter values) is studied with the addition of vaccination compartments. Once again, the authors establish the presence of backward bifurcation in the ratio-dependent case but not in the mass-action case. They also analyse the stability properties of the equilibria, proving the GAS of the parasite-free equilibrium when $\Ro<1$ and the LAS of the endemic equilibrium when $\Ro>1$.

All other references mentioned above consider models that either employ ratio-dependent forces of infection \cite{addawe12,cai17,chitnis06,djidjou19,Esteva2009,Forouzannia2014,Forouzannia2015,Richard21,Richard24,vogtgeisse12,vogtgeisse13} , which typically lead to backward bifurcation, or study SIR/SI-type models without reinfection of recovered humans \cite{cai17b,vargas14,wang20}, or assume non-constant immigration rates \cite{beretta18,wang18}. See also \cite{mandal2011} for a review of the various compartmental models proposed in the literature.

To summarize, two properties were previously established for the ODE model: the local asymptotic stability of the parasite-free equilibrium and the existence of only a forward bifurcation \cite{niger08}. The analysis of the PDE model \eqref{system} presented here is new and involves two main difficulties: first, the structured of the model itself (SIRS/SI), and second, the age-structure of the human population. Indeed, in \cite{OkuwaInaba2021}, the authors studied an age-structured SIRS model without vector and left open the question of the uniqueness of the endemic equilibrium when $\Ro>1$. In the present paper, we mainly prove the global asymptotic stability of the parasite-free equilibrium when $\Ro\leq 1$ thereby ruling out the possibility of backward bifurcation, and extending existing results. 

In section 2, we present the PDE model \eqref{system} and establish the framework required to prove the well-posedness of a global positive solution using the theory of integrated semigroups. In Section 3, we first derive the basic reproduction number $\RR_0$ by means of the next-generation operator. We then linearise the system around each equilibrium, obtaining linear $\mathcal{C}_0$-semigroups, and apply spectral theory to determine the local stability of the parasite-free equilibrium when $\RR_0<1$ and its instability when $\RR_0>1$. In this section, we also employ classical results from dynamical systems to study the attractiveness of the parasite-free equilibrium when $\RR_0\leq 1$, using a Lyapunov functional defined on certain omega-limit sets. This Lyapunov functional further allows us to establish the global stability of this equilibrium in the case $\RR_0=1$. This section concludes with the existence and uniqueness of the endemic equilibrium in the ODE case and in the PDE case when $r_2\equiv 0$. Under these two assumptions, we prove the local asymptotic stability of the endemic equilibrium. Finally, Section 4 presents several numerical simulations for the PDE model, which suggest the global asymptotic stability of the endemic equilibrium when $\RR_0>1$.

\section{Well-posedness and global solution}
\label{sec:existence}

\subsection{Model formulation}
\label{sec:modelFormulation}

As presented in the introduction, in this paper we focus on the age-structured mathematical model of malaria transmission with waning immunity described by \eqref{system}. As humans might be repeatedly infected because they have not acquired permanent immunity, it is assumed that the human population is described by the SIRS (Susceptible-Infected-Recovered-Susceptible) equations. Recovered human hosts have temporary immunity that can be lost and are again susceptible to reinfection. On their side, mosquitoes are assumed not to recover from the parasites infection due to their short lifespan. Consequently the mosquito population is described by the SI equations. All newborns are susceptible to infection, and the development of malaria starts when the infectious female mosquito bites the human host. The vectors do not die from the infection or are otherwise harmed. In the model, the parameters have the following biological meaning:
\begin{itemize}
\item $S_v(t)$ and $I_v(t)$ respectively, denote the number of susceptible and infectious mosquitoes at time $t \geq 0$.
\item $s_h(t,a)$, $i_h(t,a)$ and $r_h(t,a)$ respectively denote the number of susceptible, infectious and recovered humans of age $a\geq 0$ at time $t$. Also for any $ a_1, a_2 \in (0, + \infty) $, $ a_1 <a_2 $, the quantity $\int_{a_1}^{a_2} s_h(t,a) da $ denotes the total number of susceptible individuals at time $t$ of ages between $a_1$ and $a_2$. In particular $\int_0^{+ \infty} s_h(t,a)da $ is the total density of susceptible humans at time $t$ and
$$N_h(t)=\int_0^\infty (s_h(t,a)+i_h(t,a)+r_h(t,a))da, \qquad N_v(t)=S_v(t)+I_v(t)$$
are respectively the total population of humans and mosquitoes at time $t$.

\item $\lambda_h(t,a)$ is the age-dependent infection rate of susceptible humans (following contact with infectious mosquitoes) of age $a$ at time $t$ defined by \eqref{eq3} where $\beta_v(a)$ is the parasite transmission rate from mosquitoes to humans of age $a$.
\item Susceptible adult mosquitoes acquire infection, following effective contact with an infectious human (via a blood meal), at a rate $\lambda_v(t)$, given by \eqref{eq3} where $\beta_h(a)$ is the parasite transmission rate from humans of age a to mosquitoes. 
\item $\Lambda_h$ and $\Lambda_v$ respectively denote the human and the mosquito recruitment rates while $\mu_v$ denotes the natural death rate of mosquitoes.
\item The age-dependent functions $\mu_h,\delta, r_1$ and $r_2$ are respectively the natural death rate, per capita parasite-induced death rate, per capita recovery rate and the per capita rate of loss immunity for human individual.
\end{itemize}
We end the description of the model with the following assumption.
\begin{assumption}\label{Assum:general}
We suppose that:
\begin{enumerate}
\item There exists $\mu_0>0$ such that $\mu_h(a) \geq \mu_0>0$ for almost every $a\geq 0$ and $\mu_v\geq \mu_0$,
\item $\Lambda_h>0$, $\Lambda_v>0$, $\mu_v>0$,
\item $\beta_v,\beta_h, \mu_h,\delta, r_1, r_2 \in L_+^{\infty}(0,+\infty)$.
\end{enumerate}
\end{assumption}
In all that follows we suppose that Assumption \ref{Assum:general} holds.

\subsection{Abstract Cauchy problem}

To prove the existence of solutions of the system \eqref{system}-\eqref{condition}, we follow \cite{magal2018} and use integrated semigroup theory whose approach was introduced by Thieme \cite{thieme90}. We first put the system \eqref{system} in the form of an abstract Cauchy problem.

In all that follows, let us define the Banach space $X= \mathbb{R} \times \left[L^1(0,\infty)\right]^3 \times \mathbb{R}^2$ endowed with the product norm 
\[ \left\Vert \left( \begin{array}{c}
\kappa \\ s_h \\ i_h \\r_h \\S_v \\ I_v
\end{array} \right) \right\Vert_X = |\kappa|+||s_h||_{L^1} + ||i_h||_{L^1}+||r_h||_{L^1} + |S_v|+|I_v| \]
and $X_+$ is the non-negative cone of $X$ that is $X_+ = \mathbb{R}_+ \times \left[L_+^1(0,\infty)\right]^3 \times \mathbb{R}_+^2$. For every constant $r>0$ and $x\in X$, we denote by $B_X(x,r)$ the ball of $X$ centred in $x$ and with radius $r$:
\begin{equation}\label{Eq:ball_r}
B_X(x,r) = \{y\in X: ||y-x||_X \leq r\}.
\end{equation}
Let $A: D(A) \subset X \rightarrow X$ be the linear operator defined by
\[ A \left( \begin{array}{c} 0 \\ s_h \\ i_h \\ r_h \\ S_v \\ I_v \end{array} \right)  = 
\left( \begin{array}{c} -s_h(0)\\ - s_h' - \mu_h  s_h  \\- i_h'- (\mu_h+\delta+r_1) i_h\\- r_h' - (r_2+\mu_h )r_h\\ -\mu_v S_v \\ - \mu_v I_v \\ \end{array}\right) \]
with domain 
\[ D(A) = \{0\}\times \left\{ (s_h,i_h,r_h) \in \left[ W^{1,1} (0,\infty)\right]^3  :  i_h (0) = r_h(0) = 0 \right\} \times \mathbb{R}^2. \]
One may note that $\overline{D(A)}= \{0\} \times(L^1(0,\infty))^3 \times \mathbb{R}^2$ and is not dense in $X$. Let $F : \overline{D(A)} \rightarrow X$ be the non-linear function defined by
\begin{equation}
F \left( \begin{array}{c} 0 \\ s_h \\ i_h \\ r_h\\ S_v \\ I_v \end{array} \right) = 
\left( \begin{array}{c} 
\Lambda_h \\ -\beta_v s_h I_v + r_2 r_h\\ 
\beta_v s_h I_v \\
r_1 i_h \\ 
\Lambda_v -S_v\int_0^{+\infty} \beta_h(a) i_h(a) da  \\ 
S_v\int_0^{+\infty} \beta_h(a) i_h(a) da \end{array} \right).
\label{eq4}
\end{equation}
We define $X_0 = \overline{D(A)}$ and $X_{0+} = X_0 \cap X_+ = \{0\} \times(L_+^1(0,\infty))^3 \times (\mathbb{R}_+)^2  $. Now by identifying $(0, s_h(t,.), i_h(t,.), r_h(t,.), S_v(t), I_v(t))$ to $\displaystyle x(t)$, the equation \eqref{system} can be rewritten as the following abstract semilinear Cauchy problem in $X_0$
\begin{equation}
\left\{ \begin{array}{rcl}
 \dfrac{d x(t)}{d t}  &=& A x(t) + F(x(t)), \ \forall t> 0\\
 x(0) &=& (0, s_{h,0},i_{h,0},r_{h,0}, S_{v,0},I_{v,0}) \in  X_{0}.
\end{array}  \right.
\label{pb_cauchy}
\end{equation}

\subsection{Linear and nonlinear part of Cauchy problem}

Here we handle the linear part with the following result.
\begin{proposition}
The operator $A:D(A) \subset X$ is a Hille-Yosida operator with $(-\mu_0,+\infty) \subset \rho(A)$ (which denotes the resolvent set of $A$) and for all  $\lambda > -\mu_0$, $(\lambda I - A)^{-1} X_+ \subset X_+$. Moreover, for all $\omega> 0$, the  operator $A-\omega I$ is also a Hille-Yosida operator.
\label{theorem_hille}
\end{proposition}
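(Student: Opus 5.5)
The plan is to compute the resolvent of $A$ explicitly and read off both the positivity and the Hille--Yosida estimate from the formula. Fix $\lambda>-\mu_0$ and $y=(\alpha,\varphi_s,\varphi_i,\varphi_r,y_S,y_I)\in X$. We look for $x=(0,s_h,i_h,r_h,S_v,I_v)\in D(A)$ with $(\lambda I-A)x=y$.

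\textbf{Step 1: solve the resolvent equation.} The first component gives the boundary condition $s_h(0)=\alpha$. The three age equations are decoupled linear ODEs in $a$:
$$s_h'+(\lambda+\mu_h)s_h=\varphi_s,\qquad i_h'+(\lambda+\mu_h+\delta+r_1)i_h=\varphi_i,\qquad r_h'+(\lambda+r_2+\mu_h)r_h=\varphi_r,$$
with $s_h(0)=\alpha$ and $i_h(0)=r_h(0)=0$. The variation of constants formula gives, for instance,
$$s_h(a)=\alpha\, e^{-\int_0^a(\lambda+\mu_h(\sigma))d\sigma}+\int_0^a e^{-\int_\tau^a(\lambda+\mu_h(\sigma))d\sigma}\varphi_s(\tau)\,d\tau,$$
and the analogous formulas for $i_h$ and $r_h$. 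The vector components are simply $S_v=y_S/(\lambda+\mu_v)$ and $I_v=y_I/(\lambda+\mu_v)$.

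\textbf{Step 2: the solution lies in $D(A)$ and is unique.} Since $\mu_h\ge\mu_0$, $\mu_v\ge\mu_0$ and $\delta,r_1,r_2\ge0$, every exponential kernel is bounded by $e^{-(\lambda+\mu_0)(a-\tau)}$. This decays because $\lambda+\mu_0>0$, so Young's inequality gives $s_h,i_h,r_h\in L^1$. The ODE itself then yields derivatives in $L^1$ (coefficients are $L^\infty$), hence $W^{1,1}$. Uniqueness follows because the homogeneous problem only has the trivial solution. Thus $(-\mu_0,\infty)\subset\rho(A)$. Positivity is immediate: all kernels are nonnegative, so $y\in X_+$ gives $x\in X_+$.

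\textbf{Step 3: the Hille--Yosida estimate.} This is the step needing the most care. Integrating the kernel bound, the $L^1$ norms satisfy
$$\|s_h\|_{L^1}\le\frac{|\alpha|+\|\varphi_s\|_{L^1}}{\lambda+\mu_0},\qquad \|i_h\|_{L^1}\le\frac{\|\varphi_i\|_{L^1}}{\lambda+\mu_0},\qquad \|r_h\|_{L^1}\le\frac{\|\varphi_r\|_{L^1}}{\lambda+\mu_0},$$
and $|S_v|+|I_v|\le(|y_S|+|y_I|)/(\lambda+\mu_0)$. Summing these gives
$$\|(\lambda I-A)^{-1}\|\le\frac{1}{\lambda+\mu_0}\quad\text{for all }\lambda>-\mu_0.$$
Resolvent bounds of the form $1/(\lambda-\omega)$ imply the powers estimate $\|(\lambda I-A)^{-n}\|\le(\lambda-\omega)^{-n}$ by submultiplicativity. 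Hence $A$ is Hille--Yosida with constants $M=1$ and $\omega_A=-\mu_0$. A product-norm argument is not even needed, since the estimate is obtained directly in the $X$-norm.

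\textbf{Step 4: the shifted operator.} For $\omega>0$, $A-\omega I$ has resolvent $(\lambda I-(A-\omega I))^{-1}=((\lambda+\omega)I-A)^{-1}$. This is defined for $\lambda>-\mu_0-\omega$ and bounded by $1/(\lambda+\omega+\mu_0)$. It is therefore Hille--Yosida with the same $M=1$.
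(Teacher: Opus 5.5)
Your proposal is correct and follows the same route as the paper: you solve $(\lambda I-A)x=y$ explicitly by variation of constants, bound the kernels by $e^{-(\lambda+\mu_0)(a-\tau)}$ to get $\|(\lambda I-A)^{-1}\|\le 1/(\lambda+\mu_0)$, iterate for the powers, and read positivity off the nonnegative kernels. You also spell out three points the paper leaves implicit: membership in $D(A)$, injectivity, and the one-line shift argument for $A-\omega I$.
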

\begin{proof}
Let $(\kappa, \phi_1,\phi_2,\phi_3,y_1,y_2) \in X$ and $(0, \psi_1,\psi_2,\psi_3,x_1,x_2) \in D(A)$ such that $(\lambda I - A)(0,\psi_1,\psi_2,\psi_3,x_1,x_2) = \break(\kappa, \phi_1,\phi_2,\phi_3,y_1,y_2)$. We have
\[
\left\{ \begin{array}{ll}
\psi_1(a) & = \kappa e^{-\int_0^a (\mu_h(s)+\lambda)ds} +  \int_0^a \phi_1(s) e^{-\int_s^a (\mu_h(\tau)+\lambda)d \tau} ds \\
\psi_2(a) & = \int_0^a \phi_2(s) e^{-\int_s^a (\mu_h(\tau)+\delta(\tau)+r_1(\tau)+\lambda)d \tau} ds \\
\psi_3(a) & = \int_0^a \phi_3(s) e^{-\int_s^a (\mu_h(\tau)+r_2(\tau)+\lambda)d \tau} ds \\
 x_1 & = \frac{1}{\lambda + \mu_v} y_1 \\
x_2 & = \frac{1}{\lambda + \mu_v}y_2
\end{array}  
\right.\]
for each $a \in (0,+\infty)$. Thus we get
$$ ||(\lambda I - A)^{-1}(\kappa, \phi_1,\phi_2,\phi_3,y_1,y_2)||_X \leq \frac{||(\kappa, \phi_1,\phi_2,\phi_3,y_1,y_2)||_X}{\lambda+\mu_0}.$$
It readily follows that $||(\lambda I - A)^{-n}||_{\mathcal{L}(X)} \leq \frac{1}{(\lambda + \mu_0)^n}$ for every $n \in \N $; so that $A$ is a Hille-Yosida operator with $(-\mu_0, +\infty) \subset \rho(A)$. The positivity of the resolvent then simply results from the above resolvent formula.
\end{proof}

Now we treat the nonlinear part by showing a Lipschitz property and a positivity property of $F$.
\begin{proposition}
The function $F:X_0 \rightarrow X$ given in \eqref{eq4} satisfies the following properties:
\begin{enumerate}
\item $F$ is Lipschitz continuous on bounded sets i.e. 

$\forall \ m>0$, $\exists \ c_m>0$ such that  $\forall (w,w') \in (B_{X_0}(0,m) \cap X_0)^2$, $||F(w)-F(w')||_X \leq c_m||w-w'||_X$.
\item $\forall m >0$, $\exists p_m >0$ such that 
\begin{equation*}
w \in B_{X_0}(0,m) \cap X_{0+} \Rightarrow F(w) + p_m w \in X_+. 
\end{equation*}
\end{enumerate}
\label{prop1}
\end{proposition}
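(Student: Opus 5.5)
We prove the two properties separately, writing $w=(0,s_h,i_h,r_h,S_v,I_v)$ and $w'=(0,s_h',i_h',r_h',S_v',I_v')$ in $B_{X_0}(0,m)$. Throughout we use the fact that each component of $w$ is bounded in norm by $\|w\|_X\le m$.

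\emph{Lipschitz property.} The constant terms $\Lambda_h,\Lambda_v$ cancel in $F(w)-F(w')$. The linear terms $r_2r_h$ and $r_1 i_h$ are bounded by $\|r_2\|_{L^\infty}\|r_h-r_h'\|_{L^1}$ and $\|r_1\|_{L^\infty}\|i_h-i_h'\|_{L^1}$. The bilinear terms are handled by the usual add-and-subtract trick. For the human component,
\[
\|\beta_v s_h I_v-\beta_v s_h' I_v'\|_{L^1}\le \|\beta_v\|_{L^\infty}\bigl(\|s_h-s_h'\|_{L^1}|I_v|+\|s_h'\|_{L^1}|I_v-I_v'|\bigr)\le \|\beta_v\|_{L^\infty}\, m\,\|w-w'\|_X .
\]
For the vector component we set $\Phi(i_h)=\int_0^\infty\beta_h i_h\,da$. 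Then $|\Phi(i_h)|\le\|\beta_h\|_{L^\infty}\|i_h\|_{L^1}$, and
\[
|S_v\Phi(i_h)-S_v'\Phi(i_h')|\le |S_v-S_v'|\,\|\beta_h\|_{L^\infty} m+m\|\beta_h\|_{L^\infty}\|i_h-i_h'\|_{L^1}.
\]
Each bilinear term appears twice in $F$, once with each sign. Summing everything gives
\[
c_m=2m\bigl(\|\beta_v\|_{L^\infty}+\|\beta_h\|_{L^\infty}\bigr)+\|r_1\|_{L^\infty}+\|r_2\|_{L^\infty},
\]
which is finite by Assumption \ref{Assum:general}.

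\emph{Positivity property.} Now take $w\in B_{X_0}(0,m)\cap X_{0+}$. The components of $F(w)+p_m w$ are checked one at a time:
\begin{enumerate}
\item The first component is $\Lambda_h>0$.
\item The third component $\beta_v s_h I_v+p_m i_h$ is nonnegative.
\item The fourth component $r_1 i_h+p_m r_h$ is nonnegative.
\item The sixth component $S_v\Phi(i_h)+p_m I_v$ is nonnegative.
\end{enumerate}
Two components carry a negative term. The second is
\[
(p_m-\beta_v(a)I_v)s_h+r_2r_h\ \ge\ (p_m-\|\beta_v\|_{L^\infty}m)s_h\quad\text{a.e.},
\]
which is nonnegative once $p_m\ge m\|\beta_v\|_{L^\infty}$. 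The fifth is
\[
\Lambda_v+(p_m-\Phi(i_h))S_v\ \ge\ \Lambda_v+(p_m-m\|\beta_h\|_{L^\infty})S_v,
\]
which is nonnegative once $p_m\ge m\|\beta_h\|_{L^\infty}$. So we take $p_m=m(\|\beta_v\|_{L^\infty}+\|\beta_h\|_{L^\infty})+1>0$.

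The only step needing any care is the pointwise a.e.\ estimate in the second component. It uses the fact that $I_v$ is a scalar with $|I_v|\le m$, so that $\beta_v I_v$ is an $L^\infty$ multiplier. There is no genuine obstacle.
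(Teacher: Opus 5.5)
Your proof is correct and follows essentially the same route as the paper: you add and subtract in the two bilinear terms, each counted twice since it appears with both signs, which yields exactly the paper's constant $c_m=2m(\|\beta_h\|_{\infty}+\|\beta_v\|_{\infty})+\|r_1\|_{\infty}+\|r_2\|_{\infty}$, and you use the same componentwise lower bounds on the second and fifth components to get $p_m\geq m(\|\beta_h\|_{\infty}+\|\beta_v\|_{\infty})$. Your extra $+1$ in $p_m$ only ensures $p_m>0$ without having to invoke $\beta_h,\beta_v\not\equiv 0$.
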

\begin{proof}
\begin{enumerate}
\item Let $m >0$ and $w = (0,s_h,i_h,r_h,S_v,I_v)^T$, $w' = (0,s_h',i_h',r_h',S_v',I_v')^T$ be two elements of $B_{X_0}(0,m)$. We have 
$$||F(w)- F(w')||_X  \leq \left( 2m ||\beta_h||_{\infty} + 2m ||\beta_v||_{\infty} + ||r_1||_{\infty} + ||r_2||_{\infty} \right) ||w- w'||_X.$$
With $L(m) = 2m (||\beta_h||_{\infty} + ||\beta_v||_{\infty}) + ||r_1||_{\infty}+||r_2||_{\infty}$, it follows that the first property is satisfied.
\item Let $m>0$ and $w = (0,s_h,i_h,r_h,S_v,I_v)^T \in B_{X_0}(0,m) \cap X_{0+}$. We have 
\[ F(w)+p_m w = 
\left( \begin{array}{c} 
\Lambda_h \\ -\beta_v s_h I_v + r_2 r_h + p_m s_h\\ 
\beta_v s_h I_v + p_m i_h \\
r_1 i_h + p_m r_h\\ 
\Lambda_v -S_v\int_0^{+\infty} \beta_h(a) i_h(a) da +p_m S_v \\ 
S_v\int_0^{+\infty} \beta_h(a) i_h(a) da + p_m I_v\end{array} \right).\]
Since we have 
$$- \beta_v(a) s_h(a) I_v + p_m s_h(a) =  (p_m- \beta_v(a) I_v) s_h(a)\geq (p_m- ||\beta_v||_{\infty} m ) s_h(a) 
$$
for each $a \in (0,\infty)$ and 
$$\Lambda_v -S_v\int_0^{+\infty} \beta_h(a) i_h(a) da + p_m S_v = \Lambda_v + S_v(p_m - \int_0^{+\infty} \beta_h(a) i_h(a) da)\geq \Lambda_v + S_v(p_m - ||\beta_h||_{\infty} m) da)
$$
then it follows that the second property is satisfied once $p_m \geq m(||\beta_h||_{\infty} + ||\beta_v||_{\infty})$.
\end{enumerate}
\end{proof}

We deduce the following result about the existence of a local solution.
\begin{proposition}
For each $x \in X_{0+}$, there exists $t_{\max} \leq +\infty$ and a unique continuous map $U(.)x \in \Co([0,t_{\max}), X_{0+})$ which is an integrated solution of the Cauchy problem \eqref{pb_cauchy} \textit{i.e.} 
\[ \int_0^t U(s)x ds \in D(A), \quad \forall t \in[0,t_{\max}] \]
and
\[U(t)x= x + A\int_0^t U(s)x ds + \int_0^tF(U(s)x)ds, \quad \forall t \in [0,t_{\max}]. \]
\label{prop_existence}
\end{proposition}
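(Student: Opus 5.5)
This is a direct application of the abstract existence theory for non-densely defined semilinear Cauchy problems in \cite{magal2018} (Magal--Ruan, Theorem 5.2.7, building on Thieme \cite{thieme90}). The plan is to verify the hypotheses of that theorem from Propositions \ref{theorem_hille} and \ref{prop1}, and then read off the conclusion.

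First, we use Proposition \ref{theorem_hille}: $A$ is a Hille--Yosida operator with $(-\mu_0,+\infty)\subset\rho(A)$. The part $A_0$ of $A$ in $X_0=\overline{D(A)}$ therefore generates a strongly continuous semigroup $(T_{A_0}(t))_{t\ge0}$ on $X_0$, and $A$ generates an integrated semigroup $(S_A(t))_{t\ge0}$ on $X$. Since the resolvent $(\lambda I-A)^{-1}$ preserves $X_+$ for large $\lambda$, both $T_{A_0}(t)$ and $S_A(t)$ are positive; this follows from the exponential-formula representations.

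Second, we use part 1 of Proposition \ref{prop1}: $F$ is Lipschitz on bounded sets of $X_0$. The fixed-point argument in \cite{magal2018} then applies to the variation of constants formula
\[
U(t)x=T_{A_0}(t)x+\frac{d}{dt}(S_A\diamond F(U(\cdot)x))(t)
\]
on a small interval $[0,\tau]$, where $\tau$ depends only on the bound of $\|x\|_X$. This gives a unique maximal integrated solution on $[0,t_{\max})$, with the usual blow-up alternative: if $t_{\max}<+\infty$, then $\limsup_{t\to t_{\max}}\|U(t)x\|_X=+\infty$. Being an integrated solution is equivalent to the two identities in the statement, namely $\int_0^tU(s)x\,ds\in D(A)$ and the integrated equation.

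Third, we establish positivity, which we expect to be the only point requiring care. We use part 2 of Proposition \ref{prop1}. For $x$ in a ball of radius $m$, we rewrite the problem as
\[
\frac{dx}{dt}=(A-p_mI)x+(F(x)+p_mx).
\]
Proposition \ref{theorem_hille} guarantees that $A-p_mI$ is still a positive Hille--Yosida operator. The nonlinearity $F+p_mI$ maps $B_{X_0}(0,m)\cap X_{0+}$ into $X_+$. The Picard iterates built with the positive integrated semigroup of $A-p_mI$ therefore remain in $X_{0+}$ on the short interval on which the solution stays in the ball. Since $X_{0+}$ is closed, the limit also stays in $X_{0+}$. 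Iterating this over successive intervals covers $[0,t_{\max})$.

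Finally, continuity of $t\mapsto U(t)x$ in $X_{0+}$ and uniqueness both come from the same theorem. Global existence, i.e.\ $t_{\max}=+\infty$, is not claimed here; it would follow later from a priori bounds on $N_h$ and $N_v$.
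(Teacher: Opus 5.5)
Your proposal is correct and takes the same route as the paper. The paper obtains the local integrated solution from \cite[Theorem 5.2.7]{magal2018} via Propositions \ref{theorem_hille} and \ref{prop1}, and non-negativity from \cite[Proposition 5.3.2]{magal2018}; your rewriting with $A-p_mI$ and $F+p_mI$ simply unpacks that positivity result, which rests on exactly property 2 of Proposition \ref{prop1}.
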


\begin{proof}
Using Proposition \ref{theorem_hille} and Proposition \ref{prop1} it follows from \cite[Theorem 5.2.7, p. 226]{magal2018} that there exists a unique local mild solution to the Cauchy problem \eqref{pb_cauchy}. Also, the solution is non-negative due to \cite[Proposition 5.3.2, p. 227]{magal2018}.
\end{proof}

\subsection{Boundedness and global existence}

We define the space $\X=(L^1(\R_+))^3\times \R^2$ and its positive cone $\X_+=(L^1_+(\R_+))^3\times \R^2_+$. We show here that the solution is global in time due to some boundedness properties, which is the main result of this section:

\begin{theorem}
\label{theorem_global}
For every $\widehat{x} = (s_{h,0},i_{h,0},r_{h,0},S_{v,0},I_{v,0}) \in \X_+$, there exists a unique
mild solution $(0,s_h,i_h,r_h,S_v,I_v) \in \Co([0,\infty),X_+)$ that induces a globally defined strongly continuous semiflow via
\[ \Phi: \R_+ \times \X_+ \ni (t,\widehat{x}) \longmapsto \Phi_t(\widehat{x}) = (s_h(t,.),i_h(t,.),r_h(t,.),S_v(t),I_v(t)) \]
where the solution satisfies for each $(t,a)\in \R_+^2$:
\begin{equation}\label{Eq:Estimates_sol1}
s_h(t,a)+i_h(t,a)+r_h(t,a)\leq \Lambda_h e^{-\int_0^a \mu_h(s)ds}\mathbf{1}_{\{a\leq t\}\\
}+(s_{h,0}+i_{h,0}+r_{h,0})(a-t)e^{-\int_{a-t}^a \mu_h(s)ds} \mathbf{1}_{\{a>t\}}
\end{equation}
whence
\begin{equation}\label{Eq:Estimates_sol2}
N_h(t)\leq \max\left\{\dfrac{\Lambda_h}{\mu_0},N_{h}(0)\right\}=:C_h.
\end{equation}
We also have
\begin{equation}\label{Eq:Estimates_sol3}
S_v(t)+I_v(t)=\dfrac{\Lambda_v}{\mu_v}+\left(N_v(0)-\dfrac{\Lambda_v}{\mu_v}\right)e^{-\mu_v t}
\end{equation}
and
\begin{equation}\label{Eq:Estimates_pos}
S_v(t)\geq \dfrac{\Lambda_v}{\mu_v+\|\beta_h\|_{L^\infty}C_h}\left(1-e^{-(\mu_v+\|\beta_h\|_{L^\infty}C_h)t}\right)+S_{v,0}e^{-(\mu_v+\|\beta_h\|_{L^\infty}C_h)t}.
\end{equation}
Moreover, the semiflow can be decomposed as $\Phi_t=(\Phi_t^{s_h}, \Phi_t^{i_h}, \Phi_t^{r_h}, \Phi_t^{S_v}, \Phi_t^{I_v})=(\Phi_t^h, \Phi_t^v)$ and $\Phi_t^h=\Phi_t^{h,1}+\Phi_t^{h,2}$
where:
\begin{equation}\label{Eq:Semiflow_h1}
\begin{array}{rcl}
\Phi_t^{h,1}(\widehat{x})(a)&=&\begin{pmatrix}
s_{h,0}(a-t)e^{-\int_{a-t}^a \mu_h(s)ds}e^{-\int_{a-t}^a \beta_v(s)I_v(t+s-a)ds} \\
i_{h,0}(a-t)e^{-\int_{a-t}^a (\mu_h(s)+\delta(s)+r_1(s))ds}\\
r_{h,0}(a-t)e^{-\int_{a-t}^a (r_2(s)+\mu_h(s))ds}
\end{pmatrix}\mathbf{1}_{\{a\geq t\}} \\
&&+ \begin{pmatrix}
\int_{a-t}^a r_2(s)r_h(t-a+s,s)e^{-\int_s^a (\mu_h(\xi)+\beta_v(\xi)I_v(t-a+\xi))d\xi}ds \\
\int_{a-t}^a \beta_v(s)I_v(t-a+s)s_h(t-a+s,s)e^{-\int_s^a (\mu_h(\xi)+\delta(\xi)+r_1(\xi))d\xi}ds \\
\int_{a-t}^a r_1(s)i_h(t-a+s,s)e^{-\int_s^a (r_2(\xi)+\mu_h(\xi))d\xi}ds 
\end{pmatrix}\mathbf{1}_{\{a>t\}}
\end{array}
\end{equation}
and
\begin{equation}\label{Eq:Semiflow_h2}
\Phi_t^{h,2}(\widehat{x})(a)=\begin{pmatrix}
\Phi_t^{s_h,2}(\widehat{x})(a)\\
\Phi_t^{i_h,2}(\widehat{x})(a)\\
\Phi_t^{r_h,2}(\widehat{x})(a)\end{pmatrix}=\begin{pmatrix}
\Lambda_h e^{-\int_0^a(\mu_h(s)+\beta_v(s)I_v(t-a+s))ds}+\int_0^a r_2(s)r_h(t-a+s,s)e^{-\int_s^a (\mu_h(\xi)+\beta_v(\xi)I_v(t-a+\xi))d\xi}ds\\
\int_0^a \beta_v(s)I_v(t-a+s)s_h(t-a+s,s)e^{-\int_s^a (\mu_h(\xi)+\delta(\xi)+r_1(\xi))d\xi}ds \\
\int_0^a r_1(s)i_h(t-a+s,s)e^{-\int_s^a (r_2(\xi)+\mu_h(\xi))d\xi}ds
\end{pmatrix}\mathbf{1}_{\{t\geq a\}}
\end{equation}
\end{theorem}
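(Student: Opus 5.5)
Starting from the local integrated solution given by Proposition \ref{prop_existence}, we show the solution cannot blow up in finite time. Since \cite[Theorem 5.2.7]{magal2018} gives the alternative that either $t_{\max}=+\infty$ or $\limsup_{t\to t_{\max}}\|U(t)x\|_X=+\infty$, global existence follows from a priori bounds on $[0,t_{\max})$. The mild solution of \eqref{pb_cauchy} is equivalently a solution along characteristics: the first three equations of \eqref{system} are linear transport equations with sources $\lambda_h s_h$, $r_2r_h$, $r_1i_h$. Integrating each along $t-a=\text{const}$ gives a Volterra-type representation. We split it according to whether the characteristic starts from the initial datum ($a\geq t$) or from the boundary condition ($a<t$). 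Treating $-\beta_v I_v s_h$ as a decay term for $s_h$ and $\beta_v I_v s_h$ as a source for $i_h$ gives exactly \eqref{Eq:Semiflow_h1}--\eqref{Eq:Semiflow_h2}, i.e. $\Phi^h_t=\Phi^{h,1}_t+\Phi^{h,2}_t$. Rigorously, we would check that this variation-of-constants formula is the integrated solution via the resolvent formula in the proof of Proposition \ref{theorem_hille}, or invoke the standard identification of integrated solutions of age-structured problems with characteristic solutions.

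\textbf{Human bounds.} Set $n(t,a)=s_h+i_h+r_h$. Summing the three PDEs, the infection and $r_1,r_2$ terms cancel, so
\[(\partial_t+\partial_a)n=-\mu_h n-\delta i_h\leq -\mu_h n,\qquad n(t,0)=\Lambda_h.\]
By positivity (Proposition \ref{prop_existence}), comparison along characteristics gives \eqref{Eq:Estimates_sol1}. Integrating \eqref{Eq:Estimates_sol1} in $a$ and using $\mu_h\geq\mu_0$:
\[N_h(t)\leq \frac{\Lambda_h}{\mu_0}(1-e^{-\mu_0 t})+N_h(0)e^{-\mu_0t}\leq C_h,\]
which is \eqref{Eq:Estimates_sol2}.

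\textbf{Mosquito bounds.} Adding the vector equations gives $N_v'=\Lambda_v-\mu_vN_v$, hence \eqref{Eq:Estimates_sol3}. For $S_v$, note that $\int\beta_h i_h\leq \|\beta_h\|_{L^\infty}C_h$, so
\[S_v'\geq \Lambda_v-(\mu_v+\|\beta_h\|_{L^\infty}C_h)S_v.\]
Gronwall's comparison then yields \eqref{Eq:Estimates_pos}.

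\textbf{Conclusion.} These bounds give $\|U(t)x\|_X\leq C_h+\max(N_v(0),\Lambda_v/\mu_v)$ on $[0,t_{\max})$, so $t_{\max}=+\infty$. The semiflow properties ($\Phi_{t+s}=\Phi_t\Phi_s$, continuity in $(t,\widehat x)$) come from uniqueness and the continuous dependence in \cite{magal2018}, since $F$ is Lipschitz on bounded sets (Proposition \ref{prop1}).

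\textbf{Main obstacle.} The delicate step is justifying the characteristic computations for a mild solution that is merely $L^1$ in $a$, especially the comparison $(\partial_t+\partial_a)n\leq-\mu_h n$ used for \eqref{Eq:Estimates_sol1}. I would handle this by working with the integral representation directly: write $n$ along each characteristic as the boundary or initial value times $e^{-\int\mu_h}$, minus a nonnegative integral involving $\delta i_h$. Alternatively, one can approximate by smooth data in $D(A)$ and pass to the limit using continuity of the semiflow.
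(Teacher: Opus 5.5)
Your proposal is correct and follows essentially the same route as the paper: the Volterra representation along characteristics with only $\mu_h$ as the decay rate (your ``main obstacle'' fix), summed so that the $\lambda_h$, $r_1$, $r_2$ terms cancel and the $\delta i_h$ term can be dropped by positivity, gives \eqref{Eq:Estimates_sol1}--\eqref{Eq:Estimates_sol2}. The rest also matches the paper: $N_v'=\Lambda_v-\mu_v N_v$ gives \eqref{Eq:Estimates_sol3}, the differential inequality for $S_v$ gives \eqref{Eq:Estimates_pos}, and the blow-up alternative gives $t_{\max}=+\infty$; you additionally indicate how \eqref{Eq:Semiflow_h1}--\eqref{Eq:Semiflow_h2} arise, by treating $\beta_v I_v$ as an extra decay rate for $s_h$, which the paper states but does not derive.
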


\begin{proof}
Let $x = (0,s_{h,0},i_{h,0},r_{h,0},S_{v,0},I_{v,0}) \in X_{0+}$ and set $U(t)x=(0,s_h(t,.),i_h(t,.),r_h(t,.),S_v(t),I_v(t)) \in \Co([0,t_{\max}),X_{0+})$ the solution of \eqref{pb_cauchy}. We see that
\[ N_v'(t) = \Lambda_v - \mu_v N_v(t). \]
A simple use of Gronwall's lemma implies \eqref{Eq:Estimates_sol3} for each $t\in[0,t_{\max})$. Now, note that the components $s_h, i_h$ and $r_h$ of the solution only satisfy \eqref{system} in the mild sense, hence it is not possible to use directly to integrate the equations of system \eqref{system}. However, one may note that the solution is given by
$$s_h(t,a)=\begin{cases} 
s_{h,0}(a-t)e^{-\int_{a-t}^a \mu_h(s)ds}+\int_{a-t}^a (r_2(s)r_h(t-a+s,s)-\lambda_h(t-a+s)s_h(t-a+s))e^{-\int_s^a \mu_h(\xi)d\xi}ds & \text{ if } a\geq t,\\
\Lambda_h e^{-\int_0^a \mu_h(s)ds}+\int_0^a (r_2(s)r_h(t-a+s,s)-\lambda_h(t-a+s)s_h(t-a+s))e^{-\int_s^a \mu_h(\xi)d\xi}ds & \text{ else.}
\end{cases}$$
$$i_h(t,a)=\begin{cases} 
i_{h,0}(a-t)e^{-\int_{a-t}^a \mu_h(s)ds}+\int_{a-t}^a (\lambda_h(t-a+s)s_h(t-a+s)-(r_1(s)+\delta(s))i_h(t-a+s,s))e^{-\int_s^a \mu_h(\xi)d\xi}ds & \text{ if } a\geq t,\\
\int_0^a (\lambda_h(t-a+s)s_h(t-a+s)-(r_1(s)+\delta(s))i_h(t-a+s,s))e^{-\int_s^a \mu_h(\xi)d\xi}ds & \text{ else.}
\end{cases}$$
$$r_h(t,a)=\begin{cases} 
r_{h,0}(a-t)e^{-\int_{a-t}^a \mu_h(s)ds}+\int_{a-t}^a (r_1(s)i_h(t-a+s,s)-r_2(s)r_h(t-a+s,s))e^{-\int_s^a \mu_h(\xi)d\xi}ds & \text{ if } a\geq t,\\
\int_0^a (r_1(s)i_h(t-a+s,s)-r_2(s)r_h(t-a+s,s))e^{-\int_s^a \mu_h(\xi)d\xi}ds & \text{ else.}
\end{cases}$$
Summing all three above equations lead to
$$n_h(t,a)\leq \begin{cases} n_{h,0}(a-t)e^{-\int_{a-t}^a \mu_h(s)ds}\leq n_{h,0}(a-t)e^{-\mu_0 t} & \text{ if } a\geq t,\\
\Lambda_h e^{-\int_0^a \mu_h(\xi)d\xi}ds\leq \Lambda_h e^{-\mu_0 a} & \text{ else.}
\end{cases}$$
where we defined
$$n_h(t,a)=s_h(t,a)+i_h(t,a)+r_h(t,a)$$
and $n_{h,0}(a)=n_h(0,a)$, which implies \eqref{Eq:Estimates_sol1}-\eqref{Eq:Estimates_sol2} for each $t\in[0,t_{\max})$. Finally, since we have
$$S_v'(t)\geq \Lambda_v-S_v(t)\left(\mu_v+\|\beta_h\|_{L^\infty}\|I_h(t,.)\|_{L^1}\right)$$
for each $t>0$, it follows from \eqref{Eq:Estimates_sol2} and a use of Gronwall's lemma that \eqref{Eq:Estimates_pos} holds for each $t\in[0,t_{\max})$. Now, if we suppose by contradiction that $t_{\max}<+\infty$ then, by \cite[Theorem 3.3]{magal2001} we would have 
\begin{equation}
\lim_{t \rightarrow t_{max}} \left(||s_h(t,.)||_{L^1} + ||i_h(t,.)||_{L^1} + ||r_h(t,.)||_{L^1} + |S_v|+|I_v|\right) = +\infty
\label{global1}
\end{equation} 
which contradicts either \eqref{Eq:Estimates_sol2} or \eqref{Eq:Estimates_sol3}, whence $t_{\max} = +\infty$. 

\end{proof}

\section{Equilibria and their Stability}
\label{sec:analysis}

In this section, we handle the existence and the stability of equilibria. Clearly, the parasite-free equilibrium always exists for the model \eqref{system} and is given by
$$E_0 = \left( s_h^0,0,0,S_v^0, 0 \right) \in \X_+$$
where $s_h^0(a) = \Lambda_h \exp(-\int_0^a \mu_h(s)ds)$ for every $a\geq 0$ and $S_v^0=\frac{\Lambda_v}{\mu_v}$. 

\subsection{The basic reproduction number $\RR _0$}

The stability of $E_0$ will depend on the basic reproduction number which is computed in this section. This quantity plays a very important role in epidemiological modelling, see for example \cite{perasso18} for an introduction and some references, and also \cite{Richard21} for its derivation in a similar model. We follow the classical procedure to derive this number, based on the next generation operator \cite{diekmann,Inaba2017}. Let us first define
\begin{equation}\label{Eq:R0}
\RR_0=\sqrt{\dfrac{S^0_v}{\mu_v}\int_0^\infty \int_0^\infty \beta_h(\xi+s)\beta_v(\xi)s^0_h(\xi)e^{-\int_\xi^{\xi+s}(\mu_h(u)+r_1(u)+\delta(u))du}d\xi ds}.
\end{equation}
and make the following
\begin{assumption}\label{Assumption:Omega}
We suppose that:
$$\int_0^\infty \int_0^\infty \beta_h(a+s)\beta_v(a) e^{-\mu_0(a+s)}da ds>0.$$
\end{assumption}
In all that follows we suppose that Assumption \ref{Assumption:Omega} holds. Note that this assumption simply reflects the fact that, at some point during their lifetime, humans are susceptible to infection (\textit{i.e.} $\beta_v(a)>0$ for some $a\in (0,\infty)$) and that they may subsequently transmit the infection to a mosquito (\textit{i.e.} $\beta_h(a+s)>0$ for some $s\geq 0$). We can now state the main result of this section.

\begin{theorem}\label{Thm:R0}
The number $\RR_0$ defined by \eqref{Eq:R0} is the basic reproduction number related to \eqref{system}.
\end{theorem}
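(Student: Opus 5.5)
We follow the next generation operator approach of \cite{diekmann,Inaba2017}: linearise \eqref{system} at $E_0$ keeping only the infected compartments $(i_h,I_v)$, split the resulting generator into a transmission part and a transition part, and identify $\RR_0$ with the spectral radius of the next generation operator $K$. Writing $i_h$ and $I_v$ as perturbations of $0$ and $s_h=s_h^0$, $S_v=S_v^0$, the linearised infected subsystem is
\begin{equation*}
(\partial_t+\partial_a) i_h = \beta_v(a)s_h^0(a)I_v(t)-(\mu_h+\delta+r_1)(a)i_h,\quad i_h(t,0)=0,\qquad I_v'(t)=S_v^0\int_0^\infty \beta_h(a)i_h(t,a)da-\mu_v I_v(t).
\end{equation*}
The transmission operator maps $(i_h,I_v)$ to the new infections $(\beta_v s_h^0 I_v,\ S_v^0\int\beta_h i_h)$, and the transition operator is the age-transport with loss rate $\mu_h+\delta+r_1$ on the human side and loss rate $\mu_v$ on the mosquito side.

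Next, compute $K=\mathcal{F}\mathcal{V}^{-1}$ on $L^1(0,\infty)\times\R$. Solving the resolvent-type equations at $\lambda=0$ as in the proof of Proposition \ref{theorem_hille}:
\begin{itemize}
\item A mosquito-side new infection density $y$ generates, on average, $y/\mu_v$ infected mosquitoes.
\item A human-side new infection density $\phi$ produces the profile $i_h(a)=\int_0^a\phi(\xi)e^{-\int_\xi^a(\mu_h+\delta+r_1)}d\xi$.
\end{itemize}
Hence $K$ acts as
\begin{equation*}
K(\phi,y)=\left(\beta_v(\cdot)s_h^0(\cdot)\frac{y}{\mu_v},\ S_v^0\int_0^\infty\beta_h(a)\int_0^a\phi(\xi)e^{-\int_\xi^a(\mu_h+\delta+r_1)(u)du}d\xi\, da\right).
\end{equation*}
$K$ is bounded by Assumption \ref{Assum:general}, since $s_h^0\le\Lambda_h e^{-\mu_0 a}$. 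It is positive, and it has rank at most two.

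It remains to compute the spectral radius. Because $K$ is off-diagonal, $K^2$ is block diagonal. Its $\R$-block is the scalar
\begin{equation*}
\frac{S_v^0}{\mu_v}\int_0^\infty\beta_h(a)\int_0^a\beta_v(\xi)s_h^0(\xi)e^{-\int_\xi^a(\mu_h+\delta+r_1)}d\xi\,da .
\end{equation*}
Substituting $a=\xi+s$ and applying Fubini turns this into exactly $\RR_0^2$ from \eqref{Eq:R0}. The $L^1$-block is a rank-one operator whose only nonzero eigenvalue is the same scalar, with eigenvector $\beta_v s_h^0$. Therefore $r(K^2)=\RR_0^2$, and since $r(K^2)=r(K)^2$ we get $r(K)=\RR_0$. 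Alternatively, one can directly exhibit the eigenvector $(\beta_v s_h^0/\mu_v,\ \RR_0)$ of $K$ associated with $\RR_0$.

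The main obstacle is justifying that this $r(K)$ really is the threshold quantity rather than merely a formal computation. Concretely, one must check that $\mathcal{V}$ generates a positive, exponentially stable semigroup, which follows from $\mu_h,\mu_v\ge\mu_0$ as in Proposition \ref{theorem_hille}, so that $\mathcal{V}^{-1}=\int_0^\infty e^{t\mathcal{V}}dt$ is well defined and positive. Assumption \ref{Assumption:Omega} ensures $\RR_0>0$, so the eigenvalue is strictly positive and nondegenerate. The sign relation $\operatorname{sign}(s(A_0))=\operatorname{sign}(\RR_0-1)$, where $A_0$ is the generator of the linearised infected system, is established later by the spectral analysis. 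Here it suffices to identify $\RR_0$ as $r(\mathcal{F}\mathcal{V}^{-1})$.
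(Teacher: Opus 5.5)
Your proof is correct, and it reaches the same operator $K$ as the paper by a somewhat different route. The paper does not split the generator into $\mathcal{F}$ and $\mathcal{V}$. Instead it writes the linearised infected system as a renewal equation $V(t)=f_0(t)+\int_0^t G(s)V(t-s)\,ds$ for the incidence vector $(B_h,B_v)$ and defines $K=\int_0^\infty G(s)\,ds$. After the substitution $a=\xi+s$ this is exactly your $\mathcal{F}\mathcal{V}^{-1}$; note that your $\mathcal{V}^{-1}$ on the infected components is just the resolvent $(-A)^{-1}$ of Proposition~\ref{theorem_hille} at $\lambda=0$. The renewal formulation gives the generation-by-generation interpretation of $K$ directly. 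Your splitting is shorter and connects to the general threshold results for resolvent-positive operators.

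The more substantial difference is in computing $r_\sigma(K)$. The paper proves compactness of $K$ with the Riesz--Fr\'echet--Kolmogorov criterion. It then identifies $r_\sigma(F_1)$, your $L^1$-block of $K^2$, by restricting to $\Omega$, checking irreducibility, and applying Krein--Rutman twice. Your observation that this block is the rank-one operator $\phi\mapsto \ell(\phi)\,\beta_v s_h^0$, with spectrum $\{0,\ell(\beta_v s_h^0)\}=\{0,\RR_0^2\}$, makes all of that unnecessary: finite rank gives compactness for free, and no positivity theory is needed. So your argument is strictly more elementary.

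Two small points:
\begin{itemize}
\item If $\mathcal{V}$ generates the exponentially stable semigroup, then $\int_0^\infty e^{t\mathcal{V}}dt=-\mathcal{V}^{-1}$, so fix your sign convention.
\item Exhibiting the eigenvector $(\beta_v s_h^0/\mu_v,\RR_0)$ on its own only gives $r_\sigma(K)\ge \RR_0$. It is your $K^2$ argument, not the ``alternatively'' remark, that proves equality.
\end{itemize}
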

\begin{proof}
We begin by linearising the system \eqref{system} in the neighbourhood of the parasite-free equilibrium $E_0$. Hence $(i_h,I_v)$ satisfies the following system:
\begin{equation*}
 \left\{ \begin{array}{rl}
\left(\partial_t+\partial_a\right) i_h(t,a) & =  \beta_v(a) I_v(t) s_h^0(a) -(\mu_h(a)+\delta(a)+r_1(a)) i_h(t,a)  \\
I_v'(t) & = S_v^0 \int_0^{+\infty} \beta_h(a) i_h(t,a)da - \mu_v I_v(t).\\
\end{array} \right.
% \left\{ \begin{array}{ll}
%\lambda_h(a)&  = \beta_v(a) I_v \\
%\lambda_v & =  \int_0^{+\infty} \beta_h(a) i_h(a)da \\
%\end{array}
%\right.
\end{equation*}
Using Volterra's formulation on the first equation of the latter system and a simple integration of the second equation, we get
\begin{equation*}
i_h(t,a) = \left\{ \begin{array}{ll}
i_{h,0}(a-t)e^{-\int_{a-t}^a (\mu_h(s)+\delta(s)+r_1(s))ds} + \int_0^t \beta_v(s+a-t)I_v(s)s_h^0(s+a-t) e^{-\int_{s+a-t}^a(\mu_h(\tau)+\delta(\tau)+r_1(\tau))d \tau}ds & a>t \\
\int_0^a \beta_v(s) I_v(s+t-a) s_h^0(s) e^{-\int_s^a (\mu_h(\tau)+\delta(\tau)+r_1(\tau))d\tau } ds & a \leq  t \end{array} \\ \right.
\end{equation*}
and
\begin{equation*}
I_v(t) = I_v(0)e^{-\mu_v t} + S_v^0 \int_0^t e^{-\mu_v(t-s)} \int_0^{+\infty} \beta_h(a) i_h(s,a)da ds.
\end{equation*}
Let us define $B_h(t,a)$ and $B_v(t)$ the number of newly infected humans of age $a$ and mosquitoes, at time $t$, by
$$B_h(t,a)=\beta_v(a) I_v(t)s^0_h(a), \qquad B_v(t)=S^0_v \int_0^\infty \beta_h(a)i_h(t,a).$$	
It follows that
$$I_v(t)=I_v(0)e^{-\mu_v t}+\int_0^t e^{-\mu_v(t-s)}B_v(s)ds=I_v(0)e^{-\mu_v t}+\int_0^t e^{-\mu_v s} B_v(t-s)ds$$
whence
$$B_h(t,a)=f_{0,h}(t,a)+\beta_v(a)s^0_h(a) \int_0^t e^{-\mu_v s}B_v(t-s)ds$$
%+\beta_v(a)s^0_h(a) \int_0^t e^{-\mu_v s}B_v(t-s)ds$$
where $f_{0,h}(t,a)=I_v(0)\beta_v(a)s^0_h(a)e^{-\mu_v t}$, whence $\|f_{0,h}\|_{L^1}\underset{t\to \infty}{\to}0$. Similarly, we see that 
\begin{equation*}
i_h(t,a)= \left\{ \begin{array}{ll} i_{h,0}(a-t)e^{-\int_{a-t}^a (\mu_h(s)+\delta(s)+r_1(s))ds} + \int_0^t B_h(s,s+a-t) e^{-\int_{s+a-t}^a(\mu_h(\tau)+\delta(\tau)+r_1(\tau))d \tau}ds & a>t \\
\int_0^a B_h(s+t-a,s)e^{-\int_s^a (\mu_h(\tau)+\delta(\tau)+r_1(\tau))d\tau } ds & a \leq  t \end{array} \\ \right.
\end{equation*}
so we deduce, after some computations, that
\begin{flalign*}
B_v(t)=f_{0,v}(t,a)+S^0_v \int_0^t \int_0^\infty \beta_h(u+s)B_h(t-u,s)e^{-\int_s^{s+u}(\mu_h(\xi)+\delta(\xi)+r_1(\xi))d\xi}d s du
%B_v(t)&=f_{0,v}(t)+\int_0^t ... B_h(t-s,...)ds
\end{flalign*}
where $f_{0,v}(t)=S_v^0\int_0^\infty \beta_h(s+t) i_{h,0}(s)e^{-\int_s^{s+t}(\mu_h(\xi)+\delta(\xi)+r_1(\xi))d\xi}ds \underset{t\to \infty}{\to} 0$. Now define the vector
$$V(t)=\begin{pmatrix}
B_h(t,\cdot) \\
B_v(t)
\end{pmatrix}\in L^1(0,\infty)\times \R$$
then we see that $V$ satisfies the following equation:
$$V(t)=f_0(t)+\int_0^t G(s)(V(t-s))ds$$
where $f_0(t)=(f_{0,h}(t,\cdot),f_{0,v}(t))^T\in L^1(0,\infty)\times \R$ encounters for the initial data and $G\in \mathcal{L}(L^1(0,\infty)\times \R)$ is the linear operator defined by:
$$G(s)\begin{pmatrix}
B_h \\
B_v
\end{pmatrix}=\begin{pmatrix}
\beta_v s^0_h e^{-\mu_v s}B_v\\
S_v^0 \int_0^\infty \beta_h(s+\xi)B_h(\xi)e^{-\int_\xi^{\xi+s}(\mu_h(u)+\delta(u)+r_1(u))du}d\xi
\end{pmatrix}$$
for each $s\geq 0$. Note that this operator $G(s)$ is called the \textit{net reproduction operator} \cite{Inaba2017} and maps the density of newborns to the density of their children produced at time $s$ later. It follows \cite{diekmann,Inaba2017} that the next generator operator is $K:L^1(0,\infty) \times \mathbb{R} \rightarrow L^1(0,\infty) \times \mathbb{R}$ and is defined by 
$$K\begin{pmatrix}
B_1 \\
B_2
\end{pmatrix}=\int_0^\infty G(s)\begin{pmatrix}
B_1 \\
B_2
\end{pmatrix}ds
=\begin{pmatrix}
\dfrac{\beta_v s^0_h B_2}{\mu_v}\\
S^0_v \int_0^\infty \int_0^\infty \beta_h(\xi+s)B_1(\xi)e^{-\int_\xi^{\xi+s}(\mu_h(u)+\delta(u)+r_1(u))du}d\xi ds
\end{pmatrix}=\begin{pmatrix}
K_1(B_1, B_2)^T \\
K_2(B_1, B_2)^T 
\end{pmatrix}.$$
From this operator $K$, we deduce that the basic reproduction number $\RR _0$ is defined by the spectral radius of $K$ that is $\RR _0=r_\sigma(K)$. We first prove that $K$ is a compact operator. Since $K_2$ has finite dimensional range and is clearly bounded, it is then sufficient to prove the compactness of $K_1$. Let $h\in \R_+$ and $S\subset L^1(0,\infty)\times \R$ be a bounded subset, so there exists $M>0$ such that $\|B_1\|_{L^1}+|B_2|\leq M$ for each $(B_1,B_2)\in L^1(0,\infty)\times \R$. We denote by $\tau_h$ the translation operator in $L^1$, \textit{i.e.}:
$$\tau_h(B_1)=B_1(\cdot+h) \qquad \forall B_1 \in L^1(0,\infty).$$ 
Let $(B_1, B_2)\in L^1(0,\infty)\times \R$, then 
$$\|\tau_h K_1(B_1,B_2)-K_1(B_1,B_2)\|_{L^1(0,\infty)}\leq \dfrac{M}{\mu_v}\|\tau_h(\beta_v s^0_h)-\beta_v s^0_h\|_{L^1(0,\infty)}\underset{h\to 0}{\to} 0$$
uniformly since $\beta_v s^0_h\in L^1(0,\infty)$. We also prove that
$$\sup_{(B_1,B_2)\in S}\int_r^\infty |K_1(B_1,B_2)(a)|da\leq \dfrac{M}{\mu_v \mu_0}\|\beta_v\|_{L^\infty}e^{-\mu_0 r}\underset{r\to \infty}{\to} 0.$$
It follows by the Riesz-Fréchet-Kolmogorov (RFK) criterion (see \textit{e.g.} \cite[Theorem X.1, p. 275]{yosida}) that $K_1$ and $K$ are compact operators, so the spectrum of $K$ is only composed of eigenvalues with finite algebraic multiplicity. Now, computing $K^2$, we see that this operator can be rewritten as
$$K^2\begin{pmatrix}
B_1 \\
B_2
\end{pmatrix}=\begin{pmatrix}
F_1(B_1) \\
F_2(B_2)
\end{pmatrix}$$
for each $(B_1,B_2)\in L^1(0,\infty)\times \R$, with $F_1, F_2$ the linear operators respectively defined on $L^1(0,\infty)$ and $\R$ by:
$$F_1(B_1)=\beta_v s^0_h \dfrac{S^0_v}{\mu_v}\int_0^\infty \int_0^\infty \beta_h(\xi+s)B_1(\xi)e^{-\int_\xi^{\xi+s}(\mu_h(u)+r_1(u)+\delta(u))du}d\xi ds$$
and
$$F_2(B_2)=\dfrac{S^0_v B_2}{\mu_v}\int_0^\infty \int_0^\infty \beta_h(\xi+s)\beta_v(\xi)s^0_h(\xi)e^{-\int_\xi^{\xi+s}(\mu_h(u)+r_1(u)+\delta(u))du}d\xi ds.$$
Let us set $\lambda_0=\RR_0^2$. We notice that $\lambda_0$ is the only eigenvalue of $F_2$, whence $r_\sigma(F_2)=\lambda_0$. We also see that
$$r_\sigma(K)=\sqrt{r_\sigma(K^2)}=\sqrt{\max\{r_\sigma(F_1),r_\sigma(F_2)\}}=\sqrt{\max\{r_\sigma(F_1),\lambda_0\}}.$$
We now shall prove that $r_\sigma(F_1)=\lambda_0$. For that, we define the set
$$\Omega:=\left\{\xi\geq 0: \beta_v(\xi)\int_0^\infty \beta_h(s+\xi)e^{-\mu_0(\xi+s)}ds>0\right\}$$
and the restriction of $F_1$ to $L^1(\Omega)$ denoted by $\tilde{F}_1\in \L(L^1(\Omega))$:
$$\tilde{F}_1(\tilde{B})(a)=\mathbf{1}_{\Omega}(a)F_1(B)(a)$$
for each $a\in \Omega$, $B\in L^1(\Omega)$ and with 
$$B(a)=\begin{cases}
\tilde{B}(s) & \text{a.e. } \quad a\in \Omega \\
0 & \text{else.}
\end{cases}$$
From Assumption \ref{Assumption:Omega} it is clear that $\tilde{F}_1$ is a positive and compact operator on $L^1(\Omega)$. Moreover, since $\Omega\neq \emptyset$, it follows that $\tilde{F}_1$ is irreducible, \textit{i.e.} 
$$\tilde{F}_1(B)(a)>0 \quad \text{a.e.} \quad a\in \Omega, \ \forall B\in L^1_+(\Omega)\setminus\{0\}$$
that is, sends the positive cone of $L^1_+(\Omega)$ on the subset of $L^1_+(\Omega)$ of functions almost everywhere strictly positive. We now observe that
$$F_1(\beta_v s^0_h)=\lambda_0 \beta_v s^0_h$$
so that $\lambda_0$ is an eigenvalue of $F_1$ associated to the eigenfunction $\beta_v s^0_h \in L^1(\Omega)$ that is positive a.e. on $\Omega$. It comes that $r_\sigma(F_1)\geq \lambda_0>0$. It follows from \cite[Lemma 4.2.10, p. 269]{MeyerNieberg91} that there exists a positive eigenfunction $\phi\in L^1_+(0,\infty)\setminus\{0\}$ such that 
$$F_1(\phi)=r_\sigma(F_1)\phi.$$
We remark that
$$\tilde{F}_1(\mathbf{1}_{\Omega}\phi)=r_\sigma(F_1)\mathbf{1}_{\Omega}\phi$$
so that $r_\sigma(F_1)$ is also an eigenvalue of $\tilde{F}_1$ associated to $\mathbf{1}_{\Omega}\phi\in L^1_+(\Omega)\setminus\{0\}$. Using a version of the Krein-Rutman theorem on the linear operator $\tilde{F}_1$ (see \cite[Corollary 4.2.15, p. 273]{MeyerNieberg91}), it follows that the spectral radius of $\tilde{F}_1$ is the only eigenvalue associated to a positive eigenfunction, whence $r_\sigma(F_1)=r_\sigma(\tilde{F}_1)$. By noticing on the other hand that
$$\tilde{F}_1(\mathbf{1}_{\Omega} \beta_v s^0_h)=\lambda_0 \mathbf{1}_{\Omega} \beta_v s^0_h$$
we deduce that $\lambda_0$ is an eigenvalue of $\tilde{F}_1$ associated to $\mathbf{1}_{\Omega} \beta_v s^0_h \in L^1_+(\Omega)$. Again, by Krein-Rutman theorem, we deduce that $r_\sigma(\tilde{F}_1)=\lambda_0$. We can then conclude that 
$$r_\sigma(K)=\sqrt{\lambda_0}=\RR_0.$$
\end{proof}

\subsection{Local stability of the parasite-free equilibrium}

In this section, we handle the local stability of the parasite-free equilibrium. Let $E=(s_h^*, i_h^*, r_h^*, S_v^*, I_v^*)\in X_+$ be an equilibrium, then we define $DF_{E}:X_0\to X$ the differential operator of $F$ around $E$ by:
\begin{equation}
\begin{array}{rcl}
DF_{E}:\begin{pmatrix}
0 \\
s_h \\
i_h \\
r_h \\
S_v \\
I_v
\end{pmatrix}&=&(DF_E)_1 \begin{pmatrix}
0 \\
s_h \\
i_h \\
r_h \\
S_v \\
I_v
\end{pmatrix}+(DF_E)_2 \begin{pmatrix}
0 \\
s_h \\
i_h \\
r_h \\
S_v \\
I_v
\end{pmatrix}+(DF_E)_3 \begin{pmatrix}
0 \\
s_h \\
i_h \\
r_h \\
S_v \\
I_v
\end{pmatrix}\\
&=&\begin{pmatrix}
0 \\
-\beta_v s_h^* I_v\\
\beta_v s_h^* I_v\\
0\\
-S^*_v\int_0^\infty \beta_h(a)i_h(a)da-S_v\int_0^\infty \beta_h(a)i_h^*(a)da \\
S^*_v\int_0^\infty \beta_h(a)i_h(a)da+S_v\int_0^\infty \beta_h(a)i_h^*(a)da
\end{pmatrix}+\begin{pmatrix}
0 \\
-\beta_v s_h I_v^*\\
0 \\
0 \\
0 \\
0
\end{pmatrix}+\begin{pmatrix}
0 \\
r_2 r_h \\
\beta_v s_h I_v^* \\
r_1 i_h \\
0 \\
0
\end{pmatrix}.
\end{array}
\end{equation}
We now state the following lemma, which brings the stability analysis to the search of eigenvalues. We first define $(A+DF_{E})_0$ the part of $A+DF_{E}$ in $\overline{D(A)}$ and $\{T_{(A+DF_{E})_0}(t)\}_{t\geq 0}$ the $C_0$-semigroup generated by $(A+DF_{E})_0$.

\begin{lemma}\label{Lemma:spectra}
The subset 
$$\{\lambda\in \sigma((A+DF_{E})_0), \Re(\lambda)\geq -\mu_0\}$$
is finite and composed at most of isolated eigenvalues with finite algebraic multiplicity, where $\sigma$ denotes the spectrum of the corresponding operator.
\end{lemma}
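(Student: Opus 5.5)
We will use the standard perturbation argument for non-densely defined Hille-Yosida operators, via the essential growth bound. The plan is to write $A+DF_E=(A+(DF_E)_2)+\big((DF_E)_1+(DF_E)_3\big)$ and treat the first part as the ``principal'' operator and the second as a compact perturbation. Then we invoke the result of Ducrot, Liu and Magal (see also \cite{magal2018}) on essential growth bounds. It states that if $B$ is Hille-Yosida and $L:X_0\to X$ is bounded, compact and linear, then $\omega_{0,\ess}((B+L)_0)\le \omega_{0,\ess}(B_0)$. Combined with the general fact that $\{\lambda\in\sigma(\cdot):\Re\lambda>\omega_{0,\ess}\}$ consists of finitely many isolated eigenvalues of finite algebraic multiplicity in any half-plane $\Re\lambda\ge\omega>\omega_{0,\ess}$, this gives the claim. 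The precise half-plane must match the statement: we need $\omega_{0,\ess}<-\mu_0$, or at least $\omega_{0,\ess}\le-\mu_0$ together with some care at the boundary line.

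\emph{Step 1.} The operator $B:=A+(DF_E)_2$ acts diagonally on $s_h$ by $-s_h'-(\mu_h+\beta_vI_v^*)s_h$. Its part in $X_0$ generates an explicit semigroup, given by transport along characteristics with death rates at least $\mu_0$ (resp.\ $\mu_v\ge\mu_0$) on the ODE components. Arguing exactly as in Proposition \ref{theorem_hille}, $B$ is Hille-Yosida and $\|T_{B_0}(t)\|\le e^{-\mu_0 t}$. Hence $\omega_0(B_0)\le-\mu_0$.

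\emph{Step 2.} The perturbation $(DF_E)_1$ has range contained in a finite-dimensional space. Its $s_h,i_h$ components are multiples of the fixed $L^1$ function $\beta_vs_h^*$, and the remaining components are real numbers. So it is bounded and of finite rank, hence compact.

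\emph{Step 3, the main obstacle.} The term $(DF_E)_3$ contains the multiplication operators $r_2r_h$, $r_1i_h$ and $\beta_vI_v^*s_h$, which are \emph{not} compact on $L^1$. We will not treat them as perturbations. Instead we absorb them into the principal part: set $B=A+(DF_E)_2+(DF_E)_3$. This is a transport operator with a lower-triangular-plus-$r_2$ coupling matrix
\[
M(a)=\begin{pmatrix}-\mu_h-\beta_vI_v^* & 0 & r_2\\ \beta_vI_v^* & -(\mu_h+\delta+r_1) & 0\\ 0& r_1 & -(r_2+\mu_h)\end{pmatrix}.
\]
Its columns sum to at most $-\mu_h\le-\mu_0$. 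Therefore the resolvent, obtained by solving the linear ODE in $a$, is positive and satisfies the $L^1$ bound $\|(\lambda-B)^{-1}\|\le(\lambda+\mu_0)^{-1}$. This follows by integrating the sum of the components, as in the proof of Theorem \ref{theorem_global}. Consequently $\omega_0(B_0)\le-\mu_0$. The remaining perturbation $(DF_E)_1$ is finite rank, so
\[
\omega_{0,\ess}((A+DF_E)_0)\le\omega_{0,\ess}(B_0)\le\omega_0(B_0)\le-\mu_0 .
\]

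\emph{Step 4, the boundary line.} To obtain the closed half-plane $\Re\lambda\ge-\mu_0$ as stated, we note that the bound in Step 3 can be sharpened slightly. Alternatively, one shows directly that eigenvalues with $\Re\lambda\ge-\mu_0$ are roots of a characteristic equation $\Delta(\lambda)=0$, obtained by reducing via the finite-rank structure to a scalar equation in $I_v$ and $S_v$. This $\Delta$ is analytic on $\Re\lambda>-\mu_0-\varepsilon$ for small $\varepsilon>0$, using $\mu_h\ge\mu_0$ together with the integrability of $s_h^*,i_h^*$, and it tends to a nonzero constant as $|\lambda|\to\infty$. Its zeros in the closed half-plane are therefore finite and isolated. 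If the essential bound is only $\le-\mu_0$, this is the step that needs the most care.
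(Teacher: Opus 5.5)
Your Steps 1--3 are the paper's proof. The paper also keeps $A+(DF_E)_2+(DF_E)_3$ as the principal part, removes the finite-rank $(DF_E)_1$ with the Ducrot--Magal essential-growth-bound theorem, and gets $\omega_0\le-\mu_0$ by summing the three human components. The paper obtains positivity from Voigt's perturbation theorem and then estimates the semigroup on positive data. You instead read positivity and $\|(\lambda-B)^{-1}\|\le(\lambda+\mu_0)^{-1}$ directly off the Metzler matrix $M(a)$, whose column sums are $\le-\mu_h$. That is equally valid, and your Step 1 is superseded by Step 3. You were also right to flag the boundary line, which the paper passes over silently: Engel--Nagel, Cor.~IV.2.11, gives finiteness only in $\{\Re\lambda\ge w\}$ for $w>\omega_{\ess}$ strictly.

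Step 4, however, fails and cannot be repaired. With only $\omega_{\ess}\le-\mu_0$, the closed-half-plane statement is false in general. Take $\mu_h\equiv\mu_0$ and $r_2\equiv0$, both admissible under Assumption~1. Then $r_h$ enters $DF_E$ nowhere, so $(A+DF_E)_0$ maps $x=(0,0,0,\psi,0,0)$, with $\psi\in W^{1,1}$ and $\psi(0)=0$, to $(0,0,0,-\psi'-\mu_0\psi,0,0)$. Choose $\psi_n(a)=e^{-i\omega a}n^{-1}\chi(a/n)$ with $\chi\in C_c^\infty(0,\infty)$ and $\|\chi\|_{L^1}=1$, and set $x_n=(0,0,0,\psi_n,0,0)$. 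Then $\|\psi_n\|_{L^1}=1$ and $\|((A+DF_E)_0+\mu_0-i\omega)x_n\|_X=n^{-1}\|\chi'\|_{L^1}\to0$. So the whole line $-\mu_0+i\R$ lies in the approximate point spectrum, for every equilibrium $E$.

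Consequently there is nothing to sharpen in Step 3, and the characteristic-equation route cannot help. Reducing to $\Delta(\lambda)=0$ presupposes $\lambda\in\rho(B_0)$ and only detects eigenvalues whose eigenvectors excite the finite-rank part. It is therefore blind to $\sigma(B_0)$, which here contains that line, and $\Delta$ need not extend analytically across it.

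What Steps 1--3 actually prove, and what the paper's argument proves, is the corrected statement: for every $\omega>-\mu_0$, the set $\sigma((A+DF_E)_0)\cap\{\Re\lambda\ge\omega\}$ is finite and consists of isolated eigenvalues of finite algebraic multiplicity. This is all that Proposition~\ref{Prop:LAS_E0} uses, since it needs only $\Re\lambda\ge0$. If you want the closed half-plane literally, you must assume $\mu_0<\min\{\mathrm{ess\,inf}\,\mu_h,\mu_v\}$, so that Step 3 yields $\omega_0(B_0)<-\mu_0$. Replace Step 4 by this restatement rather than trying to prove the boundary case.
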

\begin{proof}
The linear operator $(DF_E)_1:X_0\to X$ is clearly compact so we get
$$\omega_{\ess}(\{T_{(A+DF_E)_0}(t)\}_{t\geq 0})\leq\omega_{\ess}(\{T_{(A+(DF_E)_2+(DF_E)_3)_0}(t)\}_{t\geq 0})$$
by \cite[Theorem 1.2]{DucrotMagal2008} where $\omega_{\ess}$ denotes the essential growth bound and $(A+(DF_E)_2+(DF_E)_3)_0$ is the part of $A+(DF_E)_2+(DF_E)_3$ in $\overline{D(A)}$, \textit{i.e.} $(A+(DF_E)_2+(DF_E)_3)_0:\overline{D(A)}\ni x\longmapsto (A+(DF_E)_2+(DF_E)_3)x\in \overline{D(A)}$. Now, by definition, we know that
$$\omega_{\ess}(\{T_{(A+(DF_E)_2+(DF_E)_3)_0}(t)\}_{t\geq 0})\leq \omega_{0}(\{T_{(A+(DF_E)_2+(DF_E)_3)_0}(t)\}_{t\geq 0})$$
where $\omega_0$ denotes the growth bound. To compute this quantity, we first check that the semigroup \break $\{T_{(A+(DF_E)_2+(DF_E)_3)_0}(t)\}_{t\geq 0}$ is positive. To this end, as in the proof of Proposition \ref{theorem_hille}, let $(\kappa, \phi_1,\phi_2,\phi_3,y_1,y_2) \in X$ and $(0, \psi_1,\psi_2,\psi_3,x_1,x_2) \in D(A)$ such that
$$(\lambda I - (A+(DF_E)_2))(0,\psi_1,\psi_2,\psi_3,x_1,x_2) = \break(\kappa, \phi_1,\phi_2,\phi_3,y_1,y_2).$$
We have
\[
\left\{ \begin{array}{ll}
\psi_1(a) & = \kappa e^{-\int_0^a (\mu_h(s)+\beta_v(s)I_v^*+\lambda)ds} +  \int_0^a \phi_1(s) e^{-\int_s^a (\mu_h(\tau)+\beta_v(s)I_v^*+\lambda)d \tau} ds \\
\psi_2(a) & = \int_0^a \phi_2(s) e^{-\int_s^a (\mu_h(\tau)+\delta(\tau)+r_1(\tau)+\lambda)d \tau} ds \\
\psi_3(a) & = \int_0^a \phi_3(s) e^{-\int_s^a (\mu_h(\tau)+r_2(\tau)+\lambda)d \tau} ds \\
 x_1 & = \frac{1}{\lambda + \mu_v} y_1 \\
x_2 & = \frac{1}{\lambda + \mu_v}y_2
\end{array}  
\right.\]
for each $a \in (0,+\infty)$. This proves that the linear operator $A+(DF_E)_2$ is resolvent positive and the semigroup $\{T_{(A+(DF_E)_2)_0}(t)\}_{t\geq 0}$ is positive. Since the linear operator $(DF_E)_3\in \mathcal{L}(X_0,X)$ is positive then by using \cite[Theorem 1.1]{voigt89}, one has
$$(\lambda-(A+(DF_E)_2+(DF_E)_3))^{-1}=(\lambda-(A+(DF_E)_2)))^{-1}\sum_{n=0}^\infty \left((DF_E)_3(\lambda-(A+(DF_E)_2))^{-1}\right)^n$$
which implies that $A+(DF_E)_2+(DF_E)_3$ is resolvent positive and then the semigroup $\{T_{(A+(DF_E)_2+(DF_E)_3)_0}(t)\}_{t\geq 0}$ generated by $(A+(DF_E)_2+(DF_E)_3)_0$ is positive. Since the semigroup is positive, we can now compute the operator norm of $\{T_{(A+(DF_E)_2+(DF_E)_3)_0}(t)\}_{t\geq 0}$ as
$$\|T_{(A+(DF_E)_2+(DF_E)_3)_0}(t)\|_{\mathcal{L}(X_0)}=\sup_{x\in X_{0+}, \|x\|_{X_0}=1}\|T_{(A+(DF_E)_2+(DF_E)_3)_0}(t)x\|_{X_0}.$$
Let $x=(0,s_{h,0},i_{h,0},r_{h,0},S_{v,0},I_{v,0})\in X_0$ with $\|x\|_{X_0}=1$. Following the proof of Theorem \ref{theorem_global}, we denote by
$$T_{(A+(DF_E)_2+(DF_E)_3)_0}(t)x=(0,s_h(t,.),i_h(t,.),r_h(t,.),S_v(t),I_v(t))$$
the positive solution (which exists and is unique since $A+(DF_E)_2+(DF_E)_3$ is a Hille-Yosida operator). We see that
$$s_h(t,a)=\begin{cases} 
s_{h,0}(a-t)e^{-\int_{a-t}^a \mu_h(s)ds}+\int_{a-t}^a (r_2(s)r_h(t-a+s,s)-\beta_v(t-a+s)I_v^*s_h(t-a+s))e^{-\int_s^a \mu_h(\xi)d\xi}ds & \text{ if } a\geq t,\\
\int_0^a (r_2(s)r_h(t-a+s,s)-\beta_v(t-a+s)I_v^*s_h(t-a+s))e^{-\int_s^a \mu_h(\xi)d\xi}ds & \text{ else.}
\end{cases}$$
$$i_h(t,a)=\begin{cases} 
i_{h,0}(a-t)e^{-\int_{a-t}^a \mu_h(s)ds}+\int_{a-t}^a (\beta_v(t-a+s)I_v^*s_h(t-a+s)-(r_1(s)+\delta(s))i_h(t-a+s,s))e^{-\int_s^a \mu_h(\xi)d\xi}ds & \text{ if } a\geq t,\\
\int_0^a (\beta_v(t-a+s)I_v^*s_h(t-a+s)-(r_1(s)+\delta(s))i_h(t-a+s,s))e^{-\int_s^a \mu_h(\xi)d\xi}ds & \text{ else.}
\end{cases}$$
$$r_h(t,a)=\begin{cases} 
r_{h,0}(a-t)e^{-\int_{a-t}^a \mu_h(s)ds}+\int_{a-t}^a (r_1(s)i_h(t-a+s,s)-r_2(s)r_h(t-a+s,s))e^{-\int_s^a \mu_h(\xi)d\xi}ds & \text{ if } a\geq t,\\
\int_0^a (r_1(s)i_h(t-a+s,s)-r_2(s)r_h(t-a+s,s))e^{-\int_s^a \mu_h(\xi)d\xi}ds & \text{ else.}
\end{cases}$$
Summing the three equations leads to
$$n_h(t,a)\leq \begin{cases} n_{h,0}(a-t)e^{-\int_{a-t}^a \mu_h(s)ds}\leq n_{h,0}(a-t)e^{-\mu_0 t} & \text{ if } a\geq t,\\
0 & \text{ else.}
\end{cases}$$
Since we have
$$S_v(t)=S_{v,0}e^{-\mu_v t} \qquad I_v(t)=I_{v,0}e^{-\mu_v t}$$
it follows that
$$\|T_{(A+(DF_E)_2+(DF_E)_3)_0}(t)x\|_{X_0}\leq e^{-\mu_0t}\|x\|_{X_0}$$
whence 
$$\omega_{0}(\{T_{(A+(DF_E)_2+(DF_E)_3)_0}(t)\}_{t\geq 0})\leq -\mu_0<0.$$
Consequently, the essential growth bound is negative and the statement is true by means of \cite[Cor. IV. 2.11 p. 258]{EngelNagel2000}.
\end{proof}

\begin{proposition}\label{Prop:LAS_E0}
If $\RR_0<1$ then $E_0$ is locally asymptotically stable; if $\RR_0>1$ then $E_0$ is unstable.
\end{proposition}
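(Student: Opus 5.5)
We will apply the linearised stability principle for semilinear Cauchy problems with non-densely defined Hille--Yosida operators (as in \cite{magal2018}, see also \cite{DucrotMagal2008}). Once the spectrum of $(A+DF_{E_0})_0$ in $\{\Re\lambda\ge-\mu_0\}$ is shown to lie in $\{\Re\lambda<0\}$, $E_0$ is locally asymptotically stable. If some eigenvalue has positive real part, $E_0$ is unstable. By Lemma \ref{Lemma:spectra}, with $E=E_0$ this part of the spectrum consists only of finitely many isolated eigenvalues of finite algebraic multiplicity, and the essential growth bound is at most $-\mu_0<0$. It therefore suffices to derive a characteristic equation for $\lambda$ with $\Re\lambda>-\mu_0$ and to locate its roots.

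At $E_0$ we have $i_h^*=r_h^*=0$ and $I_v^*=0$, so the eigenvalue problem $(A+DF_{E_0})x=\lambda x$ with $x=(0,s_h,i_h,r_h,S_v,I_v)\in D(A)$ reads
\begin{equation*}
\begin{array}{rcl}
i_h'&=&-(\lambda+\mu_h+\delta+r_1)i_h+\beta_v s_h^0 I_v,\qquad i_h(0)=0,\\
(\lambda+\mu_v)I_v&=&S_v^0\int_0^\infty\beta_h(a)i_h(a)da,
\end{array}
\end{equation*}
together with equations for $s_h,r_h,S_v$ that are driven by $(i_h,I_v)$ and are uniquely solvable for $\Re\lambda>-\mu_0$. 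If $I_v=0$, then $i_h=0$, and afterwards $r_h=0$, $S_v=0$ and $s_h=0$, so no eigenvector exists. Hence $I_v\neq0$. Solving the $i_h$-equation by variation of constants and substituting into the $I_v$-equation, after the change of variables $a=\xi+s$, gives the characteristic equation
$$1=\Delta(\lambda):=\frac{S_v^0}{\lambda+\mu_v}\int_0^\infty\int_0^\infty\beta_h(\xi+s)\beta_v(\xi)s_h^0(\xi)e^{-\lambda s}e^{-\int_\xi^{\xi+s}(\mu_h+\delta+r_1)(u)du}\,d\xi\, ds .$$
Comparing with \eqref{Eq:R0} gives $\Delta(0)=\RR_0^2$.

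\emph{Case $\RR_0<1$.} Suppose $\Re\lambda\ge0$. Then $|\lambda+\mu_v|\ge\mu_v$ and $|e^{-\lambda s}|\le1$, so $|\Delta(\lambda)|\le\Delta(0)=\RR_0^2<1$, and $\Delta(\lambda)=1$ is impossible. Every eigenvalue in $\{\Re\lambda>-\mu_0\}$ therefore has negative real part. Since there are only finitely many of them, the growth bound of the linearised semigroup is negative, and local asymptotic stability follows.

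\emph{Case $\RR_0>1$.} Restrict $\Delta$ to real $\lambda\in(-\mu_0,\infty)$. There it is continuous and strictly decreasing; the decrease is strict because Assumption \ref{Assumption:Omega} makes the integrand positive on a set of positive measure. Moreover $\Delta(\lambda)\to0$ as $\lambda\to+\infty$, and $\Delta(0)=\RR_0^2>1$. By the intermediate value theorem there is a unique real $\lambda^*>0$ with $\Delta(\lambda^*)=1$. Building the corresponding eigenvector explicitly, by taking $I_v=1$ and then solving for $i_h,s_h,r_h,S_v$, shows that $\lambda^*$ is a positive eigenvalue, so $E_0$ is unstable.

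The main obstacle I expect is technical: justifying the linearised (in)stability principle in this non-densely defined setting. This requires checking that $F$ is $C^1$ near $E_0$ and that the semiflow is the one generated through integrated semigroups. For that step I would cite the corresponding theorem in \cite{magal2018} (Chapter 6), using Proposition \ref{prop1} and the quadratic form of the nonlinearity.
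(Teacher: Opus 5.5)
Your proposal is correct and follows the same route as the paper. You reduce to the point spectrum via Lemma \ref{Lemma:spectra}, derive a characteristic equation from exponential solutions, bound its modulus for $\Re\lambda\ge0$ when $\RR_0<1$, obtain a positive real root by the intermediate value theorem when $\RR_0>1$, and then invoke the linearised (in)stability results of \cite{magal2018}; note these are Propositions 5.7.3--5.7.4 in Section 5.7, not Chapter 6. Your $\Delta(\lambda)$, with the factor $1/(\lambda+\mu_v)$ and $\Delta(0)=\RR_0^2$, is in fact the accurate form of the paper's $g(\lambda)$: the paper drops the $\lambda S_v$ term in the vector equations, obtains the factor $1/\mu_v$, and writes $g(0)=\RR_0$, though these slips do not affect the conclusion.
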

\begin{proof} Using \cite[Proposition 5.7.3, p. 246]{magal2018}, we know that if $s(A+(DF_{E_0})_0)<0$ then $E_0$ is locally asymptotically stable, and similarly if $s(A+(DF_{E_0})_0)>0$ then $E_0$ is unstable 
by using \cite[Proposition 5.7.4, p. 247]{magal2018} , where $s(\cdot)$ refers to the spectral bound. Moreover, by Lemma \ref{Lemma:spectra}, we only need to study the punctual spectrum of $(A+DF_{E_0})_0$. We then consider exponential solutions, \textit{i.e.} of the form $u(t)=v e^{\lambda t}$, with $0\neq v:=(s_h, i_h, r_h, S_v, I_v)\in \X$ and $\lambda\in \C$. We obtain the following system:
\begin{equation*}
\left\{
\begin{array}{rcl}
s_h'(a)&=&-\beta_v(a)I_v s_h^0(a)-(\mu_h(a)+\lambda)s_h(a)+r_2(a)r_h(a) \\
i_h'(a)&=&\beta_v(a)I_v s_h^0(a)-(\mu_h(a)+\lambda+\delta(a)+r_1(a))i_h(a) \\
r_h'(a)&=&r_1 i_h(a)-(r_2(a)+\lambda+\mu_h(a))r_h(a) \\
0&=& S_v^0\int_0^\infty \beta_h(a)i_h(a)da+\mu_v S_v \\
0&=&(\mu_v+\lambda)(S_v+I_v)
\end{array}
\right.
\end{equation*}
leading to either $I_v=-S_v$ or $\lambda=-\mu_v<0$. Supposing now that $I_v=-S_v$, we get:
$$I_v=\dfrac{S^0_v \int_0^\infty \beta_h(a)i_h(a)da}{\mu_v}$$
and
$$i_h(a)=\left(\dfrac{S^0_v \int_0^\infty \beta_h(a)i_h(a)da}{\mu_v}\right)\int_0^a \beta_v(s)s^0_h(s)e^{-\int_s^a (\mu_h(z)+\delta(z)+r_1(z)+\lambda)dz}ds$$
for each $a\geq 0$. We then obtain:
$$1=\dfrac{S^0_v}{\mu_v}\int_0^\infty \beta_h(a)
\int_0^a \beta_v(s)s^0_h(s)e^{-\int_s^a (\mu_h(z)+\delta(z)+r_1(z)+\lambda)dz}ds da=:g(\lambda).$$
We see that $g(0)=\RR_0$ so, if $\RR_0>1$ then, by continuity, one may find $\lambda>0$ such that $g(\lambda)=1$ and $E_0$ is unstable. Now, let us suppose that $\RR_0<1$, then considering $\lambda=\Re(\lambda)+i  \Im(\lambda)$, we get
$$1=|g(\lambda)|\leq \dfrac{S^0_v}{\mu_v}\int_0^\infty \beta_h(a)
\int_0^a \beta_v(s)s^0_h(s)e^{-\int_s^a (\mu_h(z)+\delta(z)+r_1(z)+\Re(\lambda))dz}ds da\leq \RR_0<1$$
whenever $\Re(\lambda)\geq 0$. As a result, $E_0$ is locally asymptotically stable when $\RR_0<1$.
\end{proof}

\subsection{Attractiveness of the parasite-free equilibrium}

We start this section by reminding classical definitions about orbits and limit sets (see \cite{Hale88}). In the sequel, for $\widehat{x}\in \X$, we will denote by $\gamma^+(\widehat{x})=\{\Phi_t(\widehat{x}), \ t\geq 0\}$ the positive orbit starting from $\widehat{x}$ and
$$\omega(\widehat{x})=\cap_{\tau \geq 0}\overline{\{\Phi_t(\widehat{x}), \ t\geq \tau\}}$$
the $\omega$-limit set of $\widehat{x}$. Also, let a function $\phi:(-\infty,0]\to \X$ such that $\phi(0)=\widehat{x}$ and for any $s\leq 0$, $\Phi_t(\phi(s))=\phi(t+s)$ for each $0\leq t\leq -s$, then the set $\{\phi(s), s\leq 0\}$ is called a negative orbit through $\widehat{x}$, denoted by $\gamma^-(\widehat{x})$. Similarly, for a function $\phi:\R\to \X$ such that $\phi(0)=\widehat{x}$ and for each $s\in \R$, $\Phi_t(\phi(s))=\phi(t+s)$ for every $t\geq 0$, then the set $\{\phi(s), s\in \R\}$ is called a complete orbit through $\widehat{x}$, denoted by $\gamma(\widehat{x})$. Let
$$H(t,\widehat{x})=\{\widehat{y}\in \X: \text{ there is a negative orbit through } \widehat{x} \text{ defined by } \phi:(-\infty,0]\to \X \text{ with } \phi(0)=\widehat{x} \text{ and } \phi(-t)=\widehat{y}\}$$
then we define the $\alpha$-limit set of $\widehat{x}$ as
$$\alpha(\widehat{x})=\cap_{\tau \geq 0}\overline{\{H(t,\widehat{x}), \ t\geq \tau\}}.$$
For a given complete orbit $\gamma(z)=\{\phi(s),s\in \R\}$, we define the $\alpha$-limit set of the orbit similarly as
$$\alpha(\phi)=\alpha(\gamma(z))=\bigcap_{\tau\geq 0}\overline{\{\phi(-t), t\geq \tau\}}.$$
We finally define the set
$$\Gamma^-(\widehat{x})=\bigcup_{t\geq 0}H(t,\widehat{x})$$
that contains all negative orbits through $x_0$. Following \cite{perasso19}, we prove now the existence of a compact attractor by proving the relative compactness of the positive orbits. 

\begin{lemma}\label{Lemma:orbit_compact}
For every $\widehat{x}\in \X_+$, the positive orbit $\gamma^+(\widehat{x})\subset \X_+$ is relatively compact, \textit{i.e.} $\overline{\gamma^+(\widehat{x})}$ is compact.
\end{lemma}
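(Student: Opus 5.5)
We will use the decomposition $\Phi_t=(\Phi^h_t,\Phi^v_t)$ with $\Phi^h_t=\Phi^{h,1}_t+\Phi^{h,2}_t$ given in Theorem \ref{theorem_global}, and show that the positive orbit splits into a part that vanishes as $t\to\infty$ plus a part that stays in a compact set. Concretely, we will prove:
\begin{enumerate}
\item $\|\Phi^{h,1}_t(\widehat{x})\|_{L^1}\to 0$ as $t\to\infty$;
\item $\{\Phi^{h,2}_t(\widehat{x}),\ t\geq 0\}$ is relatively compact in $(L^1(\R_+))^3$;
\item $\{\Phi^v_t(\widehat{x}),\ t\geq 0\}$ is relatively compact in $\R^2$.
\end{enumerate}
Item 3 is immediate from \eqref{Eq:Estimates_sol3}, since this set is bounded in finite dimension. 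Given items 1--3, relative compactness of $\gamma^+(\widehat{x})$ follows by a standard argument. Let $(t_n)$ be any sequence. If $(t_n)$ is bounded, extract a convergent subsequence of times and use continuity of $t\mapsto\Phi_t(\widehat{x})$. If $t_n\to\infty$, extract a subsequence along which $\Phi^{h,2}_{t_n}$ and $\Phi^v_{t_n}$ converge; then $\Phi_{t_n}(\widehat{x})$ converges too, because $\Phi^{h,1}_{t_n}\to0$. Equivalently, one may appeal to the $\alpha$-contraction argument of \cite{perasso19}.

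\textbf{Step 1.} For the first block of $\Phi^{h,1}_t$ in \eqref{Eq:Semiflow_h1}, each component is bounded by the initial datum shifted by $t$ times $e^{-\mu_0 t}$. Its $L^1$ norm is therefore at most $e^{-\mu_0 t}\|(s_{h,0},i_{h,0},r_{h,0})\|_{L^1}$. For the integral block supported on $\{a>t\}$, we will bound the total mass of ages $a>t$ using \eqref{Eq:Estimates_sol1}. Summing the three components, the whole of $\Phi^{h,1}_t$ is dominated by $n_h(t,\cdot)\mathbf{1}_{\{a>t\}}$, and
\[
\|n_h(t,\cdot)\mathbf{1}_{\{a>t\}}\|_{L^1}\leq e^{-\mu_0 t}\|n_{h,0}\|_{L^1}\to 0 .
\]
In fact the integral terms on $\{a>t\}$ are bounded by the same quantity, so the decomposition stands.

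\textbf{Step 2.} This is the main step. We apply the Riesz--Fréchet--Kolmogorov criterion, as in the proof of Theorem \ref{Thm:R0}, to the three components of $\Phi^{h,2}_t$, which are supported on $[0,t]$.

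Uniform boundedness and uniform tail decay come from \eqref{Eq:Estimates_sol1}: each component is pointwise bounded by $\Lambda_h e^{-\mu_0 a}$, so $\int_r^\infty\leq \Lambda_h e^{-\mu_0 r}/\mu_0$ uniformly in $t$.

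The obstacle is equicontinuity of translations, namely
\[
\sup_{t}\|\tau_h\Phi^{h,2}_t-\Phi^{h,2}_t\|_{L^1}\to0 \quad\text{as } h\to0 .
\]
To get it, we will show that each component is Lipschitz in $a$ on $(0,t)$ with a constant independent of $t$, up to a boundary contribution. Each component has the form $\int_0^a g(t-a+s,s)e^{-\int_s^a k(t-a+\xi,\xi)\,d\xi}ds$, possibly plus $\Lambda_h e^{-\int_0^a(\dots)}$, where the source $g$ is bounded by $C$ thanks to \eqref{Eq:Estimates_sol1} and \eqref{Eq:Estimates_sol3}: $r_2 r_h$, $\beta_v I_v s_h$ and $r_1 i_h$ are all bounded by $\|\cdot\|_\infty\Lambda_h\max\{1,\Lambda_v/\mu_v,N_v(0)\}$.

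The difficulty is that $I_v(t-a+\xi)$ and the traced values $r_h(t-a+s,s)$ depend on $a$ through the characteristic. Our first attempt is to differentiate along characteristics instead of in $a$ at fixed $t$. Using the mild formulation, $\partial_a$ of each component, taken in the distributional sense, equals
\[
g(t,a)-k(\cdot)\times(\text{component}) .
\]
Here $g(t,a)$ and $k$ are bounded, while the time derivative is controlled by the equations themselves: $|I_v'|\leq \Lambda_v+C$ and $I_v$ is Lipschitz. Hence each component lies in $W^{1,\infty}(0,t)$ with
\[
\|\partial_a\cdot\|_{L^\infty}\leq C',
\]
and also $\|\partial_a\cdot\|_{L^1(0,t)}\leq C'/\mu_0$ after using the exponential factor. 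The jump at $a=t$ is bounded by $\Lambda_h e^{-\mu_0 t}$.

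If this route proves delicate, the fallback is to estimate $\|\tau_h f-f\|_{L^1}$ directly. We split the exponentials, use $|e^{-x}-e^{-y}|\leq|x-y|$, bound the integrands by $C e^{-\mu_0(a-s)}$, and control shifts of the bounded sources only through the factor $h$ and the exponential weight. This approach avoids any regularity of $r_h$ and $i_h$ along $a$.

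Either way we obtain
\[
\|\tau_h\Phi^{h,2}_t-\Phi^{h,2}_t\|_{L^1}\leq C''h+2\Lambda_h h ,
\]
uniformly in $t$, and the RFK criterion then yields item 2, completing the proof.
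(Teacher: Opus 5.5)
Your architecture is sound. Step 1 is correct, and even sharper than the paper's bound, since the three components of $\Phi^{h,1}_t$ are nonnegative and sum to $n_h(t,\cdot)\mathbf{1}_{\{a>t\}}$. Item 3 is trivial, and the sequential argument works once item 2 is known.

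The gap is item 2, which carries the whole difficulty. You need $\sup_{t\geq 0}\|\tau_h\Phi^{h,2}_t-\Phi^{h,2}_t\|_{L^1}\to 0$, that is, constants independent of $t$, and neither of your routes proves it.

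In the first route, the identity ``$\partial_a(\text{component})=g(t,a)-k\times(\text{component})$'' is the derivative along characteristics, $\partial_t+\partial_a$, not $\partial_a$ at fixed $t$. The two differ by $-\partial_t$ of the component. Differentiating the Duhamel formula in $a$ produces $\partial_t$ of the traced densities $r_h(\cdot,s)$, $s_h(\cdot,s)$, $i_h(\cdot,s)$ inside the sources, not only $I_v'$. These are not ``controlled by the equations themselves''. You correctly flagged the traced values as the difficulty, but then only handled $I_v$.

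The fallback fails for the same reason. A time shift of a merely bounded source, $g(\tau-h,s)-g(\tau,s)$, is not $O(h)$. Estimating it recursively gives a Gronwall inequality with rate of order $\|r_2\|_{L^\infty}+\|\beta_v\|_{L^\infty}C+\|r_1\|_{L^\infty}$, which can exceed $\mu_0$, so the bound grows like $e^{\kappa t}$. This is exactly what happens in the paper's estimate $\kappa_3(\xi)e^{\kappa_4 s}$. It is why the paper only proves compactness of $\Phi^{h,2}_t$ on bounded sets of initial data at a \emph{fixed} $t$, and then invokes Webb's Proposition 3.13.

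Your claimed bound is nevertheless true. The missing idea is the dissipative, mass-conserving structure of the human subsystem:
\begin{itemize}
\item For $a<t$, compare the solutions along the characteristics issued from $(t_0,0)$ and $(t_0+\xi,0)$. Both start at $(\Lambda_h,0,0)$ and see the same age-dependent coefficients.
\item Their difference is therefore forced only by $\pm\beta_v(\sigma)\bigl(I_v(t_0+\xi+\sigma)-I_v(t_0+\sigma)\bigr)s_h$. This is of size at most $C\xi e^{-\mu_0\sigma}$, since $I_v$ is uniformly Lipschitz along the orbit.
\item The transfer terms $\beta_v I_v s_h$, $r_1 i_h$, $r_2 r_h$ cancel when the absolute values of the three differences are summed. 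So this $\ell^1$ quantity $D$ satisfies $D'(\sigma)\leq -\mu_0 D(\sigma)+C\xi e^{-\mu_0\sigma}$ with $D(0)=0$.
\item Hence $|(s_h,i_h,r_h)(t+\xi,a)-(s_h,i_h,r_h)(t,a)|\leq C\xi\,a e^{-\mu_0 a}$, uniformly in $t$.
\item Since $(t,a+h)$ and $(t-h,a)$ lie on the same characteristic, combining this with the bounded characteristic derivative gives your estimate $C''h+2\Lambda_h h$.
\end{itemize}

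Alternatively, follow the paper's route and let constants depend on $T$. Prove compactness of $\Phi^{h,2}_T$ on bounded sets for one fixed $T$. Then use $\Phi_t=\Phi_T\circ\Phi_{t-T}$, the boundedness of the orbit, and the uniform decay of $\Phi^{h,1}_T$ on bounded sets.
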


\begin{proof}
Firstly, by means of \eqref{Eq:Semiflow_h2} and estimates \eqref{Eq:Estimates_sol1}-\eqref{Eq:Estimates_sol2}, we see that for each $r>0$ and each $\widehat{x}\in B_{\X}(0,r)\cap \X_+$, we have:
$$\|\Phi_t^{h,1}(\widehat{x})\|_{(L^1(\R_+))^3}\leq r e^{-\mu_0 t}+\left(\|r_2\|_{L^\infty}+\|\beta_v\|_{L^\infty}\frac{\Lambda_v}{\mu_0}+\|r_1\|_{L^\infty}\right)r t e^{-\mu_0 t}$$
where $B_{\X}(0,r)$ in defined by \eqref{Eq:ball_r}. Secondly, due to Theorem \ref{theorem_global}, we see that $\Phi_t^v$ maps bounded sets of $\X_+$ into sets with compact closure in $\X_+$ for every $t>0$. Thus, we only need to prove the compactness for $\Phi_t^{h,2}$ for some $t>0$.

Let $t>0$ and let $S\subset \X_+$ be a bounded subset with $r=\sup_{\widehat{x}\in S}\|\widehat{x}\|_{\X}$. The goal is to show the relative compactness of $\{\Phi_t^{h,2}(\widehat{x}), \widehat{x}\in S\}\subset (L^1_+(\R_+))^3$. Defining $C_{r}=r+\frac{\Lambda_v}{\mu_v}+\frac{\Lambda_h}{\mu_0}$, we readily see from \eqref{Eq:Estimates_sol2}-\eqref{Eq:Estimates_sol3} that
\begin{equation}
\label{Eq:Phiv_estim}
\Phi_s^{S_v}(\widehat{x})\leq C_r, \quad \Phi_s^{I_v}(\widehat{x})\leq C_r, \quad \|\Phi_{s}^{s_h}(\widehat{x})\|_{L^1}\leq C_r \quad \|\Phi_{s}^{i_h}(\widehat{x})\|_{L^1}\leq C_r \quad \|\Phi_{s}^{r_h}(\widehat{x})\|_{L^1}\leq C_r \quad \forall (s,\widehat{x})\in \R_+\times S
\end{equation}
and from \eqref{Eq:Estimates_sol1} we arrive at
$$\sup_{\widehat{x}\in S}\|\Phi_t^{h,2}(\widehat{x})\|_{(L^1(\R_+))^3}\leq \dfrac{\Lambda_h}{\mu_0}+\dfrac{\Lambda_h}{\mu_0^2}\left(\|r_2\|_{L^\infty}+\|\beta_v\|_{L^\infty}C_r+\|r_1\|_{L^\infty}\right)<+\infty.$$
Let $\xi>0$. We see that:
$$\sup_{\widehat{x}\in S}\sum_{j\in\{s,i,r\}}\int_{\xi}^\infty |\Phi_t^{j_h,2}(\widehat{x})(a)|da\leq \Lambda_h \int_{\xi}^\infty e^{-\mu_0 a}da+\Lambda_h \left(\|r_2\|_{L^\infty}+\|\beta_v\|_{L^\infty}C_r+\|r_1\|_{L^\infty}\right)\int_{\xi}^\infty ae^{-\mu_0 a}da\underset{\xi\to +\infty}{\longrightarrow} 0.$$
Let $s\in[0,t]$, $\xi>0$ and $\widehat{x}\in S$. Then 
\begin{flalign}
\||\Phi_{s+\xi}^{r_h,2}(\widehat{x})-\Phi_{s}^{r_h,2}(\widehat{x})\|_{L^1}&\leq \Lambda_h \xi t\|r_1\|_{L^\infty}+\|r_1\|_{L^\infty}\int_{0}^s \int_0^a |\Phi^{i_h,2}_{s+\xi-a+w}(\widehat{x})(w)-\Phi^{i_h,2}_{s-a+w}(\widehat{x})(w)|dw da \nonumber \\
&\leq \Lambda_h \xi t\|r_1\|_{L^\infty}+\|r_1\|_{L^\infty} \int_0^s \|\Phi^{i_h,2}_{u+\xi}(\widehat{x})-\Phi^{i_h,2}_{u}(\widehat{x})\|_{L^1}du. \label{Eq:Phi_rh}
\end{flalign}
From \eqref{system}-\eqref{Eq:Phiv_estim} it comes
$$\left|\dfrac{d\Phi_{s}^{I_v}
(\widehat{x})}{ds}\right|\leq \|\beta_h\|_{L^\infty}(C_r)^2+\mu_v C_r=:\tilde{C}_r, \quad \forall (s,\widehat{x})\in \R_+\times S$$
whence
\begin{equation}
\label{Eq:PhiIv_estim}
|\Phi_{s+\xi}^{I_v}(\widehat{x})-\Phi_s^{I_v}(\widehat{x})|\leq \xi \tilde{C}_r, \quad \forall (s,\widehat{x},\xi)\in\R_+\times S\times \R_+^*.
\end{equation}
Let $s\in[0,t], \xi>0$ and $\widehat{x}\in S$, then
\begin{equation}
\label{Eq:Phi_ih}
\||\Phi_{s+\xi}^{i_h,2}(\widehat{x})-\Phi_{s}^{i_h,2}(\widehat{x})\|_{L^1}\leq \Lambda_h \xi t\|\beta_v\|_{L^\infty} C_r+\xi t\|\beta_v\|_{L^\infty}\tilde{C}_r C_r+\|\beta_v\|C_r\int_0^s \|\Phi^{s_h,2}_{u+\xi}(\widehat{x})-\Phi^{s_h,2}_{u}(\widehat{x})\|_{L^1}du
\end{equation}
and using \eqref{Eq:PhiIv_estim} we see on one hand that
\begin{flalign*}
&\Lambda_h\int_0^s |e^{-\int_0^a (\mu_h(w)+\beta_v(w)\Phi^{I_v}_{s+\xi-a+w}(\widehat{x}))dw}-e^{-\int_0^a (\mu_h(w)+\beta_v(w)\Phi^{I_v}_{s-a+w}(\widehat{x}))dw}|da \\
&\leq \Lambda_h t \times \max\left\{e^{\|\beta_v\|_{L^\infty}t\xi \tilde{C}_r}-1,1-e^{-\|\beta_v\|_{L^\infty}t\xi \tilde{C}_r}\right\}=:\kappa_{1}(\xi)
\end{flalign*}
and on the other hand that
\begin{flalign*}
& \int_0^s \left|\int_0^a r_2(w)\Phi^{r_h,2}_{s+\xi-a+w}(\widehat{x})(w)\left(e^{-\int_w^a (\mu_h(l)+\beta_v(l)\Phi^{I_v}_{s+\xi-a+l}(\widehat{x})dl}dw-e^{-\int_w^a (\mu_h(l)+\beta_v(l)\Phi^{I_v}_{s-a+l}(\widehat{x})dl}\right)dw\right|da \\
&\leq \Lambda_h t^2 \|r_2\|_{L^\infty} \times \max\left\{e^{\|\beta_v\|_{L^\infty}t \xi \tilde{C}_r}-1,1-e^{-\|\beta_v\|_{L^\infty}t \xi \tilde{C}_r}\right\}=:\kappa_2(\xi)
\end{flalign*}
whence
\begin{equation}
\label{Eq:Phi_sh}
\|\Phi_{s+\xi}^{s_h,2}(\widehat{x})-\Phi_{s}^{s_h,2}(\widehat{x})\|_{L^1}\leq \Lambda_h \xi+\Lambda_h \xi t \|r_2\|_{L^\infty}+\kappa_1(\xi)+\kappa_2(\xi)+\|r_2\|_{L^\infty}\int_0^s \|\Phi^{r_h,2}_{u+\xi}(\widehat{x})-\Phi^{r_h,2}_{u}(\widehat{x})\|_{L^1}du.
\end{equation}
We deduce from \eqref{Eq:Phi_rh}-\eqref{Eq:Phi_ih}-\eqref{Eq:Phi_sh} that for every $s\in[0,t], \xi>0$, $\widehat{x}\in S$ and each $j\in\{s,i,r\}$, we have
$$\|\Phi_{s+\xi}^{j_h,2}(\widehat{x})-\Phi_{s}^{j_h,2}(\widehat{x})\|_{L^1}\leq \kappa_3(\xi)+\kappa_4\int_0^s\|\Phi_{u+\xi}^{j_h,2}(\widehat{x})-\Phi_{u}^{j_h,2}(\widehat{x})\|_{L^1}du$$
where $\kappa_3(\xi)>0$ and $\kappa_4>0$ are independent of $\widehat{x}$ and $s$, with $\kappa_3(\xi)\underset{\xi \to 0}{\longrightarrow} 0$. Using Gronwall's inequality lead to
\begin{equation}
\label{Eq:Phi_jh}
\|\Phi_{s+\xi}^{j_h,2}(\widehat{x})-\Phi_{s}^{j_h,2}(\widehat{x})\|_{L^1}\leq \kappa_3(\xi)e^{\kappa_4 s} \quad \forall (s,\xi,\widehat{x},j)\in[0,t]\times \R_+^*\times S\times \{s,i,r\}.
\end{equation}
Now, let $\xi>0$ and $\widehat{x}\in S$, then from \eqref{Eq:Phi_jh} we get
\begin{flalign*}
\|\Phi_t^{r_h,2}(\widehat{x})(\cdot+\xi)-\Phi_t^{r_h,2}(\widehat{x})\|_{L^1}&\leq 2\Lambda_h \xi t \|r_1\|_{L^\infty}+\Lambda_h t^2 \|r_1\|_{L^\infty}(1-e^{-\xi (\|r_2\|_{L^\infty}+\|\mu_h\|_{L^\infty})})+\|r_1\|_{L^\infty}\int_0^t \|\Phi^{i_h,2}_{u+\xi}(\widehat{x})-\Phi^{i_h,2}_{u}(\widehat{x})\|_{L^1}du \\
&\leq 2\Lambda_h \xi t \|r_1\|_{L^\infty}+\Lambda_h t^2 \|r_1\|_{L^\infty}(1-e^{-\xi (\|r_2\|_{L^\infty}+\|\mu_h\|_{L^\infty})})+t\|r_1\|_{L^\infty}\kappa_3(\xi)e^{\kappa_4 t}\underset{\xi\to 0}{\longrightarrow} 0
\end{flalign*}
uniformly in $\widehat{x}\in S$. We deduce that $\Phi^{r_h,2}_t$ is a compact operator by using the RFK criterion. Similar computations show that $\Phi^{i_h,2}_t$ and $\Phi^{s_h,2}_t$ are compact operators.

Finally, the relative compactness of the positive orbits follows from \cite[Proposition 3.13, p. 100]{Webb85}.
\end{proof}

The latter compactness result of the positive orbits then leads to the existence of a compact attractor in the following sense (see \textit{e.g.} \cite[Lemma 3.1.1 and 3.1.2, p. 36]{Hale88}, or \cite[Theorem 4.1, p. 167]{Walker80}). Let us remind the definition of the semi-distance \cite{Hale88} as $d(B_1,B_2)=\sup_{y_1\in B_1}\inf_{y_2\in B_2}\|y_1-y_2\|_{\X}$ for any subset $B_1, B_2 \subset \X$.

\begin{lemma}\label{Lemma:OmegaLimit}
For every $\widehat{x}\in \X_+$:
\begin{enumerate}
\item $\omega(\widehat{x})$ is non-empty, compact and connected;
\item $\omega(\widehat{x})$ is invariant under $\Phi$, \textit{i.e.} $\Phi_t(\omega(\widehat{x}))=\omega(\widehat{x}), \forall t\geq 0$;
\item $\omega(\widehat{x})$ is an attractor, \textit{i.e.} $\lim_{t\to +\infty} d(\Phi_t(\widehat{x}), \omega(\widehat{x}))=0$. 
\end{enumerate}
\end{lemma}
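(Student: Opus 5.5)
The plan is to derive all three properties from Lemma \ref{Lemma:orbit_compact}, which gives the relative compactness of $\gamma^+(\widehat{x})$, together with the continuity of the semiflow from Theorem \ref{theorem_global}. This is the classical argument of \cite[Lemma 3.1.1 and 3.1.2, p. 36]{Hale88}, which we reproduce in our setting. Fix $\widehat{x}\in\X_+$ and write $K_\tau=\overline{\{\Phi_t(\widehat{x}),\ t\geq\tau\}}$. Each $K_\tau$ is a closed subset of the compact set $\overline{\gamma^+(\widehat{x})}$, hence compact. The sets $K_\tau$ are nested and non-empty. We will also use the sequential characterization: $y\in\omega(\widehat{x})$ if and only if there exists $t_n\to+\infty$ with $\Phi_{t_n}(\widehat{x})\to y$.

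For item 1, $\omega(\widehat{x})=\bigcap_{\tau\geq0}K_\tau$ is a decreasing intersection of non-empty compact sets, so it is non-empty and compact by the finite intersection property. For connectedness, we also note that each $K_\tau$ is connected, being the closure of the continuous image of $[\tau,\infty)$. Suppose, for contradiction, that $\omega(\widehat{x})=A_1\cup A_2$ with $A_1,A_2$ disjoint, non-empty and compact, at distance $2\ep>0$. Then the orbit must enter the $\ep$-neighbourhoods of both $A_1$ and $A_2$ for arbitrarily large times. By continuity of $t\mapsto\Phi_t(\widehat{x})$, it must therefore visit the set $\{y:\ d(y,A_1)=\ep\}$ along some sequence $t_n\to\infty$. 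By compactness, a subsequence converges to a point of $\omega(\widehat{x})$ lying at distance $\ep$ from $A_1$, hence in neither $A_1$ nor $A_2$. This is the desired contradiction.

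For item 2, we take $y\in\omega(\widehat{x})$ with $\Phi_{t_n}(\widehat{x})\to y$. Continuity of $\Phi_t$ in the state variable for fixed $t$ gives $\Phi_{t+t_n}(\widehat{x})\to\Phi_t(y)$, so $\Phi_t(\omega(\widehat{x}))\subset\omega(\widehat{x})$. For the reverse inclusion, we use compactness: the sequence $\Phi_{t_n-t}(\widehat{x})$ (for $t_n>t$) has a subsequence converging to some $z\in\omega(\widehat{x})$. Continuity then yields $\Phi_t(z)=y$. The main technical point here is to confirm that the semiflow of Theorem \ref{theorem_global} is continuous with respect to initial data, uniformly on bounded time intervals. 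This follows from the Lipschitz estimate of Proposition \ref{prop1} on bounded sets combined with Gronwall's lemma, using the a priori bounds \eqref{Eq:Estimates_sol2}--\eqref{Eq:Estimates_sol3}.

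For item 3, we argue by contradiction. If the convergence failed, there would be $\ep>0$ and $t_n\to\infty$ with $d(\Phi_{t_n}(\widehat{x}),\omega(\widehat{x}))\geq\ep$. Relative compactness gives a convergent subsequence, whose limit belongs to $\omega(\widehat{x})$ by the sequential characterization. This contradicts the distance bound. Overall, the only genuinely problem-specific ingredient is the compactness already established in Lemma \ref{Lemma:orbit_compact}; the rest is standard topological dynamics.
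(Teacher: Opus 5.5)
Your proposal is correct and takes the same route as the paper. The paper gives no explicit argument: it deduces the lemma from the relative compactness in Lemma~\ref{Lemma:orbit_compact} by citing \cite[Lemma 3.1.1 and 3.1.2, p. 36]{Hale88} (or \cite[Theorem 4.1, p. 167]{Walker80}), and you have written out those classical arguments, whose only extra ingredient is continuity of the semiflow in the initial data. That continuity is what the paper proves in Lemma~\ref{Lemma:state-continuous}, where the Gronwall argument has to go through the integrated-semigroup variation-of-constants formula because $A$ is not densely defined.
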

In order to get the attractiveness of the parasite-free equilibrium, we will make use of Lyapunov functionals, introduced in \cite{magal2010} for age-structured models (see also \cite{perasso18,Richard20} for the computations in other age-structured models and see \cite{Vargas2009} for other computations in compartmental (SIRS) epidemiological ODEs model). To this end we define the following non-negative function
$$g:\R_+^*\ni x \longmapsto x-\ln(x)-1\in \R_+$$
and the functional
$$L_0:(s_h,i_h,r_h,S_v,I_v)\longmapsto \int_0^\infty \psi(a)i_h(a)da+g\left(\dfrac{S_v}{S_v^0}\right)+\dfrac{I_v}{S_v^0}$$
where
$$\psi(a)=\int_a^\infty \beta_h(s)e^{-\int_a^s (\mu_h(z)+\delta(z)+r_1(z))dz}ds.$$
We also remind the following definition:
\begin{definition}
Let $S\subset \X$. A function $L:\X\to \R$ is called a Lyapunov function on $S$ if there hold that:
\begin{itemize}
\item $L$ is continuous on $\overline{S}$ (the closure of $S$ in $\X$);
\item the function $\R_+\ni t\longmapsto L(\Phi_t(\widehat{x}))$ is non-increasing for every $\widehat{x}\in S$.
\end{itemize}
\end{definition}
We will now show that $L_0$ is a Lyapunov functional.
\begin{proposition}\label{Prop:L0_def}
For every $\widehat{x}\in \X_+$, the function $(t,\widehat{v})\longmapsto L_0(\Phi_t(\widehat{v}))$ is well-defined on $\R_+\times \omega(\widehat{x})$.
\end{proposition}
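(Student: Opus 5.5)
The plan is to check separately that each of the three terms of $L_0$ is finite along every trajectory starting in $\omega(\widehat{x})$. The term $\int_0^\infty \psi(a)i_h(a)\,da$ and the term $I_v/S_v^0$ cause no difficulty. The only real issue is the term $g\left(S_v/S_v^0\right)$: since $g(x)=x-\ln(x)-1$ blows up as $x\to 0^+$, we need $S_v>0$ at every point $\Phi_t(\widehat{v})$ with $\widehat{v}\in\omega(\widehat{x})$ and $t\geq 0$.

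First, the linear terms. By Assumption \ref{Assum:general}, for every $a\geq 0$,
$$0\leq \psi(a)\leq \|\beta_h\|_{L^\infty}\int_a^\infty e^{-\mu_0(s-a)}ds=\frac{\|\beta_h\|_{L^\infty}}{\mu_0}.$$
Hence $\psi\in L^\infty_+(0,\infty)$. For any $(s_h,i_h,r_h,S_v,I_v)\in\X_+$ this gives $\left|\int_0^\infty\psi(a)i_h(a)da\right|\leq \|\beta_h\|_{L^\infty}\mu_0^{-1}\|i_h\|_{L^1}<\infty$. The term $I_v/S_v^0$ is trivially finite because $S_v^0=\Lambda_v/\mu_v>0$.

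Second, and this is the main step, I will show that $S_v$ is bounded away from zero uniformly on $\omega(\widehat{x})$. Fix $\widehat{x}\in\X_+$ and let $C_h=C_h(\widehat{x})$ be the constant from \eqref{Eq:Estimates_sol2}. Set $\eta:=\Lambda_v/(\mu_v+\|\beta_h\|_{L^\infty}C_h)>0$.
\begin{itemize}
\item By \eqref{Eq:Estimates_pos}, using $1-e^{-ct}\geq 0$ and $S_{v,0}\geq 0$, we get $\Phi^{S_v}_t(\widehat{x})\geq \eta\left(1-e^{-(\mu_v+\|\beta_h\|_{L^\infty}C_h)t}\right)$. Hence $\liminf_{t\to\infty}\Phi^{S_v}_t(\widehat{x})\geq\eta$.
\item Every $\widehat{v}\in\omega(\widehat{x})$ is a limit $\widehat{v}=\lim_n\Phi_{t_n}(\widehat{x})$ with $t_n\to+\infty$ in $\X$. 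Convergence in $\X$ implies convergence of the $S_v$-component, so $\widehat{v}$ has $S_v$-component at least $\eta$.
\item By the invariance in Lemma \ref{Lemma:OmegaLimit}, $\Phi_t(\widehat{v})\in\omega(\widehat{x})$ for all $t\geq 0$. Therefore $\Phi^{S_v}_t(\widehat{v})\geq\eta>0$ for all $(t,\widehat{v})\in\R_+\times\omega(\widehat{x})$.
\end{itemize}
Consequently $S_v/S_v^0\in[\eta/S_v^0,+\infty)$, and $g(S_v/S_v^0)$ is finite there. The same limiting argument, combined with \eqref{Eq:Estimates_sol2}--\eqref{Eq:Estimates_sol3}, also gives uniform bounds $S_v\leq \max\{\Lambda_v/\mu_v, N_v(0)\}$ and $\|i_h\|_{L^1}\leq C_h$ on $\omega(\widehat{x})$. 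So $L_0$ is in fact bounded on $\R_+\times\omega(\widehat{x})$, which will also be useful for the continuity requirement in the definition of a Lyapunov function.

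The delicate point is the second step. Lower bound \eqref{Eq:Estimates_pos} degenerates at $t=0$ when $S_{v,0}=0$, so $L_0$ need not be finite on the whole orbit of $\widehat{x}$. It becomes finite only after passing to the $\omega$-limit set. That is exactly why the proposition is stated on $\omega(\widehat{x})$, and invariance is essential to transfer the asymptotic bound to all forward times from $\widehat{v}$.
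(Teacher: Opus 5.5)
Your proof is correct and is essentially the paper's argument. The only nontrivial point is the positivity of $S_v$, which you obtain, as the paper does, from the $\liminf$ lower bound derived from \eqref{Eq:Estimates_pos} and then transfer to $\Phi_t(\widehat{v})$ for $\widehat{v}\in\omega(\widehat{x})$; you also spell out the invariance step and the bound $\psi\leq\|\beta_h\|_{L^\infty}/\mu_0$, which the paper leaves implicit.

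One small correction to your closing remark: \eqref{Eq:Estimates_pos} already gives $S_v(t)>0$ for every $t>0$, so $L_0$ is finite along any orbit for $t>0$, as the paper's subsequent remark notes, and not only after passing to $\omega(\widehat{x})$. What restricting to $\omega(\widehat{x})$ actually buys is the uniform lower bound on $S_v$ and, later, the estimate $s_h\leq s_h^0$ needed for the sign of $\frac{d}{dt}L_0$.
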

\begin{proof}
Let $\widehat{x}\in \X_+$, $\widehat{v}\in \omega(\widehat{x})$ and $t\geq 0$. It is clear that $L_0(\Phi_t(\widehat{v}))$ is well-defined if $\Phi_t^{S_v}(\widehat{v})>0$ which is true since we have
\begin{equation}\label{eq:minoration_sv}
\liminf_{t\to \infty}\Phi_t^{S_v}(\widehat{x})\geq \dfrac{\Lambda_v}{\mu_v+\|\beta_h\|_{L^\infty}C_h}>0
\end{equation}
with $C_h$ given by \eqref{Eq:Estimates_sol2}. Finally, since $\widehat{v}\in\omega(\widehat{x})$ it follows that $\Phi_t^{S_v}(\widehat{v})\geq\frac{\Lambda_v}{\mu_v+\|\beta_h\|_{L^\infty}C_h}>0$ for each $t\geq 0$.
\end{proof}
\begin{remark}
Note that the latter proposition is stated on the $\omega$-limit set while the function $(t,\widehat{v})\longmapsto L_0(\Phi_t(\widehat{v}))$ is well-defined on $\R_+^*\times \X_+$. However, to show that this is a Lyapunov function, we will use the upper bound of the solution formulated in Theorem \ref{theorem_global} that is reached when $t\to +\infty$. This will allow to get an upper bound according to $\RR_0-1$ which is non-positive when $\RR_0\leq 1$.
\end{remark}

We start with a property of the semiflow.

\begin{lemma}\label{Lemma:state-continuous}
The semiflow $\Phi$ is state-continuous uniformly in finite time meaning that for every $(t,\widehat{x})\in \R_+\times \X_+$ and every $\ep>0$, there exists some $\delta>0$ such that
$$ \forall \widehat{y}\in\X_+, \quad  \|\widehat{x}-\widehat{y}\|_{\X}\leq \delta \Longrightarrow \|\Phi_s(\widehat{x})-\Phi_s(\widehat{y})\|_{\X}\leq \ep \quad \forall s\in[0,t].$$
\end{lemma}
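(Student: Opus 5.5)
The plan is a Gronwall estimate on the difference of two solutions, using that both stay in a common bounded set on $[0,t]$ and that the nonlinearity $F$ is Lipschitz on bounded sets (Proposition \ref{prop1}). Fix $(t,\widehat{x})\in\R_+\times\X_+$ and $\ep>0$, and restrict to $\widehat{y}\in\X_+$ with $\|\widehat{x}-\widehat{y}\|_{\X}\leq 1$.

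\textbf{Step 1: uniform bound.} By Theorem \ref{theorem_global}, estimates \eqref{Eq:Estimates_sol2}--\eqref{Eq:Estimates_sol3} give for all $s\geq 0$
$$\|\Phi_s(\widehat{x})\|_{\X}+\|\Phi_s(\widehat{y})\|_{\X}\leq 2\left(\|\widehat{x}\|_{\X}+1+\frac{\Lambda_h}{\mu_0}+\frac{\Lambda_v}{\mu_v}\right)=:m.$$
Thus both trajectories remain in $B_{X_0}(0,m)\cap X_{0+}$ for all $s\geq 0$, and Proposition \ref{prop1} supplies a Lipschitz constant $c_m$ for $F$ on this ball.

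\textbf{Step 2: Gronwall estimate.} Write $x(s),y(s)$ for the corresponding integrated solutions in $X_0$. By \cite{magal2018}, they satisfy the variation of constants formula
$$x(s)=T_{A_0}(s)x(0)+\frac{d}{ds}(S_A\ast F(x(\cdot)))(s),$$
where $T_{A_0}$ is the $C_0$-semigroup generated by the part of $A$ in $X_0$, and $S_A$ is the integrated semigroup. Proposition \ref{theorem_hille} gives the Hille--Yosida bound $\|(\lambda-A)^{-n}\|\leq (\lambda+\mu_0)^{-n}$, so the standard estimate reads
$$\left\|\frac{d}{ds}(S_A\ast f)(s)\right\|\leq \int_0^s e^{-\mu_0(s-\sigma)}\|f(\sigma)\|_X\,d\sigma .$$
Applying this to the difference of the two solutions yields
$$\|x(s)-y(s)\|\leq \|\widehat{x}-\widehat{y}\|_{\X}+c_m\int_0^s\|x(\sigma)-y(\sigma)\|\,d\sigma .$$
Gronwall's lemma then gives
$$\|\Phi_s(\widehat{x})-\Phi_s(\widehat{y})\|_{\X}\leq e^{c_m t}\|\widehat{x}-\widehat{y}\|_{\X}, \qquad s\in[0,t].$$
It suffices to choose $\delta=\min\{1,\ep e^{-c_m t}\}$.

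\textbf{Alternative for Step 2.} The same estimate can be obtained concretely via the Volterra formulas \eqref{Eq:Semiflow_h1}--\eqref{Eq:Semiflow_h2}, integrating along characteristics component by component. For the $s_h$ equation, the exponential factors $e^{-\int\beta_v I_v}$ differ by at most $\|\beta_v\|_{L^\infty}\int|I_v^x-I_v^y|$, via $|e^{-a}-e^{-b}|\leq|a-b|$ for $a,b\geq0$. The vector components are handled by direct ODE comparison. Summing in $L^1$ gives the same Gronwall inequality.

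\textbf{Main obstacle.} The main point to check is the Lipschitz estimate for the mild solution in the non-densely defined setting, namely that the convolution term is controlled by the $L^1$ norm of the forcing with constant $e^{-\mu_0 s}\leq 1$. This follows from Proposition \ref{theorem_hille} and \cite[Theorem 5.2.7]{magal2018}. If one prefers to avoid that machinery, the characteristic computation above handles it directly.
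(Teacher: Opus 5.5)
Your proof is correct and follows the paper's argument essentially step for step. Both use a uniform bound from Theorem \ref{theorem_global} that keeps the two trajectories in a ball where $F$ is $c_m$-Lipschitz, then the variation-of-constants formula with $\|T_{A_0}(s)\|\leq e^{-\mu_0 s}$ and the Kellermann--Hieber estimate for $\frac{d}{ds}(S_A\ast f)$, and finally Gronwall's lemma. Your choice $\delta=\min\{1,\ep e^{-c_m t}\}$ matches the bound $e^{c_m s}\delta$ that Gronwall actually gives (the paper's ``$c_m e^{s}\delta$'' is a slip), and your radius $m$ safely bounds both trajectories.
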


\begin{proof}
Let $x\in X_{0+}$. Then $U(.)x\in \Co(\R_+,X_{0+})$ by Proposition \ref{prop_existence} and Theorem \ref{theorem_global}. Let $t\geq 0$, $\ep>0$ and define
$$m=\max\left\{\dfrac{\Lambda_h}{\mu_0}+\dfrac{\Lambda_v}{\mu_v},\|x\|_{X}+\ep\right\}, \qquad \delta=\min\left\{\dfrac{\ep e^{-t}}{c_m},\ep\right\}$$
where $c_m$ is as in Proposition \ref{prop1}. Let $y\in X_{0+}$ such that $\|x-y\|_{X}\leq \delta$. 
We have $U(.)y\in \Co(\R_+,X_{0+})$ and $F$ is Lipschitz continuous whence $F\circ U(.)x$ and $F\circ U(.)y$ belong to $L^1([0,t],X)$. Also we have $(U(s)x,U(s)y)\in (X_{0+}\cap B_{X}(0,m))^2$ for every $s\in[0,t]$ by using \eqref{Eq:Estimates_sol1}-\eqref{Eq:Estimates_sol2} and the fact that $\|y\|_{\X}\leq \delta+\|x\|_{\X}\leq m$. Since $A$ is a Hille-Yosida operator from Proposition \ref{theorem_hille}, then it generates a locally Lipschitz continuous integrated semigroup $\{S_A(t)\}_{t\geq 0}$ on $X$ \cite[Proposition 3.4.3, p.116]{magal2018}. Hence the integrated solution writes as
\begin{equation}\label{Eq:u_convol}
    U(t)x=T_{A_0}(t)x+\dfrac{d}{dt}\left(S_A \ast (F\circ U(.)x)(t)\right)
\end{equation}
where $\ast$ denotes the convolution product:
$$(S_A \ast f)(t)=\int_0^t S_A(t-s)f(s)ds$$
$A_0$ is the part of $A$ in $X_0$ defined as
$$A_0x=Ax, \ \forall x\in D(A_0):=\{x\in D(A): Ax\in X_0\}.$$
and where $\{T_{A_0}\}_{t\geq 0}$ is the $\Co_0$-semigroup on $X_0$ generated by $A_0$, which exists by using the Hille-Yosida theorem. From \eqref{Eq:u_convol} we deduce that for every $s\in[0,t)$:
$$U(s)y-U(s)x=T_{A_0}(s)(y-x)+\dfrac{d}{dt}\left(S_A \ast ((F\circ U(.)y)-(F\circ U(.)x))(s)\right).$$
From \cite[Lemma 2.2.10, p. 65]{magal2018}, we have $\rho(A_0)=\rho(A)$ whence (by Proposition \ref{theorem_hille}):
$$\|T_{A_0}(s)(y-x)\|_{X}\leq e^{-\mu_0 s}\|y-x\|_{\X}\leq \delta, \ \forall s\in[0,t].$$
Using Kellermann-Hieber theorem \cite[Theorem 3.6.2, p.133]{magal2018}, it comes that for any $\tau>0$ and any $f\in L^1((0,\tau),X)$, the maps $t\longmapsto \left(S_A \ast f\right)(t)$ are continuously differentiable and for all $t\in[0,\tau]$ then
\begin{equation}\label{Eq:convol-estim}
\dfrac{d}{dt}(S_A \ast f)(t)\leq e^{-\mu_0 t}\int_0^t e^{\mu_0 s}\|f(s)\| \mathrm{d}s.
\end{equation}
Using \eqref{Eq:convol-estim} with $f=F\circ U(.)y-F\circ U(.)x\in L^1([0,t],X)$ leads to
$$\|U(s)y-U(s)x\|_{X}\leq \delta+\int_0^s \|F(U(\xi)y)-F(U(\xi)x)\|_{X}d\xi, \ \forall s\in[0,t].$$
It follows from Proposition \ref{prop1} that
$$\|U(s)y-U(s)x\|_{X}\leq \delta+c_m\int_0^s \|U(\xi)y-U(\xi)x\|_{\X}d\xi, \quad \forall s\in[0,t]$$
which leads by means of Gronwall's inequality to
\begin{equation*}
\|U(s)y-U(s)x\|_{X}\leq c_m e^s \delta\leq \ep, \quad \forall s\in [0,t].
\end{equation*}
This ends the proof.
\end{proof}

\begin{proposition}\label{Prop:L0_lyap}
For every $\widehat{x}\in \X_+$, the function $L_0$ is a Lyapunov function on $\omega(\widehat{x})$ whenever $\RR_0\leq 1$.
\end{proposition}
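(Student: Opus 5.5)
Two things must be shown: that $L_0$ is continuous on $\overline{\omega(\widehat{x})}=\omega(\widehat{x})$, and that $t\mapsto L_0(\Phi_t(\widehat{v}))$ is non-increasing for every $\widehat{v}\in\omega(\widehat{x})$.

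\emph{Continuity.} Since $\psi\leq \|\beta_h\|_{L^\infty}/\mu_0$, the map $i_h\mapsto\int_0^\infty\psi i_h\,da$ is a bounded linear functional on $L^1$. The map $S_v\mapsto g(S_v/S_v^0)$ is continuous on $[\underline{S},\infty)$, where $\underline{S}=\Lambda_v/(\mu_v+\|\beta_h\|_{L^\infty}C_h)$. By Proposition \ref{Prop:L0_def}, every point of $\omega(\widehat{x})$ has $S_v\geq\underline{S}$. Hence $L_0$ is continuous on the compact set $\omega(\widehat{x})$.

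\emph{Monotonicity.} I first differentiate formally along classical solutions. The function $\psi$ satisfies
\[
\psi'(a)=(\mu_h+\delta+r_1)(a)\psi(a)-\beta_h(a),\qquad \psi(0)=\int_0^\infty\beta_h(s)e^{-\int_0^s(\mu_h+\delta+r_1)}ds .
\]
Integrating by parts and using $i_h(t,0)=0$ gives
\[
\frac{d}{dt}\int\psi i_h=\int\psi\beta_v s_h\,da\; I_v-\int\beta_h i_h\,da .
\]
Write $\lambda_v=\int\beta_h i_h\,da$. Using $\Lambda_v=\mu_v S_v^0$, the mosquito part gives
\[
\Big(1-\frac{S_v^0}{S_v}\Big)\frac{S_v'}{S_v^0}+\frac{I_v'}{S_v^0}=-\mu_v\frac{(S_v-S_v^0)^2}{S_vS_v^0}+\lambda_v-\frac{\mu_v}{S_v^0}I_v .
\]
The $\pm\lambda_v$ terms cancel, so
\[
\frac{d}{dt}L_0=-\mu_v\frac{(S_v-S_v^0)^2}{S_vS_v^0}+\frac{\mu_v I_v}{S_v^0}\Big(\frac{S_v^0}{\mu_v}\int_0^\infty\psi(a)\beta_v(a)s_h(t,a)\,da-1\Big).
\]

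The key estimate now comes from the upper bound \eqref{Eq:Estimates_sol1}. Along any solution it gives $s_h(t,a)\le s_h^0(a)$ for $a\le t$, and an exponentially small remainder for $a>t$. On the $\omega$-limit set we can do better. For $\widehat{v}\in\omega(\widehat{x})$, $\widehat{v}$ is an $L^1$-limit of $\Phi_{t_n}(\widehat{x})$ with $t_n\to\infty$. Then \eqref{Eq:Estimates_sol1}, applied to $\Phi_{t_n+t}(\widehat{x})$ and passed to the limit, gives $s_h(t,a)\le s_h^0(a)$ a.e. for all $t\ge0$ along $\Phi_t(\widehat{v})$. The tail term vanishes because $\|n_{h,0}(\cdot-t_n)e^{-\int\mu_h}\|_{L^1}\le e^{-\mu_0t_n}\|n_{h,0}\|_{L^1}\to0$. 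Fubini shows
\[
\frac{S_v^0}{\mu_v}\int\psi\beta_v s_h^0=\RR_0^2 .
\]
Hence
\[
\frac{d}{dt}L_0\le -\mu_v\frac{(S_v-S_v^0)^2}{S_vS_v^0}+\frac{\mu_vI_v}{S_v^0}(\RR_0^2-1)\le0
\]
whenever $\RR_0\le1$.

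\emph{Main obstacle.} The solutions are only mild, so this differentiation is not directly justified. I would handle it in one of two ways. The first is to compute $\int\psi i_h$ using the Volterra representation \eqref{Eq:Semiflow_h1}--\eqref{Eq:Semiflow_h2} and Fubini, which yields the integrated identity
\[
L_0(\Phi_t\widehat{v})-L_0(\widehat{v})=\int_0^t(\cdots)\,ds .
\]
The second is to approximate $\widehat{v}$ by initial data in the domain, for which solutions are classical, derive the inequality in integrated form, and pass to the limit using Lemma \ref{Lemma:state-continuous} and continuity of $L_0$. With the integrated form in hand, non-positivity of the integrand gives that $t\mapsto L_0(\Phi_t(\widehat{v}))$ is non-increasing, which completes the proof.
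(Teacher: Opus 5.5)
Your proposal is correct and follows essentially the same route as the paper: the same computation of $\frac{d}{dt}L_0$ along classical solutions, the same bound $s_h\le s_h^0$ on $\omega(\widehat{x})$ obtained from \eqref{Eq:Estimates_sol1}, and, in your second option, the same density argument combined with Lemma~\ref{Lemma:state-continuous} to extend the integrated identity \eqref{Eq:Lyap_integral} to mild solutions. Your explicit continuity check on $\omega(\widehat{x})$ and your limit argument for $s_h\le s_h^0$ supply details the paper leaves implicit; just make sure you pass the integrated \emph{identity}, not the inequality, to the limit, because the approximating data lie in $D((A+F)_0)$ but not in $\omega(\widehat{x})$, so $s_h\le s_h^0$ is not available for them.
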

\begin{proof}
Let $\widehat{x}\in \X_+$. By Proposition \ref{Prop:L0_def}, the function $(t,\widehat{v})\longmapsto L_0(\Phi_t(\widehat{v}))$ is well-defined on $\R_+\times \omega(\widehat{x})$. Let $\widehat{v}\in \omega(\widehat{x})$ and suppose that $v=(0,\widehat{v})\in D((A+F)_0)\cap X_{0+}$ where
$$D((A+F)_0)=\{x\in D(A): Ax+F(x)\in X_0\}.$$
Then, the solution of \eqref{pb_cauchy} with initial condition $v$, which we denote by $(0,s_h,i_h,r_h,S_v,I_v)$ for convenience, belongs to $\mathcal{C}^1(\R_+,X_+)\cap \mathcal{C}(\R_+,D(A))$ by using \cite[Theorem 5.6.6, p.242]{magal2018}. It follows that the functions $s_h(t,.)$, $i_h(t,.)$ and $r_h(t,.)$ are in $W^{1,1}(\R_+)$ for each $t\geq 0$. We first compute the three following quantities: 
\begin{flalign*}
\dfrac{d}{dt}\left(\int_0^\infty \psi(a)i_h(t,a)da\right)=&\int_0^\infty \psi(a)\left(-\partial_a i_h(t,a)+\beta_v(a)I_v(t)s_h(t,a)-(\mu_h(a)+\delta(a)+r_1(a))i_h(t,a)\right)da \\
=& \int_0^\infty \psi'(a)i_h(t,a)da+\int_0^\infty \left(\beta_v(a)I_v(t)s_h(t,a)-(\mu_h(a)+\delta(a)+r_1(a))i_h(t,a)\right)da \\
=&-\int_0^\infty \beta_h(a)i_h(t,a)da+\int_0^\infty \beta_v(a)\psi(a)I_v(t)s_h(t,a)da
\end{flalign*}
\begin{flalign*}
\dfrac{d}{dt}\left(\dfrac{I_v(t)}{S_v^0}\right)&=\dfrac{S_v(t)}{S_v^0}\int_0^\infty \beta_h(a)i_h(t,a)da-\dfrac{\mu_v I_v(t)}{S_v^0}.
\end{flalign*}
and
\begin{flalign*}
\dfrac{d}{dt}\left(g\left(\dfrac{S_v(t)}{S_v^0}\right)\right)&=\dfrac{1}{S_v^0}\left(1-\dfrac{S_v^0}{S_v(t)}\right)\left(\Lambda_v-S_v(t)\int_0^\infty \beta_h(a)i_h(t,a)da-\mu_v S_v(t)\right)\\
&=-\left(\dfrac{\mu_v S_v(t)}{S_v^0}\right)\left(1-\dfrac{S_v^0}{S_v(t)}\right)^2-\dfrac{S_v(t)}{S_v^0}\int_0^\infty \beta_h(a)i_h(t,a)da+\int_0^\infty \beta_h(a)i_h(t,a)da
\end{flalign*}
for every $t>0$. Summing all the terms, we can compute the derivative of $L_0$ and see that it simply becomes:
\begin{flalign}\label{Eq:Lyap_der}
\dfrac{dL_0(\Phi_t(\widehat{v}))}{dt}&=\left(\dfrac{\mu_v I_v(t)}{S^0_v}\right)\left(\dfrac{1}{\mu_v}\int_0^\infty \beta_v(a)\psi(a)s_h(t,a) S^0_v da-1\right)
-\left(\dfrac{\mu_v S_v(t)}{S_v^0}\right)\left(1-\dfrac{S_v^0}{S_v(t)}\right)^2
\end{flalign}
for each $t>0$. It follows that
\begin{flalign}\label{Eq:Lyap_integral}
L_0(\Phi_t(\widehat{v}))&=L_0(\widehat{v})+\int_0^t \left(\left(\dfrac{\mu_v \Phi_s^{I_v}(\widehat{v})}{S^0_v}\right)\left(\dfrac{1}{\mu_v}\int_0^\infty \beta_v(a)\psi(a)\Phi_s^{s_h}(\widehat{v})(a) S^0_v da-1\right)
-\left(\dfrac{\mu_v \Phi_s^{S_v}(\widehat{v})}{S_v^0}\right)\left(1-\dfrac{S_v^0}{\Phi_s^{S_v}(\widehat{v})}\right)^2
\right)ds
\end{flalign}
Now, we want to get the same equality when $v\not\in D((A+F)_0)\cap X_{0+}$. Let $t>0$ and $\ep>0$. By density of $D((A+F)_0)\cap X_{0+}$ into $X_{0+}$ (see \cite[Lemma 5.6.7, p.243]{magal2018}) and by using Lemma \ref{Lemma:state-continuous} we know that there exists $v_{\ep}:=(0,\widehat{v}_{\ep})\in D((A+F)_0)\cap X_{0+}$ such that
$$\|v_{\ep}-v\|_{X}\leq \ep, \qquad \sup_{s\in[0,t]}\|\Phi_s(\widehat{v})-\Phi_s(\widehat{v}_{\ep})\|_{\X}\leq \ep.$$
With the expression of $L_0$, one may note that
$$|L_0(\Phi_t(\widehat{v}))-L_0(\Phi_t(\widehat{v}_{\ep}))|\leq c_1(\ep), \qquad c_1(\ep)\underset{\ep\to 0}{\to} 0$$
since we have
$$\left|g\left(\dfrac{\Phi_t^{S_v}(\widehat{v})}{S_v^0}\right)-g\left(\dfrac{\Phi_t^{S_v}(\widehat{v}_{\ep})}{S_v^0}\right)\right|\leq \dfrac{\ep}{S_v^0}\left(1+\dfrac{\mu_v+\|\beta_h\|_{L^\infty}C_h}{\mu_v}\right)$$
(by combining the mean value inequality with the lower bound \eqref{eq:minoration_sv}). Now, rewriting for simplicity the right term of \eqref{Eq:Lyap_integral} as
$$L_0(\widehat{v})+\tilde{L}(\widehat{v})(t)$$
then one may show that we have
$$|L_0(\widehat{v})+\tilde{L}(\widehat{v})(t)-L_0(\widehat{v}_{\ep})-\tilde{L}(\widehat{v}_{\ep})(t)|\leq|L_0(\widehat{v})-L_0(\widehat{v}_{\ep})|+|\tilde{L}(\widehat{v})(t)-\tilde{L}(\widehat{v}_{\ep})(t)|\leq c_2(\ep), \qquad c_2(\ep)\underset{\ep \to 0}{\to} 0.$$
Since we have $L_0(\Phi_t(\widehat{v}_{\ep}))=L_0(\widehat{v}_{\ep})+\tilde{L}(\widehat{v}_{\ep})(t)$ then it comes
$$|L_0(\Phi_t(\widehat{v}))-L_0(\widehat{v})-\tilde{L}(\widehat{v})(t)|\leq |L_0(\Phi_t(\widehat{v}))-L_0(\Phi_t(\widehat{v}_ {\ep}))|+|L_0(\Phi_t(\widehat{v}_{\ep}))-L_0(\widehat{v})-\tilde{L}(\widehat{v})(t))|\leq c_1(\ep)+c_2(\ep).$$
The latter equation being true for $\ep>0$ as small as wanted, then \eqref{Eq:Lyap_integral} is true and holds for any $t>0$. It follows that the equality \eqref{Eq:Lyap_integral} also holds for every $\widehat{v}\in \omega(\widehat{x})$. By continuity of the terms under the integral, the equality \eqref{Eq:Lyap_der} also holds for every $\widehat{v}\in \omega(\widehat{x})$. Finally, using \eqref{Eq:Estimates_sol1} and letting $t\to\infty$, we see that
$$s_h(t,a)=\Phi_t^{s_h}(v)(a)\leq \Lambda_h e^{-\int_0^a\mu_h(s)ds}=s^0_h(a)$$
for each $t\geq 0$ since $\widehat{v}\in \omega(\widehat{x})$. We deduce that
\begin{equation}\label{Eq:lyap<0}
\dfrac{dL_0(\Phi_t(\widehat{v}))}{d t}\leq \left(\dfrac{\mu_v I_v(t)}{S^0_v}\right)\left(\RR_0^2-1\right)-\left(\dfrac{\mu_v S_v(t)}{S_v^0}\right)\left(1-\dfrac{S_v^0}{S_v(t)}\right)^2\leq 0
\end{equation}
since $\RR_0\leq 1$ and consequently $L_0$ is a Lyapunov function on $\omega(\widehat{x})$.
\end{proof}

Using the Lyapunov functional defined above and the Lasalle invariance principle, we can compute the basin of attraction of the parasite-free equilibrium, which is the main result of this section.

\begin{theorem}
If $\RR_0\leq 1$ then $E_0$ is globally attractive in $\X_+$.
\end{theorem}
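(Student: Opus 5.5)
The plan is to apply LaSalle's invariance principle on $\omega(\widehat{x})$. The ingredients are already in place: Lemma \ref{Lemma:orbit_compact} and Lemma \ref{Lemma:OmegaLimit}, together with the fact from Proposition \ref{Prop:L0_lyap} that $L_0$ is a Lyapunov function on $\omega(\widehat{x})$. The goal is to show that $\omega(\widehat{x})=\{E_0\}$ for every $\widehat{x}\in\X_+$. Since $\omega(\widehat{x})$ attracts $\Phi_t(\widehat{x})$ (Lemma \ref{Lemma:OmegaLimit}), this gives $\Phi_t(\widehat{x})\to E_0$.

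First we show that $L_0$ is constant on $\omega(\widehat{x})$. Fix $\widehat{v}\in\omega(\widehat{x})$. Because $\omega(\widehat{x})$ is invariant, we can pick a complete orbit $\phi$ through $\widehat{v}$ contained in $\omega(\widehat{x})$. Along $\phi$, the map $t\mapsto L_0(\phi(t))$ is non-increasing on all of $\R$, and it is bounded. Boundedness uses compactness of $\omega(\widehat{x})$, continuity of $L_0$ there, and the uniform positive lower bound \eqref{eq:minoration_sv} on $S_v$. Hence $L_0(\phi(t))$ has limits as $t\to\pm\infty$. By invariance and compactness, $\omega(\widehat{v})\subset\omega(\widehat{x})$ and $\alpha(\phi)\subset\omega(\widehat{x})$, and $L_0$ is constant on each of these sets. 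In addition, $L_0$ is constant on $\omega(\widehat{x})$ itself. To see this, evaluate $L_0$ along the original trajectory $\Phi_t(\widehat{x})$. If needed, shift time so that the trajectory lies within $\ep$ of $\omega(\widehat{x})$, where \eqref{Eq:lyap<0} holds approximately. Alternatively, use the standard argument that $L_0$ is continuous on the compact set $\omega(\widehat{x})$ and non-increasing along every complete orbit in it, which forces $\inf$ and $\sup$ of $L_0$ over $\omega(\widehat{x})$ to coincide: take a point achieving the maximum, follow its backward orbit, and use that the $\alpha$-limit lies in $\omega(\widehat{x})$. Once $L_0$ is constant on the invariant set $\omega(\widehat{x})$, equation \eqref{Eq:Lyap_der} gives $dL_0/dt\equiv 0$ along every trajectory in it.

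Next we exploit $dL_0/dt\equiv0$. By \eqref{Eq:lyap<0}, both non-positive terms must vanish. The second term gives $S_v(t)\equiv S_v^0$ on $\omega(\widehat{x})$. Plugging this into the $S_v$ equation yields
\[
0=\Lambda_v-S_v^0\int_0^\infty\beta_h i_h\,da-\mu_vS_v^0,
\]
so $\int_0^\infty\beta_h(a)i_h(t,a)\,da\equiv0$. The $I_v$ equation then reads $I_v'=-\mu_vI_v$ along complete orbits. Since $I_v$ is bounded on the whole real line, this forces $I_v\equiv0$. (In the case $\RR_0<1$, the first term alone already gives $I_v\equiv 0$.) With $I_v\equiv0$, the human equations decouple into linear transport equations along complete orbits. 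Writing $i_h$ via characteristics back to $-\infty$, with no source term and a zero boundary condition, the bounds \eqref{Eq:Estimates_sol1} give $i_h\equiv0$, and then $r_h\equiv0$. The same argument applied to $s_h$ gives $s_h(t,a)=\Lambda_h e^{-\int_0^a\mu_h}=s_h^0(a)$, because the contribution of the initial data decays like $e^{-\mu_0 t}$ and is pushed to $-\infty$. Hence $\omega(\widehat{x})=\{E_0\}$.

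I expect the main obstacle to be the constancy of $L_0$ on $\omega(\widehat{x})$. The Lyapunov property was only established on $\omega(\widehat{x})$, not along the original trajectory, so the usual LaSalle argument ($L_0(\Phi_t(\widehat{x}))$ decreasing to a limit) is not directly available. My first attempt is the max-point argument above. Let $\widehat{v}^*$ maximise $L_0$ on $\omega(\widehat{x})$, and take a complete orbit $\phi$ through it. Along $\phi$, $L_0$ is non-increasing and bounded above by $L_0(\widehat{v}^*)$, so $L_0\equiv L_0(\widehat{v}^*)$ on $\phi((-\infty,0])$. The derivative therefore vanishes there, and the previous paragraph applied on this negative half-orbit gives $\phi(s)=E_0$ for $s\le 0$, hence $\widehat{v}^*=E_0$. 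Since $L_0(E_0)=0\le L_0$ everywhere, the maximum is $0$, and $L_0(\widehat{v})=0$ forces $\widehat{v}$ to have $i_h=0$, $S_v=S_v^0$, $I_v=0$. The bound $s_h\le s_h^0$ on $\omega(\widehat{x})$ combined with invariance then gives $\widehat{v}=E_0$.
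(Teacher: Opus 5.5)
Your argument is correct in substance but organises the LaSalle step differently from the paper. The paper never proves directly that $L_0$ is constant on $\omega(\widehat{x})$. For each $\widehat{v}\in\omega(\widehat{x})$ it does the following:
\begin{itemize}
\item it takes a complete orbit $\phi$ through $\widehat{v}$;
\item it invokes Smith--Thieme to get that $\alpha(\phi)$ is non-empty, compact and invariant, and that $L_0$ is constant on it;
\item it runs the zero-derivative argument on complete orbits inside $\alpha(\phi)$ to obtain $\alpha(\phi)=\{E_0\}$;
\item since $L_0(\phi(s))$ is non-increasing, non-negative and tends to $L_0(E_0)=0$ as $s\to-\infty$, it concludes $L_0\equiv 0$ along $\phi$, whence $\widehat{v}=E_0$.
\end{itemize}
Your maximiser argument replaces the $\alpha$-limit machinery by compactness of $\omega(\widehat{x})$ and continuity of $L_0$ on it. That continuity does hold, thanks to \eqref{eq:minoration_sv} and $\psi\le\|\beta_h\|_{L^\infty}/\mu_0$. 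The negative half-orbit through a maximiser has constant $L_0$, hence equals $E_0$, hence $\max L_0=0$. This is a bit more elementary and needs only negative half-orbits. In the zero-derivative step you get $I_v\equiv 0$ from $I_v'=-\mu_v I_v$ and boundedness in backward time. The paper instead uses the identity $S_v+I_v=\Lambda_v/\mu_v$ on $\omega(\widehat{x})$, which comes from \eqref{Eq:Estimates_sol3}. Both work.

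One inference at the end is wrong as stated. $L_0(\widehat{v})=0$ yields $S_v=S_v^0$, $I_v=0$ and $\int_0^\infty \psi\, i_h\,da=0$, but not $i_h=0$. The reason is that $\psi$ vanishes on $[a_0,\infty)$ whenever $\beta_h$ does, and Assumption \ref{Assumption:Omega} allows this. Likewise, $s_h=s_h^0$ does not follow from the bound $s_h\le s_h^0$.

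The fix is already in your proposal. Once $\max_{\omega(\widehat{x})}L_0=0$, every point of $\omega(\widehat{x})$ is a maximiser. So the backward-orbit argument you applied to $\widehat{v}^*$ gives $\widehat{v}=E_0$ for each $\widehat{v}$: constant $L_0$, then $S_v\equiv S_v^0$ and $I_v\equiv 0$, then \eqref{Eq:Semiflow_h2} for $i_h$, $r_h$, $s_h$.

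Also discard the alternative of evaluating $L_0$ along the original trajectory. The inequality \eqref{Eq:lyap<0} is only established on $\omega(\widehat{x})$, so that route is not justified.
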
               
\begin{proof}
Suppose that $\RR_0\leq 1$ and let $\widehat{x}\in \X_+$ so that $\omega(\widehat{x})\subset \X_+$. Let $\widehat{v}\in\omega(\widehat{x})$. Since $\omega(\widehat{x})$ is invariant, it follows by definition (see \cite{Hale88}) that there exists a complete orbit $\gamma(\widehat{v}):=\{\phi(s), s\in \R\}\subset \omega(\widehat{x})$ through $\widehat{v}$. From Proposition \ref{Prop:L0_lyap}, we know that $L_0$ is a Lyapunov function on $\omega(\widehat{x})$ (and in particular on $\gamma(\widehat{v})$). Using \cite[Prop. 2.51 p. 53]{SmithThieme2011} we deduce that $L_0$ is constant on $\alpha(\phi)$ and on $\omega(z)$. Since $\gamma(\widehat{v})\subset \omega(\widehat{x})$ which is compact, then $\gamma^-(\widehat{v}):=\{\phi(s),s\leq 0\}$ is relatively compact in $\X$ and non empty. From \cite[Theorem 2.48, p.52]{SmithThieme2011} the alpha-limit set $\alpha(\phi)$ is non-empty, compact, invariant, connected and $\lim_{t\to -\infty}d(\phi(t),\alpha(\phi))=0$. We show that $\alpha(\widehat{v})$ is reduced to $E_0$. Let $\widehat{z}:=(s_h,i_h,r_h,S_v,I_v)\in \alpha(\widehat{v})$. Since $\alpha(\widehat{v})$ is invariant then there exists a complete orbit $\gamma(\widehat{z}):=\{\phi_2(s),s\in \R\}\subset \alpha(\widehat{v})$ on which $L_0$ is constant. It implies that 
$$\dfrac{d}{dt}L_0(\Phi_t(\phi_2(s))=0, \quad \forall t>0 \quad \text{and} \quad \forall  s\in \R.$$
From \eqref{Eq:lyap<0} we get $\Phi_t^{S_v}(\phi_2(s))=S_v^0$ for every $s\in \R$ and every $t>0$ (whence for every $t\geq 0$ since $\Phi_0^{S_v}(\phi_2(s))=\Phi_{t}^{S_v}(\phi_2(s-t))=S_v^0$ for every $t>0$). In particular, taking $s=-t$ leads to $S_v=\Phi_t^{S_v}(\phi_2(-t))=S_v^0$. Also, since $\alpha(\widehat{v})\subset \omega(\widehat{x})$ then from \eqref{Eq:Estimates_sol3} we deduce that $\Phi_t^{S_v}(\phi_2(s))+\Phi_t^{I_v}(\phi_2(s))=\frac{\Lambda_v}{\mu_v}$ for every $(t,s)\in \R_+\times\R$, hence $\Phi_t^{I_v}(\phi_2(s))=0$ for every $(t,s)\in \R_+\times\R$ (and in particular $I_v=0$). It follows, by using \eqref{Eq:Semiflow_h2}, that $\Phi_t^{i_h}(\phi_2(s),a)=0$ for every $(t,s,a)\in\R_+\times\R\times [0,t]$. Actually we even get
$$\Phi_t^{i_h}(\phi_2(s),a)=0, \quad \forall (t,s,a)\in \R_+\times\R\times \R_+$$
since for every $a\geq t$ we remark that
$$\Phi_t^{i_h}(\phi_2(s),a)=\Phi_{t+a}^{i_h}(\phi_2(s-a),a)=0, \quad \forall (t,s,a)\in \R_+\times \R\times [t,+\infty)$$
whence $i_h\equiv 0$. Again with \eqref{Eq:Semiflow_h2} we get
$$\Phi_t^{r_h}(\phi_2(s),a)=0, \quad \text{and} \quad \Phi_t^{s_h}(\phi_2(s),a)=s_h^0(a), \quad \forall (t,s,a)\in \R_+\times\R\times \R_+$$
hence $\widehat{z}=E_0$ and $\alpha(\widehat{v})=\{E_0\}$, \textit{i.e.} $\lim_{t\to -\infty}d(\phi(t),\{E_0\})=0$. Since $L_0(E_0)=0$ then we have $L_0(\phi(s))=0$ for every $s\in \R$ since $L_0$ is a Lyapunov function on $\gamma(\widehat{v})$. This implies that $L_0$ is constant on $\gamma(\widehat{v})$. Following the above arguments, we show that $\widehat{v}=E_0$. In conclusion $\omega(\widehat{x})=\{E_0\}$ for each $\widehat{x}\in \X_+$ and $E_0$ is globally attractive in $\X_+$.
\end{proof}

\subsection{Global stability of the parasite-free equilibrium}

In this section, we handle the stability of $E_0$ in the case $\RR_0=1$ that is when the principle of linearisation fails (see Proposition \ref{Prop:LAS_E0}). In \cite{Richard20}, the stability was proved by making use of the Lyapunov function and the idea that taking an initial condition close to the equilibrium is equivalent to have a Lyapunov function small enough at this point, thus controlling some energy at each time. In our case, it cannot directly be applied since we have a Lyapunov function on $\omega(z)$ for each $z\in \X_+$, so it is difficult to control the energy through time. However, using estimates on $s_h$, on $S_v$ and using the Lyapunov function $L_0$ we can then deal with the stability by showing directly the definition.

\begin{theorem}\label{Thm:stab_R0=1}
Suppose that $\RR_0=1$, then the disease-free equilibrium $E_0$ is Lyapunov stable.
\end{theorem}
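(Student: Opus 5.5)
We will prove Lyapunov stability directly from the definition: given $\ep>0$, we look for $\eta>0$ such that $\|\widehat{x}-E_0\|_{\X}\leq \eta$ implies $\|\Phi_t(\widehat{x})-E_0\|_{\X}\leq \ep$ for every $t\geq 0$. The plan is to first control the $L_0$-type energy along the trajectory itself rather than on $\omega$-limit sets. For that, the derivative computation \eqref{Eq:Lyap_der} is valid along any trajectory starting in $\X_+$: it holds for initial data in $D((A+F)_0)$ and extends by density together with Lemma \ref{Lemma:state-continuous}. The only issue is that the bound $s_h\leq s_h^0$ used in \eqref{Eq:lyap<0} fails for finite time.

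\textbf{Step 1: controlling $s_h$.} By \eqref{Eq:Estimates_sol1}, we have $s_h(t,a)\leq s_h^0(a)$ for $a\leq t$ and $s_h(t,a)\leq s_{h,0}(a-t)e^{-\int_{a-t}^a\mu_h}$ for $a>t$. Hence
$$\int_0^\infty \beta_v\psi s_h(t,a)\,da\leq \int_0^\infty\beta_v\psi s_h^0\,da+\|\beta_v\|_\infty\|\psi\|_\infty e^{-\mu_0 t}\|s_{h,0}-s_h^0\|_{L^1}.$$
Indeed, for $a>t$ we write $s_{h,0}(a-t)e^{-\int}\leq s_h^0(a)+|s_{h,0}-s_h^0|(a-t)e^{-\mu_0t}$, using $s_h^0(a-t)e^{-\int_{a-t}^a\mu_h}=s_h^0(a)$. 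With $\RR_0=1$, the bracket in \eqref{Eq:Lyap_der} is then at most $S_v^0\|\beta_v\|_\infty\|\psi\|_\infty\mu_v^{-1}\eta e^{-\mu_0 t}$. Since $I_v(t)\leq C_r$ by \eqref{Eq:Estimates_sol3}, integrating gives
$$L_0(\Phi_t(\widehat{x}))\leq L_0(\widehat{x})+C\eta/\mu_0 \quad \text{for all } t\geq 0.$$
\textbf{Step 2: smallness of $L_0$ initially.} The quantity $L_0(\widehat{x})$ is small when $\eta$ is small: $\psi$ is bounded by $\|\beta_h\|_\infty/\mu_0$, $g$ is continuous at $1$, and $S_{v,0}$ stays near $S_v^0>0$. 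Thus $L_0(\Phi_t(\widehat{x}))\leq \kappa(\eta)\to 0$ uniformly in $t$. Using $g(x)\geq 0$ and $g(x)\to 0$ only as $x\to 1$, this controls $|S_v(t)-S_v^0|$ and $I_v(t)$ uniformly. (We can also bound $I_v$ directly from \eqref{Eq:Estimates_sol3}: $I_v=N_v-S_v$ with $N_v$ within $\eta$ of $S_v^0$.)

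\textbf{Step 3: the human components.} With $\sup_t I_v(t)\leq \kappa'(\eta)$, we use the Volterra representations \eqref{Eq:Semiflow_h1}--\eqref{Eq:Semiflow_h2}. The incidence term $\beta_v I_v s_h$ has $L^1$ norm at most $\|\beta_v\|_\infty\kappa' C_h$. Writing $J(t)=\|i_h(t)\|_{L^1}+\|r_h(t)\|_{L^1}$, the sum $i_h+r_h$ satisfies a transport equation with loss rate at least $\mu_0$, source $\beta_v I_v s_h$, and outflow $r_2 r_h\geq 0$. This gives
$$J(t)\leq e^{-\mu_0 t}J(0)+\|\beta_v\|_\infty C_h\kappa'/\mu_0,$$
which is small. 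For $s_h$, we compare it with $s_h^0$ through the equation for $s_h-s_h^0$: it is driven by $-\beta_v I_v s_h+r_2r_h$ with decay $\mu_0$, plus the initial discrepancy transported and decaying. Hence $\|s_h(t)-s_h^0\|_{L^1}\leq \eta e^{-\mu_0 t}+(\|\beta_v\|_\infty C_h\kappa'+\|r_2\|_\infty \sup J)/\mu_0$. Choosing $\eta$ small makes the total below $\ep$.

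\textbf{Main obstacle.} The hardest part is Step 1–2: justifying that \eqref{Eq:Lyap_der} holds along arbitrary trajectories, not only on $\omega$-limit sets, and absorbing the non-sign-definite transient term through the $e^{-\mu_0 t}$ estimate. We also need a uniform positive lower bound on $S_v(t)$ so that $g$ and $L_0$ stay controlled. For that, we use \eqref{Eq:Estimates_pos}, which bounds $S_v(t)$ below by $\min\{S_{v,0},\Lambda_v/(\mu_v+\|\beta_h\|_\infty C_h)\}>0$, and $S_{v,0}$ is near $S_v^0$.
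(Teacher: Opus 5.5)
Your proposal is correct and follows essentially the same route as the paper. You use \eqref{Eq:Estimates_sol1} and $\RR_0=1$ to bound the right-hand side of \eqref{Eq:Lyap_der} along the trajectory by $C\eta e^{-\mu_0 t}$, integrate to make $L_0(\Phi_t(\widehat{x}))$ small uniformly in $t$, deduce a uniform bound on $I_v$, and then control the human compartments and $S_v$ through the Volterra representations. Your treatment of $s_h-s_h^0$ and of $S_v$ via $g$ is, if anything, slightly cleaner than the paper's dominated-convergence and differential-inequality arguments.

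One small slip needs fixing: \eqref{Eq:Estimates_sol1} bounds only $s_h+i_h+r_h$, not $s_h$ alone, because the term $r_2 r_h$ feeds back into $s_h$. So in Step 1 the transient term must be $e^{-\mu_0 t}\bigl(\|s_{h,0}-s_h^0\|_{L^1}+\|i_{h,0}\|_{L^1}+\|r_{h,0}\|_{L^1}\bigr)\le \eta e^{-\mu_0 t}$, as the paper does.
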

\begin{proof}Let $\ep>0$, $\eta>0$ and $\widehat{z}:=(s_h^\eta, i_h^\eta, r_h^\eta, S_v^\eta, I_v^\eta)\in B_{\X}(E_0,\eta)$, implying that $\|E_0-\widehat{z}\|_{\X}\leq \eta$. Without loss of generality, we suppose that $\eta\leq 1$. We still denote 
$$(s_h, i_h, r_h, S_v, I_v)(t)=\Phi_t(\widehat{z})$$
the solution of \eqref{system} at time $t\geq 0$ with the initial condition $\widehat{z}$. The goal is to prove that for $\eta$ small enough, then we have:
$$\|\Phi_t(\widehat{z})-E_0\|_{\X}\leq \ep$$
for each $t\geq 0$. Using \eqref{Eq:Estimates_sol1} we see that the following estimates hold:
\begin{equation}\label{Eq:Sh_estimate}
s_h(t,a)\leq \begin{cases}
s_h^0(a) & \text{if } t> a \\
(s_h^\eta+i_h^\eta+r_h^\eta)(a-t)e^{-\int_{a-t}^a \mu_h(s)ds} & \text{if } a\geq t
\end{cases}
\end{equation}
\begin{equation*}
i_h(t,a)\leq \begin{cases}
s_h^0(a) & \text{if } t> a \\
(s_h^\eta+i_h^\eta+r_h^\eta)(a-t)e^{-\int_{a-t}^a \mu_h(s)ds} & \text{if } a\geq t
\end{cases}
\end{equation*}
and using \eqref{Eq:Estimates_sol3} we get
$$S_v(t)\leq S_v^0+2\eta e^{-\mu_v t}, \qquad I_v(t)\leq S_v^0+2\eta e^{-\mu_v t}$$
for each $t\geq 0$. Reminding \eqref{Eq:Lyap_der} and using \eqref{Eq:Sh_estimate}, we see that:
\begin{flalign*}
\dfrac{dL_0(\Phi_t(v))}{d t}&=\left(\dfrac{\mu_v I_v(t)}{S^0_v}\right)\left(\dfrac{1}{\mu_v}\int_0^\infty \beta_v(a)\psi(a)s_h(t,a) S^0_v da-1\right)
-\left(\dfrac{\mu_v S_v(t)}{S_v^0}\right)\left(1-\dfrac{S_v^0}{S_v(t)}\right)^2 \\
&\leq I_v(t)\|\beta_v \psi\|_{L^\infty}\int_t^\infty (s_h^\eta-s_h^0+i_h^\eta+r_h^\eta)(a-t)e^{-\int_{a-t}^a \mu_h(s)ds}da \\
&\leq  \left(S_v^0+2\eta e^{-\mu_v t}\right)3 \eta \|\beta_v \psi\|_{L^\infty}e^{-\mu_0 t}\\
&\leq C\eta e^{-\mu_0 t}
\end{flalign*}
for each $t\geq 0$ where $C$ is some constant given by the parameters and that is independent of $\eta$. It follows that
\begin{flalign*}
L_0(\Phi_t(\widehat{z}))&\leq L_0(\Phi_0(\widehat{z})))+\int_0^t C\eta e^{-\mu_0 s}ds \\
&\leq \int_0^\infty \psi(a)i_h^\eta(a)da+g\left(\dfrac{S_v^\eta}{S_v^0}\right)+\dfrac{I_v^\eta}{S_v^0}+\dfrac{C \eta}{\mu_0}(1-e^{-\mu_0 t})\\
&\leq \eta\|\psi\|_{L^\infty}+\dfrac{S_v^\eta}{S_v^0}-\ln\left(\dfrac{S_v^\eta}{S_v^0}\right)-1+\dfrac{\eta}{S^0_v}+\dfrac{C \eta}{\mu_0}(1-e^{-\mu_0 t}) \\
&\leq \eta\left(\|\psi\|_{L^\infty}+\dfrac{1}{S_v^0}+\dfrac{C}{\mu_0}\right)+\dfrac{S_v^\eta}{S_v^0}+\ln\left(\dfrac{S_v^0}{S_v^\eta}\right)-1\\
&\leq \eta\left(\|\psi\|_{L^\infty}+\dfrac{1}{S_v^0}+\dfrac{C}{\mu_0}+\dfrac{1}{S_v^0}\right)+\dfrac{S_v^0}{S_v^\eta}-1\\
&\leq \eta\left(\|\psi\|_{L^\infty}+\dfrac{1}{S_v^0}+\dfrac{C}{\mu_0}+\dfrac{1}{S_v^0}\right)+\dfrac{\eta}{S_v^0-\eta}\\
\end{flalign*}
where we used the fact that $\ln(x)\leq x-1$. Still without loss of generality, we can suppose that $\eta\leq \frac{S_v^0}{2}$ so that
$$L_0(\Phi_t(\widehat{z}))\leq \eta\left(\|\psi\|_{L^\infty}+\dfrac{1}{S_v^0}+\dfrac{C}{\mu_0}+\dfrac{1}{S_v^0}+\dfrac{2}{S_v^0}\right)\leq C_2 \eta$$
for each $t\geq 0$, with some constant $C_2$ independent of $\eta$ and $t$. It readily follows, by definition of $L_0$, that 
$$I_v(t)\leq C_2 S^0_v \eta=:C_3 \eta \underset{\eta \to 0}{\to} 0$$
uniformly in $t$. Using \eqref{Eq:Semiflow_h1}, the previous estimate on $I_v(t)$ and \eqref{Eq:Sh_estimate} we get
\begin{flalign*}
\int_t^\infty i_h(t,a)da&\leq \|i_h^\eta\|_{L^1}e^{-\mu_0 t}+\|\beta_v\|_{L^\infty}C_3 \eta\int_t^\infty \int_{a-t}^a s_h(t-a+s,s)e^{-\int_s^a \mu_h(\xi)d\xi}ds da \\
&\leq \eta e^{-\mu_0 t}+\|\beta_v\|_{L^\infty}C_3 \eta\int_t^\infty  \int_{a-t}^a (s_h^\eta+i_h^\eta+r_h^\eta)(a-t)e^{-\int_{a-t}^s \mu_h(\xi)d\xi} e^{-\int_s^a \mu_h(\xi)d\xi} ds da \\
&\leq \eta e^{-\mu_0 t}+\|\beta_v\|_{L^\infty} C_3 \eta \int_t^\infty (s_h^\eta+i_h^\eta+r_h^\eta)(a-t)e^{-\mu_0 t}t da \\
&\leq \eta e^{-\mu_0 t}+\|\beta_v\|_{L^\infty} C_3 \eta\left(\dfrac{\Lambda_h}{\mu_0}+3\eta\right)e^{-\mu_0 t}t \\
&\leq \eta e^{-\mu_0 t}+\|\beta_v\|_{L^\infty} C_3 \eta\left(\dfrac{\Lambda_h}{\mu_0}+3\eta\right)\dfrac{e^{-1}}{\mu_0} \\
&\leq \eta C_4
\end{flalign*}
for some constant $C_4$. Using \eqref{Eq:Semiflow_h2}, we can decompose $i_h$ as follows:
\begin{flalign*}
\|i_h(t,.)\|_{L^1}&=\int_0^t i_h(t,a)da+\int_t^\infty i_h(t,a)da \\
&\leq \int_0^t \int_{0}^{a}\beta_v(s)I_v(t-a+s)s_h(t-a+s,s)e^{-\int_s^a \mu_h(\xi)d\xi}ds da+\int_t^\infty i_h(t,a)da \\
&\leq  \|\beta_v\|_{L^\infty}C_3 \eta\int_0^t \int_{0}^a s^0_h(s) e^{-\int_s^a \mu_h(\xi)d\xi}ds da+\int_t^\infty i_h(t,a)da \\
&\leq \Lambda_h\|\beta_v\|_{L^\infty}C_3 \eta \int_0^t a e^{-\mu_0 a}da+\eta C_4 \\
&\leq \Lambda_h \|\beta_v\|_{L^\infty}C_3\eta\left(\dfrac{1-e^{-\mu_0 t}}{\mu_0^2}-\dfrac{t e^{-\mu_0 t}}{\mu_0}\right)+\eta C_4 \\
&\leq \Lambda_h \|\beta_v\|_{L^\infty} C_3 \dfrac{\eta}{\mu_0^2}+\eta C_4\leq C_5 \eta
\end{flalign*}
for some positive constant $C_5$, whence $\|i_h(t,.)\|_{L^1}\underset{\eta \to 0}{\longrightarrow} 0$ uniformly in $t$. For $r_h$, it suffices to use \eqref{Eq:Semiflow_h1}-\eqref{Eq:Semiflow_h2} to get:
\begin{flalign*}
\int_0^\infty r_h(t,a)da&\leq \left(\int_0^\infty r_h^\eta(a)da\right)e^{-\mu_0 t}+\dfrac{\|r_1\|_{L^\infty}C_5 \eta}{\mu_0}\left(1-e^{-\mu_0 t}\right)\\
&\leq\eta e^{-\mu_0 t}+\dfrac{\|r_1\|_{L^\infty}C_5 \eta}{\mu_0}\left(1-e^{-\mu_0 t}\right)\\
&\leq C_6 \eta
\end{flalign*}
for some positive constant $C_6$ so that $\|r_h(t,.)\|_{L^1}\underset{\eta \to 0}{\to}0$ uniformly in $t$. To prove the estimate on $S_v$, we remark that:
$$S_v'(t)\geq \Lambda_v-\|\beta_h\|_{L^\infty}C_5 \eta S_v(t)-\mu_v S_v(t)$$
leading to
\begin{flalign*}
S_v(t)\geq S_v^\eta e^{-(\|\beta_h\|_{L^\infty}C_5\eta+\mu_v)t}+\dfrac{\Lambda_v}{\|\beta_h\|_{L^\infty}C_5\eta +\mu_v}\left(1-e^{-(\|\beta_h\|_{L^\infty}C_5\eta+\mu_v)t}\right)
\end{flalign*}
whence
\begin{flalign*}
\dfrac{\Lambda_v}{\mu_v}-S_v(t)&\leq \dfrac{\Lambda_v}{\mu_v}-\dfrac{\Lambda_v}{\|\beta_h\|_{L^\infty}C_5 \eta +\mu_v}+e^{-(\|\beta_h\|_{L^\infty}C_5\eta+\mu_v)t}\left(\dfrac{\Lambda_v}{\|\beta_h\|_{L^\infty}C_5 \eta +\mu_v}-S^\eta_v\right)\\
&\leq \dfrac{\Lambda_v \|\beta_h\|_{L^\infty}C_5 \eta}{\mu_v(\|\beta_h\|_{L^\infty}C_5\eta+\mu_v)}+e^{-(\|\beta_h\|_{L^\infty}C_5\eta+\mu_v)t}\left(-\dfrac{\Lambda_v \|\beta_h\|_{L^\infty}C_5 \eta}{\mu_v(\|\beta_h\|_{L^\infty}C_5 \eta +\mu_v)}+S_v^0-S_v^\eta\right)\\
&\leq \dfrac{\Lambda_v \|\beta_h\|_{L^\infty}C_5 \eta}{\mu_v^2}+\eta.
\end{flalign*}
Moreover, using \eqref{Eq:Estimates_sol3} we know that:
\begin{flalign*}
S_v(t)-\dfrac{\Lambda_v}{\mu_v}&\leq \left(S_v^\eta+I_v^\eta -\dfrac{\Lambda_v}{\mu_v}\right)e^{-\mu_v t}\leq 2\eta
\end{flalign*}
so that 
$$\left\lvert S_v(t)-\dfrac{\Lambda_v}{\mu_v}\right\rvert\leq C_7 \eta$$
for some positive constant $C_7$ so that $|S_v(t)-S_v^0|\underset{\eta \to 0}{\to} 0$ uniformly in $t$. Finally, for the estimate on $s_h$, we see on one hand with \eqref{Eq:Estimates_sol1} that:
\begin{flalign*}
s_h(t,a)-s_h^0(a)\leq (s_h^\eta-s_h^0+i_h^\eta+r_h^\eta)(a-t)e^{-\int_{a-t}^a \mu_h(s)ds}\mathbf{1}_{\{a\geq t\}}.
\end{flalign*}
On the other hand, we have, by using \eqref{Eq:Semiflow_h1}-\eqref{Eq:Semiflow_h2}:
\begin{equation*}
s_h(t,a)\geq \begin{cases}
s_h^\eta(a-t)e^{-\int_{a-t}^a (\beta_v(s)C_3 \eta+\mu_h(s))ds} & \text{ if } a\geq t \\
\Lambda_h e^{-\int_0^a (\beta_v(s)C_3 \eta +\mu_h(s))ds} & \text{ if } a<t
\end{cases}
\end{equation*}
Suppose that $a<t$, then the two latter equations lead to
\begin{flalign*}
0\geq s_h(t,a)-s^0_h(a)\geq \Lambda_h e^{-\int_0^a \mu_h(s)ds}\left(e^{-\int_0^a \beta_v(s)C_3 \eta ds}-1\right).
\end{flalign*}
It follows that
\begin{flalign*}
\int_0^t |s_h(t,a)-s^0_h(a)|da&\leq \Lambda_h \int_0^t e^{-\mu_0 a}\left(1-e^{-\|\beta_v\|_{L^\infty}C_3 \eta a}\right)da.
\end{flalign*}
The function $a\longmapsto e^{-\mu_0 a}\left(1-e^{-\|\beta_v\|_{L^\infty}C_3 \eta a}\right)$ is bounded above by the function $a\longmapsto e^{-\mu_0 a}$ which is integrable on $\R_+$ and independent of $\eta$. Moreover, we see that 
$$e^{-\mu_0 a}\left(1-e^{-\|\beta_v\|_{L^\infty}C_3 \eta a}\right)\underset{\eta \to 0}{\to}0$$
for each $a\geq 0$. Consequently, by using Lebesgue's dominated convergence theorem, we have 
$$\int_0^t |s_h(t,a)-s^0_h(a)|da\underset{\eta \to 0}{\to} 0$$
uniformly in $t$. Now, suppose that $a\geq t$, then 
\begin{flalign*}
s_h(t,a)-s^0_h(a)\geq s_h^0(a-t)e^{-\int_{a-t}^a \mu_h(s)ds}\left(e^{-\int_{a-t}^a \beta_v(s)C_3 \eta}-1\right)+\left(s^\eta_h(a-t)-s^0_h(a-t)\right)e^{-\int_{a-t}^a (\beta_v(s)C_3\eta+\mu_h(s))ds}.
\end{flalign*}
It follows that
\begin{flalign*}
\int_t^\infty |s_h(t,a)-s^0_h(a)|da&\leq \left(\|s_h^\eta-s^0_h\|_{L^1}+\|i_h^\eta\|_{L^1}+\|r_h^\eta\|_{L^1}\right)e^{-\mu_0 t}+\int_t^\infty s^0_h(a-t)e^{-\int_{a-t}^a \mu_h(s)ds}\left(1-e^{-\int_{a-t}^a \beta_v(s)C_3 \eta}\right)da\\
&\leq 3\eta e^{-\mu_0 t}+\Lambda_h\int_t^\infty e^{-\int_0^a \mu_h(s)ds}\left(1-e^{-\|\beta_v\|_{L^\infty}C_3\eta t}\right)da \\
&\leq 3\eta e^{-\mu_0 t}+\Lambda_h\int_t^\infty e^{-\int_0^a \mu_h(s)ds}\left(1-e^{-\|\beta_v\|_{L^\infty}C_3\eta a}\right)da\underset{\eta \to 0}{\to }0
\end{flalign*}
uniformly in $t$ by using Lebesgue's theorem once again. Gathering all previous estimates, we obtain:
$$\lim_{\eta \to 0}\|\Phi_t(\widehat{z})-E_0\|_{\X}=0$$
uniformly in $t$. Consequently, considering small enough $\eta$ proves the stability of $E_0$ whenever $\RR_0=1$.
\end{proof}

The local stability obtained in Proposition \ref{Prop:LAS_E0} for $\RR_0<1$ and Theorem \ref{Thm:stab_R0=1} for $\RR_0=1$, combined with the attractiveness proved in Proposition \ref{Prop:L0_lyap} lead to the following result.
\begin{theorem}
\label{gas_dfe}
The parasite-free equilibrium $E_0$ is globally asymptotically stable in $\X_+$ whenever $\RR_0\leq 1$.
\end{theorem}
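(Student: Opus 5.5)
Global asymptotic stability means two things: Lyapunov stability of $E_0$, and global attractivity of $E_0$ in $\X_+$. The plan is simply to assemble results already proved, splitting into the cases $\RR_0<1$ and $\RR_0=1$.

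\emph{Attractivity.} This part needs no case split. The preceding theorem, which rests on the Lyapunov functional $L_0$ of Proposition \ref{Prop:L0_lyap} and the invariance argument on $\omega$- and $\alpha$-limit sets, gives $\omega(\widehat{x})=\{E_0\}$ for every $\widehat{x}\in\X_+$ whenever $\RR_0\leq 1$. By Lemma \ref{Lemma:OmegaLimit} (item 3), this yields $\lim_{t\to\infty}\|\Phi_t(\widehat{x})-E_0\|_{\X}=0$ for every $\widehat{x}\in\X_+$.

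\emph{Stability, case $\RR_0<1$.} Proposition \ref{Prop:LAS_E0} gives local asymptotic stability, hence in particular Lyapunov stability. That proposition rests on the linearisation principle \cite[Proposition 5.7.3]{magal2018} together with the spectral bound $s((A+DF_{E_0})_0)<0$. This bound follows from Lemma \ref{Lemma:spectra}, which controls the essential growth bound, combined with the characteristic equation $g(\lambda)=1$, which has no root with $\Re\lambda\geq 0$. I would briefly note that the linearised stability there is stated for the semiflow on $X_{0+}$, so it transfers to $\X_+$ through the identification $x=(0,\widehat{x})$.

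\emph{Stability, case $\RR_0=1$.} Theorem \ref{Thm:stab_R0=1} gives Lyapunov stability directly. It shows that $\sup_{t\geq0}\|\Phi_t(\widehat{z})-E_0\|_{\X}\to 0$ as $\|\widehat{z}-E_0\|_{\X}\to 0$, so for each $\ep>0$ one can choose a suitable $\eta$.

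\emph{Conclusion.} Combining stability with global attraction gives global asymptotic stability in $\X_+$ for all $\RR_0\leq1$. There is no substantive obstacle, since all the work is in the earlier results. The only point needing care is the definitional one: global asymptotic stability requires Lyapunov stability, not merely local attractivity. For $\RR_0<1$ this comes from the exponential decay of the linearised semigroup; for $\RR_0=1$ it must come from Theorem \ref{Thm:stab_R0=1} rather than from linearisation, which fails there. With that checked, the two cases cover $\RR_0\leq 1$ completely.
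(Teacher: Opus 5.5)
Your proposal is correct and matches the paper's own argument. The paper likewise obtains the result by combining three earlier results: the local asymptotic stability of Proposition \ref{Prop:LAS_E0} for $\RR_0<1$, the Lyapunov stability of Theorem \ref{Thm:stab_R0=1} for $\RR_0=1$, and the global attractivity $\omega(\widehat{x})=\{E_0\}$ for all $\widehat{x}\in\X_+$ coming from the Lyapunov functional $L_0$. Your use of item 3 of Lemma \ref{Lemma:OmegaLimit}, and your remark that global asymptotic stability requires Lyapunov stability rather than mere attractivity, only make explicit what the paper leaves implicit.
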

It is worth noting that in \cite{magal2010}, the stability was handled by proving the existence of a global attractor that is stable provided that it attracts all compact subsets of a neighbourhood of itself (see \cite[Thm 3.3.2, p. 38]{Hale88} or \cite[Thm 2.39, p. 47]{SmithThieme2011}), then reducing this attractor to the corresponding equilibrium point (parasite-free or endemic equilibrium) proves both the stability and the attractiveness.

\subsection{On the endemic equilibrium}

The search of endemic equilibrium is hard to investigate in the general case. First we handle the case without reinfection, that is we suppose:
\begin{assumption}\label{Assump:r2=0}
We assume that $r_2\equiv 0$.
\end{assumption}

\begin{theorem}\label{Thm:equ-nor2}
Under Assumption \ref{Assump:r2=0} the model \eqref{system} has a unique equilibrium.
\end{theorem}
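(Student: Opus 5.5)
The plan is to reduce the stationary problem to a single scalar equation in the unknown $I_v^*$ and to show that this equation has at most one positive root. I read the statement as follows: besides $E_0$, system \eqref{system} has exactly one endemic equilibrium when $\RR_0>1$ and none when $\RR_0\le 1$. I will prove it in that form.

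\textbf{Step 1 (human components in terms of $I_v^*$).} Let $(s_h^*,i_h^*,r_h^*,S_v^*,I_v^*)$ be an equilibrium and write $I:=I_v^*$ and $m:=\mu_h+\delta+r_1$. Under Assumption \ref{Assump:r2=0} the $s_h$-equation decouples from $r_h$. Solving the stationary ODEs in $a$ with the boundary condition $(\Lambda_h,0,0)$ gives
$$s_h^*(a)=s_h^0(a)e^{-I\int_0^a\beta_v(s)ds},\qquad i_h^*(a)=I\int_0^a\beta_v(s)s_h^*(s)e^{-\int_s^a m(\xi)d\xi}ds,$$
and $r_h^*$ is then obtained explicitly from $i_h^*$. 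All three lie in $L^1_+$, by the same bounds as in Theorem \ref{theorem_global}. Hence $\lambda_v^*=\int_0^\infty\beta_h i_h^*=I\,H(I)$, where
$$H(I)=\int_0^\infty\beta_h(a)\int_0^a\beta_v(s)s_h^0(s)e^{-I\int_0^s\beta_v}e^{-\int_s^a m}ds\,da .$$

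\textbf{Step 2 (vector equations).} The stationary mosquito equations give $S_v^*=\Lambda_v/(\mu_v+IH(I))$ and $\mu_v I=IH(I)S_v^*$. Either $I=0$, which forces $i_h^*=r_h^*=0$ and returns $E_0$, or $I>0$ satisfies
$$1=G(I):=\frac{\Lambda_v}{\mu_v}\cdot\frac{1}{\mu_v/H(I)+I}.$$
This requires $H(I)>0$, which holds for every $I\ge0$. Indeed, Assumption \ref{Assumption:Omega} with the bounds $e^{-\int m}\ge e^{-c(a-s)}$ and $e^{-I\int\beta_v}>0$ gives $H(I)>0$. By Fubini and \eqref{Eq:R0}, $G(0)=S_v^0H(0)/\mu_v=\RR_0^2$.

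\textbf{Step 3 (monotonicity and conclusion).} The integrand of $H$ is nonincreasing in $I$, so $H$ is nonincreasing and continuous (by dominated convergence). Therefore $I\mapsto \mu_v/H(I)+I$ is continuous and strictly increasing, because of the $+I$ term, and $G$ is continuous and strictly decreasing on $[0,\infty)$. Moreover $G(I)\le \Lambda_v/(\mu_v I)\to0$ as $I\to\infty$. Consequently $G(I)=1$ has exactly one positive root if $\RR_0>1$ and none if $\RR_0\le1$. Each root determines all components uniquely through Step 1 and Step 2, and they are positive.

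The main obstacle is the careful justification of positivity and continuity of $H$. This is where Assumption \ref{Assumption:Omega} is used to rule out $H\equiv0$ and to guarantee that $G(0)=\RR_0^2$ after swapping the order of integration. The monotonicity argument itself is straightforward once $G$ is written in the form above.
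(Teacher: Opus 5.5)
Your proof is correct and follows essentially the paper's route: both reduce the stationary problem to a single scalar equation whose right-hand side is continuous, strictly decreasing and equal to $\RR_0^2$ at zero. The only (immaterial) difference is that the paper takes $\int_0^\infty \beta_h(a) i_h^*(a)\,da$ as the unknown where you take $I_v^*$. Your version is slightly more complete: you verify $G(I)\to 0$ as $I\to\infty$, treat $\RR_0\le 1$, check $H(I)>0$ via Assumption~\ref{Assumption:Omega}, and correctly get $\RR_0^2$ as the value at zero (the paper writes $f(0)=\RR_0$).
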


\begin{proof}
A steady state $E^*=(s_h^*,i_h^*,r_h^*,S_v^*,I_v^*)$ of the model \eqref{system} is given by the following equations:
\begin{equation}\label{Eq:system_e*}
\left\{
\begin{array}{ll}
(s_h^*)'(a) & =  -\beta_v(a) s_h^*(a) I_v^* - \mu_h(a) s_h^*(a)\\ 
(i_h^*)'(a) & = \beta_v(a) s_h^*(a) I_v^* - (\mu_h(a)+r_1(a)+\delta(a)) i_h^*(a)\\
(r_h^*)'(a) & = r_1(a) i_h^*(a) - \mu_h(a) r_h^*(a) \\ 
0 & = \Lambda_v  - S_v^* \int_0^{+\infty} \beta_h(a) i_h^*(a)da - \mu_v S_v^* \\
0 & = S_v^* \int_0^{+\infty} \beta_h(a) i_h^*(a)da - \mu_v I_v^* \\
\end{array} \right.
\end{equation}
It follows that
$$S_v^*=\dfrac{\Lambda_v}{\mu_v+\int_0^\infty \beta_h(a)i_h^*(a)da}, \qquad I_v^*=\dfrac{\Lambda_v \int_0^\infty \beta_h(a)i_h^*(a)da}{\mu_v(\mu_v+\int_0^\infty \beta_h(a)i_h^*(a)da)}$$
and
$$s_h^*(a)=\Lambda_h e^{-\int_0^a (\beta_v(s) I_v^*+\mu_h(s))ds}$$
$$i_h^*(a)=\Lambda_h I_v^*\int_0^a \beta_v(s)e^{-\int_0^s(\beta_v(\xi) I_v^*+\mu_h(\xi))d\xi}e^{-\int_s^a (\mu_h(\xi)+r_1(\xi)+\delta(\xi))d\xi}ds.$$
Multiplying the latter equation by $\beta_h(a)$ and integrating w.r.t. $a$ we get
$$1=\left(\dfrac{\Lambda_h \Lambda_v}{\mu_v\left(\mu_v+\int_0^\infty \beta_h(z)i_h^*(z)dz\right)}\right)\int_0^\infty \beta_h(a)\int_0^a \beta_v(s)e^{-\int_0^s\frac{\Lambda_v\beta_v(\xi)\int_0^\infty \beta_h(z)i_h^*(z)dz}{\mu_v(\mu_v+\int_0^\infty \beta_h(z)i_h^*(z)dz)}+\mu_h(\xi))d\xi} e^{-\int_s^a (\mu_h(\xi)+r_1(\xi)+\delta(\xi))d\xi}ds da.$$
Writing the latter equation as $1=f(\int_0^\infty \beta_h(z)i_h^*(z)dz)$, where $f:\R_+\to \R_+$ is a continuous decreasing function with $f(0)=\RR_0>1$, we deduce that there exists a unique positive solution to $f(x)=1$ which is $\int_0^\infty \beta_h(z)i_h^*(z)dz$. It follows that there exists a unique solution to \eqref{Eq:system_e*} and then a unique equilibrium to \eqref{system}.
\end{proof}

Now, we assume that Assumption \ref{Assump:r2=0} does not hold but we neglect the age-dependency of the parameters:
\begin{assumption}\label{Assump:no_age}
We assume that the parameters $\beta_h$, $\beta_v$, $r_1$, $r_2$ and $\mu_h$ are constants.
\end{assumption}

\begin{theorem}\label{Thm:equ-noage}
Under Assumption \ref{Assump:no_age}, the model \eqref{Eq:EDO} has a unique equilibrium.
\end{theorem}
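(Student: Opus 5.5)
Our plan is to reduce the steady-state problem for \eqref{Eq:EDO} to a scalar equation in the human force of infection $x:=\beta_v I_v^*$, and then show that the resulting function is strictly monotone. Here $\beta_h,\beta_v,r_1,r_2,\mu_h$ are constant, and $\delta$ is treated as a constant as in \eqref{Eq:EDO}. We read the statement as saying that, besides the parasite-free equilibrium, there is exactly one endemic equilibrium, and that it exists precisely when $\RR_0>1$, in analogy with Theorem \ref{Thm:equ-nor2}.

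\emph{Human compartments.} We express $(S_h^*,I_h^*,R_h^*)$ in terms of $x$. The third equation gives $R_h^*=\frac{r_1}{r_2+\mu_h}I_h^*$, and the second gives $I_h^*=\frac{x S_h^*}{\mu_h+\delta+r_1}$. Substituting both into the first equation yields
$$S_h^*=\frac{\Lambda_h}{\mu_h+x(1-k)}, \qquad k:=\frac{r_1r_2}{(r_2+\mu_h)(\mu_h+\delta+r_1)}.$$
The key point is that $0\le k<1$, since $r_2<r_2+\mu_h$ and $r_1<\mu_h+\delta+r_1$ (recall $\mu_h\ge\mu_0>0$). Consequently
$$I_h^*=I(x):=\frac{\Lambda_h\,x}{(\mu_h+\delta+r_1)(\mu_h+x(1-k))},$$
which is strictly increasing in $x\ge 0$.

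\emph{Vector compartments and the scalar equation.} From the last two equations of \eqref{Eq:EDO} we get $S_v^*=\frac{\Lambda_v}{\mu_v+\beta_h I_h^*}$ and $I_v^*=\frac{\Lambda_v\beta_h I_h^*}{\mu_v(\mu_v+\beta_h I_h^*)}$. Closing the loop with $x=\beta_v I_v^*$ gives a fixed-point equation. The root $x=0$ corresponds to $E_0$. For $x>0$ we divide by $x$ and obtain $1=h(x)$, where
$$h(x)=\frac{\beta_v\beta_h\Lambda_v\Lambda_h}{\mu_v(\mu_h+\delta+r_1)\,(\mu_h+x(1-k))\,(\mu_v+\beta_h I(x))}.$$
Both factors in the denominator are positive and increasing, and the first is strictly increasing since $1-k>0$. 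Hence $h$ is continuous and strictly decreasing on $\R_+$, with $h(x)\to 0$ as $x\to\infty$.

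\emph{Conclusion.} It remains to identify $h(0)$. Evaluating \eqref{Eq:R0} with constant parameters gives $\RR_0^2=\frac{\Lambda_v}{\mu_v^2}\cdot\frac{\beta_h\beta_v\Lambda_h}{\mu_h(\mu_h+\delta+r_1)}$, which is exactly $h(0)$. If $\RR_0>1$, the intermediate value theorem together with strict monotonicity gives a unique $x^*>0$ with $h(x^*)=1$. Every component of the equilibrium is then uniquely and positively determined by $x^*$ through the formulas above. If $\RR_0\le 1$, there is no positive root and $E_0$ is the only equilibrium.

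The only step that needs care is the sign of $1-k$. This is where the waning-immunity feedback $r_2R_h$ could in principle destroy monotonicity, and where backward bifurcation would enter in the ratio-dependent case. We expect the argument to go through here because, with mass action, this feedback only rescales the effective force of infection by the factor $1-k\in(0,1]$.
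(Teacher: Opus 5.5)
Your proof is correct and is essentially the paper's argument: both discard the root $I_v^*=0$ (which gives $E_0$) and reduce the remaining equilibrium conditions to a single scalar equation in $I_v^*$, whose positive solvability is governed by the sign of $\RR_0^2-1$. In fact $(\mu_h+x(1-k))I(x)$ is linear in $x$, so clearing denominators in $h(x)=1$ gives $x\bigl(\mu_v(1-k)+\frac{\beta_h\Lambda_h}{\mu_h+\delta+r_1}\bigr)=\mu_v\mu_h(\RR_0^2-1)$; this is exactly the paper's explicit formula $I_v^*=c(\RR_0^2-1)$, and your inequality $k<1$ is precisely what the paper needs, without spelling it out, for $c>0$.
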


\begin{proof}
Since Assumption \ref{Assump:no_age} holds, then \eqref{system} rewrites as \eqref{Eq:EDO} after integration w.r.t. $a$. The formula of the basic reproduction number simply becomes:
\[\RR _0 = \sqrt{\frac{\beta_h S_v^0}{\mu_h+\delta+r_1} \times \frac{\beta_v S_h^0}{\mu_v}} \]
with $S_v^0 = \frac{\Lambda_v}{\mu_v}$ and $S_h^0 = \frac{\Lambda_h}{\mu_h}$. The search of endemic equilibria brings us to
\begin{equation}\label{Eq:equ_EDO}
\left\{ \begin{array}{lll}
\Lambda_h+r_2 R_h^* & =  (\beta_v I_v^*+\mu_h) S_h^* \\ 
\beta_v I_v^* S_h^* & = (\mu_h+\delta+r_1)I_h^*\\
r_1 I_h^* & =(r_2+\mu_h) R_h^* \\ 
\Lambda_v & = \beta_h S_v^* I_h^* + \mu_v S_v^* \\
\beta_h S_v^*I_h^* & = \mu_v I_v^*.
\end{array}
\right. 
\end{equation}
Then we get
\begin{align*}
\mu_v I_v^* & = \dfrac{ \beta_h S_v^* \beta_v I_v^* S_h^*}{\mu_h+\delta+r_1}\\
			&= \left(\dfrac{ \beta_h S_v^* \beta_v I_v^*}{\mu_h+\delta+r_1}\right)\times \left(\dfrac{\Lambda_h+r_2R_h^*}{\beta_v I_v^*+\mu_h}\right) \\
			&= \left(\dfrac{\beta_v I_v^*}{(\mu_h+\delta+r_1)(\beta_v I_v^*+\mu_h)}\right)\times \beta_h S_v^* \left(\Lambda_h+\dfrac{r_2r_1 I_h^*}{r_2+\mu_h}\right) \\
			&= \left(\dfrac{\beta_v I_v^*}{(\mu_h+\delta+r_1)(\beta_v I_v^*+\mu_h)}\right)\times \left(\Lambda_h\beta_h\left(\dfrac{\Lambda_v}{\mu_v}-I_v^*\right)+\dfrac{r_2r_1 \mu_vI_v^*}{r_2+\mu_h}\right).
\end{align*}
whence either
\[I_v^* = 0 \quad  \mbox{ or } \quad \mu_v (\beta_v I_v^* + \mu_h)  = \frac{\Lambda_h \beta_h \beta_v}{\mu_h+\delta+r_1} \left(\frac{\Lambda_v}{\mu_v}-I_v^*\right)   + \frac{r_1 r_2 \beta_v}{(r_2+\mu_h)(\mu_h+\delta+r_1)} \mu_v I_v^*.\]
leading, if $I_v^*\neq 0$, to 
\begin{equation*}
\displaystyle I_v^* = c \left(\RR_0^2  - 1\right)
\label{ee}
\end{equation*}
where
$$\displaystyle c = \frac{\mu_h \mu_v(\mu_h+\delta+r_1)(\mu_h+r_2)}{\beta_v \mu_v(\mu_h+\delta+r_1)(\mu_h+r_2) + \Lambda_h\beta_v \beta_h(\mu_h+r_2)-\beta_v\mu_vr_1r_2} > 0$$
if $\RR_0>1$. It follows that
$$S_v^*=\dfrac{\Lambda_v}{\mu_v}-I_v^*, \qquad  I_h^*=\dfrac{\mu_v I_v^*}{\beta_h S_v^*}, \qquad R_h^*=\dfrac{r_1 I_h^*}{r_2+\mu_h}, \qquad S_h^*=\dfrac{\Lambda_h+r_2R_h^*}{\beta_v I_v^*+\mu_h}.$$

\end{proof}

We end this section with a proof of local asymptotic stability of the endemic equilibrium in a specific case.

\begin{theorem}
Under Assumptions \ref{Assump:r2=0} and \ref{Assump:no_age}, the unique endemic equilibrium $E^*$ of \eqref{Eq:EDO} is LAS.
\end{theorem}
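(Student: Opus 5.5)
Since $r_2\equiv 0$ under Assumption \ref{Assump:r2=0}, the plan is to work directly on the ODE \eqref{Eq:EDO} linearised at $E^*=(S_h^*,I_h^*,R_h^*,S_v^*,I_v^*)$. The first step is to reduce the dimension. With $r_2=0$, $R_h$ does not enter any other equation, so its equation decouples and contributes the eigenvalue $-(r_2+\mu_h)=-\mu_h<0$. Next, $N_v=S_v+I_v$ satisfies $N_v'=\Lambda_v-\mu_v N_v$, which gives the eigenvalue $-\mu_v<0$. Passing to the coordinates $(S_h,I_h,I_v,N_v,R_h)$ makes the Jacobian block-triangular. It therefore suffices to study the $3\times 3$ system in $(S_h,I_h,I_v)$ obtained by substituting $S_v=\Lambda_v/\mu_v-I_v$.

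Set $a=\beta_v I_v^*+\mu_h$, $k=\mu_h+\delta+r_1$, $b=\beta_h I_h^*+\mu_v$, $c=\beta_h S_v^*$, $e=\beta_v S_h^*$ and $g=\beta_v I_v^*$. The reduced Jacobian is then
$$J=\begin{pmatrix} -a & 0 & -e\\ g & -k & e\\ 0 & c & -b\end{pmatrix}.$$
Expanding $\det(\lambda I-J)$ along the first row gives the characteristic polynomial
$$P(\lambda)=(\lambda+a)\bigl(\lambda^2+(k+b)\lambda+d\bigr)+egc, \qquad d:=kb-ec.$$
The key identity comes from the equilibrium relations \eqref{Eq:equ_EDO}. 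From $eI_v^*=kI_h^*$ and $\mu_vI_v^*=cI_h^*$ we get $ec=k\mu_v$, and hence $d=k(b-\mu_v)=k\beta_h I_h^*>0$. This uses that $I_h^*>0$, which holds because $\RR_0>1$ and Theorem \ref{Thm:equ-noage} applies.

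Writing $P(\lambda)=\lambda^3+a_1\lambda^2+a_2\lambda+a_3$, the coefficients are $a_1=a+k+b$, $a_2=a(k+b)+d$ and $a_3=ad+egc$, all positive. It remains to check the Routh--Hurwitz condition $a_1a_2>a_3$. I expect this to be the main (though mild) obstacle, and the approach is term-matching. The product $a_1a_2$ contains $a\cdot d$, which matches the first part of $a_3$. It also contains the term $akb$ (coming from $a\cdot a(k+b)$ and related cross terms). Since $egc=gk\mu_v$, with $a>g$ and $b>\mu_v$, we get $akb>gk\mu_v=egc$. All remaining terms of $a_1a_2$ are nonnegative, so $a_1a_2>a_3$.

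Hence every root of $P$ has negative real part. Combined with the eigenvalues $-\mu_h$ and $-\mu_v$, all eigenvalues of the full $5\times 5$ Jacobian lie in the open left half-plane. The linearisation principle for ODEs then yields that $E^*$ is LAS.
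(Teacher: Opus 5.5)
Your proof is correct and is essentially the paper's argument. Both linearise \eqref{Eq:EDO}, split off the eigenvalue $-\mu_v$ through $S_v+I_v$, reach the same cubic $(\lambda+a)(\lambda+k)(\lambda+b)-ec(\lambda+\mu_h)$, use $\beta_h\beta_v S_h^*S_v^*=\mu_v(\mu_h+\delta+r_1)$, and close Routh--Hurwitz by bounding the constant term with $akb$ (the paper's $a_2a_1\geq kab>a_0$).

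Your block-triangular change of variables is slightly cleaner: it explicitly records the harmless eigenvalue $-\mu_h$ from the decoupled $R_h$ equation, which the paper's elimination step overlooks. One small expository slip: the term $akb$ comes from $(k+b)\cdot a(k+b)$, not from $a\cdot a(k+b)$.
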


\begin{proof}
The existence and uniqueness of $E^*$ is shown in Theorems \ref{Thm:equ-nor2}-\ref{Thm:equ-noage}, and the components are explicited in the proof of Theorem \ref{Thm:equ-noage} (with $r_2\equiv 0$).
Linearising the system \eqref{Eq:EDO} around $E^*$, we get
\begin{equation*}
\left\{ \begin{array}{lll}
S_h'(t)& =-\beta_v I_v^*S_h(t)-\beta_v I_v(t)S_h^*-\mu_h S_h(t)  \\ 
I_h'(t)& = \beta_v I_v^*S_h(t)+\beta_vI_v(t)S_h^* - (\mu_h+\delta+r_1)I_h(t)\\
R_h'(t) & = r_1 I_h(t) - \mu_h R_h(t) \\ 
S_v'(t) & = - \beta_h S_v^* I_h(t)-\beta_h S_v(t)I_h^* - \mu_v S_v(t) \\
I_v'(t) & = \beta_h S_v^*I_h(t)+\beta_h S_v(t)I_h^*- \mu_v I_v(t).
\end{array}
\right.
\end{equation*}
The eigenvalues $\lambda\in \C$ then satisfy the following system:
\begin{equation}\label{Eq:EDO-lin}
\left\{ \begin{array}{lll}
(\lambda+\mu_h+\beta_v I_v^*)S_h&=&-\beta_v I_v S_h^* \\ 
(\lambda+\mu_h+\delta+r_1)I_h&=&\beta_v I_v^*S_h+\beta_vI_vS_h^* \\
(\lambda+\mu_h)R_h&=&r_1I_h\\ 
(\lambda+\mu_v+\beta_h I_h^*)S_v&=&-\beta_h S_v^*I_h \\
(\lambda+\mu_v)I_v&=&\beta_h S_v^*I_h+\beta_h S_vI_h^*.
\end{array}
\right.
\end{equation}
The first two equations of \eqref{Eq:EDO-lin} lead to
$$(\lambda+\mu_h+\delta+r_1)I_h=\beta_vI_vS_h^*\left(1-\dfrac{\beta_v I_v^*}{\lambda+\mu_h+\beta_vI_v^*}\right)$$
while the last two equations of \eqref{Eq:EDO-lin} give
$$(\lambda+\mu_v)(S_v+I_v)=0.$$
If $\lambda=-\mu_v<0$ then $\lambda$ is a negative eigenvalue, otherwise $I_v=-S_v$. From the fourth equation of \eqref{Eq:EDO-lin} we deduce that
$$(\lambda+\mu_h+\delta+r_1)I_h=\beta_vS_h^*\left(1-\dfrac{\beta_v I_v^*}{\lambda+\mu_h+\beta_vI_v^*}\right)\dfrac{\beta_hS_v^*I_h}{\lambda+\mu_v+\beta_hI_h^*}.$$
If $I_h=0$ then $(S_h,I_h,R_h,S_v,I_v)=0$ and $\lambda$ is not an eigenvalue. Otherwise we get $p(\lambda)=0$ where
$$p(\lambda)=(\lambda+\mu_h+\delta+r_1)(\lambda+\mu_v+\beta_hI_h^*)(\lambda+\mu_h+\beta_v I_v^*)-\beta_h\beta_v S_h^* S_v^*(\lambda+\mu_h)$$
which rewrites as the third degree polynomial function $p(\lambda)=a_3\lambda^3+a_2\lambda^2+a_1\lambda+a_0$ with
\begin{equation}
\left\{
\begin{array}{rcl}
a_3&=&1 \\
a_2&=&(\mu_h+\delta+r_1)+(\mu_v+\beta_hI_h^*)+(\mu_h+\beta_vI_v^*) \\
a_1&=&(\mu_h+\delta+r_1)(\mu_v+\beta_hI_h^*)+(\mu_h+\delta+r_1)(\mu_h+\beta_vI_v^*)+(\mu_v+\beta_hI_h^*)(\mu_h+\beta_vI_v^*)-\beta_h\beta_vS_h^*S_v^* \\
a_0&=&(\mu_h+\delta+r_1)(\mu_v+\beta_hI_h^*)(\mu_h+\beta_vI_v^*)-\mu_h\beta_h\beta_vS_h^*S_v^*.
\end{array}
\right.
\end{equation}
We readily see that $a_3>0$ and $a_2>0$. From \eqref{Eq:equ_EDO} we deduce that
$$\beta_h\beta_vS_v^*S_h^*=\mu_v(\mu_h+\delta+r_1)$$
whence
$$a_1=\beta_hI_h^*(\mu_h+\delta+r_1)+(\mu_h+\delta+r_1)(\mu_h+\beta_vI_v^*)+(\mu_v+\beta_hI_h^*)(\mu_h+\beta_vI_v^*)>0$$
and
$$a_0=(\mu_h+\delta+r_1)\left((\mu_v+\beta_hI_h^*)(\mu_h+\beta_vI_v^*)-\mu_h\mu_v\right)>0.$$
To find the roots of $p$ with non-negative real part, we use the criterion of Routh-Hurwitz. We compute
$$b_1=\dfrac{-1}{a_1}\left|\begin{pmatrix}
a_3 & a_1 \\
a_2 & a_0
\end{pmatrix}\right|=a_2-\dfrac{a_3a_0}{a_1}.$$
It follows that $b_1>0 \Longleftrightarrow a_2a_1>a_0$.
We see that
$$a_2a_1\geq (\mu_h+\delta+r_1)(\mu_v+\beta_hI_h^*)(\mu_h+\beta_vI_v^*)>a_0$$
whence $b_1>0$. Finally we compute
$$c_0=\dfrac{-1}{b_1}\left|\begin{pmatrix}
a_2 & a_0 \\
b_1 & 0
\end{pmatrix}\right|=a_0>0.$$
We conclude by the criterion that there exists no roots of $p$ with non-negative real part, hence $E^*$ is LAS.
\end{proof}

\section{Numerical simulations}

In this section, we show numerical simulations to validate the previous theoretical results and provide insights into the stability of the endemic equilibrium.

\subsection{Numerical scheme}

The numerical method for the model \eqref{system} is based on finite volumes methods. We introduce $\Delta t$ and $\Delta a$, the constant time and age steps respectively. Denoting $\lceil a \rceil$ the upper integer part of $a$, we define $N_a = \lceil a_{\max}/\Delta a \rceil$ and $N_T = \lceil T/\Delta t \rceil$ where $a_{\max}$ and $T$ are respectively the maximal time and maximal age of the numerical simulation.

Let us introduce the points $a_{j+1/2} = j\Delta a$, for $0 \leq j \leq N_a$ and the cells $K_j = [a_{j-1/2}, a_{j+1/2}[$, the centers of the cells $a_j = (j - 1/2)\Delta a$, for $1 \leq j \leq N_a$ and $t^n = n\Delta t$ for $0 \leq n \leq N_T$. For $1 \leq j \leq N_a$ and $0 \leq n \leq N_T$, we denote by $s^n_{h,j}, i^n_{h,j}$ and $r^n_{h,j}$ respectively the approximation of the average of $s_h(t^n, \cdot), i_h(t^n, \cdot), r_h(t^n, \cdot)$ on the cell $K_j$, namely

\[
s^0_{h,j} = \frac{1}{\Delta a} \int_{K_j} s_{h,0}(a)da, \quad
i^0_{h,j} = \frac{1}{\Delta a} \int_{K_j} i_{h,0}(a)da, \quad
r^0_{h,j} = \frac{1}{\Delta a} \int_{K_j} r_{h,0}(a)da
\]

and

\[
s^n_{h,j} \approx \frac{1}{\Delta a} \int_{K_j} s_h(t^n, a)da, \quad
i^n_{h,j} \approx \frac{1}{\Delta a} \int_{K_j} i_h(t^n, a)da, \quad
r^n_{h,j} \approx \frac{1}{\Delta a} \int_{K_j} r_h(t^n, a)da
\]

for every $n \geq 1$. We also define

\[
S^0_v = S_{v,0}, \quad I^0_v = I_{v,0}.
\]

For $1 \leq j \leq N_a$, we let

\[
\beta_{v,j} = \frac{1}{\Delta a} \int_{K_j} \beta_v(a)da, \quad
\beta_{h,j} = \frac{1}{\Delta a} \int_{K_j} \beta_h(a)da, \quad
\mu_{h,j} = \frac{1}{\Delta a} \int_{K_j} \mu_h(a)da,
\]

\[
\delta_j = \frac{1}{\Delta a} \int_{K_j} \delta(a)da, \quad
r_{1,j} = \frac{1}{\Delta a} \int_{K_j} r_1(a)da, \quad
r_{2,j} = \frac{1}{\Delta a} \int_{K_j} r_2(a)da.
\]

The numerical scheme is :

\[
\begin{cases}
S^{n+1}_v = \dfrac{S^n_v + \Lambda_v \Delta t}{1 + \Delta t \Delta a \sum_{j=1}^{N_a} \beta_{h,j} i^n_{h,j} + \mu_v \Delta t} \vspace{0.1cm} \\

I^{n+1}_v = \dfrac{I^n_v + \Delta t \Delta a \sum_{j=1}^{N_a} \beta_{h,j} i^n_{h,j} S^{n+1}_v}{1 + \Delta t \mu_v} \vspace{0.1cm} \\

s^{n+1}_{h,j} = \dfrac{s^n_{h,j} (1 - \frac{\Delta t}{\Delta a}) + \frac{\Delta t}{\Delta a} s^n_{h,j-1} +  r_{2,j} r^n_{h,j}\Delta t}{1 + \Delta t \beta_{v,j} I^{n+1}_v + \Delta t \mu_{h,j}} \vspace{0.1cm} \\

i^{n+1}_{h,j} = \dfrac{i^n_{h,j} (1 - \frac{\Delta t}{\Delta a}) + \frac{\Delta t}{\Delta a} i^n_{h,j-1} + \Delta t \beta_{v,j} I^{n+1}_v s^{n+1}_{h,j}}{1 + \Delta t (\mu_{h,j} + \delta_j + r_{1,j})} \vspace{0.1cm} \\

r^{n+1}_{h,j} = \dfrac{r^n_{h,j} (1 - \frac{\Delta t}{\Delta a}) + \frac{\Delta t}{\Delta a} r^n_{h,j-1} +  r_{1,j} i^{n+1}_{h,j}\Delta t}{1 + \Delta t (r_{2,j} + \mu_{h,j})}
\end{cases}
\]

for every $n \geq 0$ and $j \geq 1$ with the boundary conditions :

\[
\begin{cases}
s^{n+1}_{h,0} = \Lambda_h \\
i^{n+1}_{h,0} = 0 \\
r^{n+1}_{h,0} = 0
\end{cases}
\]

\subsection{Parameters}

For the numerical simulations, we choose the parameters summarized in the Table \ref{tab}. The function $\delta$ is an interpolation of the data retrieved in \cite{Streatfield2014} for Burkina Faso, Ouagadougou case.  We consider $\Lambda_h=10^{6}\times 3.37 \cdot 10^{-2}$ with $3.37\cdot 10^{-2}$ the 2023 crude birth rate in Cameroon \cite{UNDESA2024} and a total human population of $10^6$ inhabitants. We assume that the initial human population is at the parasite-free equilibrium, that is 
$$s_{h,0}(a)=\Lambda_h e^{-\int_0^a \mu_h(s)ds}, \quad i_{h,0}\equiv 0, \quad r_{h,0}\equiv 0.$$
We consider logistic functions $r_1, r_2$ such that $r_1(0)=0.5, \lim_{a\to +\infty}r_1(a)=6$ corresponding to the time recovery estimated in \cite{Bekessy76,Molineaux80} and where $r_2(0)=1, \lim_{a\to +\infty}r_2(a)=1/5$ corresponding to a period of immunity between 1 and 5 years as assumed in \cite{chitnis08}. We decompose the force of infection $\lambda_v(t)$ as
$$\lambda_v(t)=\frac{\theta \int_0^\infty G(a)i_h(t,a)da}{\Lambda_h \int_0^\infty e^{-\int_0^s \mu_h(\xi)d\xi}ds}$$
where:
\begin{enumerate}[label=\tiny $\bullet$]
\item $\dfrac{i_h(t,a)}{\Lambda_h \int_0^\infty e^{-\int_0^s \mu_h(\xi)d\xi}ds}$ is the proportion of infected human of age $a$ at time $t$ (supposing a constant population given by the parasite-free equilibrium),
\item $\theta$ is the human feeding rate, that is the expected number of bites on humans per mosquito per year, taken as $\theta=0.6\times 365$ \cite{chitnis06},
\item $G(a)$ is the probability that a human of age $a$ transmits the infection to a susceptible biting mosquito which is considered as 
$$G(a)=7.1 \cdot  10^{-2} (G_0(a))^{3.02\cdot 10^{-1}}$$
(see \cite{Bradley2018}), with $G_0(a)$ the mean number of gametocytes for a human of age $a$, estimated in \cite{Churcher2013}. 
\end{enumerate}
Similarly, we decompose the force of infection $\lambda_h(t,a)$ as
$$\lambda_h(t,a)=\dfrac{\theta \tilde{\beta_v}G_0(a)I_v(t)}{\|G_0\|_{L^\infty}\Lambda_h \int_0^\infty e^{-\int_0^s \mu_h(\xi)d\xi}ds}$$
where $\frac{\tilde{\beta_v}G_0(a)}{\|G_0\|_{L^\infty}}$ is the probability that a susceptible human of age $a$ gets infected from a mosquito bite, with $G_0$ describing the vulnerability for humans to be infected and normalized such that the maximum is $\tilde{\beta_v}$ which is estimated in \cite{chitnis08}.

The parameter $\Lambda_v$ is adjusted to obtain different values of $\RR_0$. As in \cite{niger08} we assume a mosquito life expectancy of $20$ days leading to $\mu_v=365/20$ year$^{-1}$, despite the high variability of this parameter in practice (see \cite{Richard24} and the references therein for a discussion on this topic). Finally, we consider a susceptible mosquito population at the parasite-free equilibrium, $S_{v,0}=\frac{\Lambda_v}{\mu_v}$, while $I_{v,0}$ is allowed to vary.

\begin{table}[!hbtp]
\begin{center}
\begin{tabular}{|l|l|l|l|}
\hline
\textbf{Parameters} & \textbf{Value} & \textbf{Unit}& \textbf{References} \\
\hline 
$\Lambda_h$ & $3.37\cdot 10^{4}$ & year$^{-1}$  &\cite{UNDESA2024} \\
\hline 
$\mu_h(a)$ & $5.8\left(2\cdot 10^{-3}+9\cdot 10^{-2}e^{-2.1a}+10^{-4}e^{9\cdot 10^{-2}a}\right)$ & year$^{-1}$  &\cite{Zhuolin23} \\
\hline
$\delta(a)$ & $10^{-3}\left(6.58\cdot 10^{-1}e^{-3.405\cdot 10^{-1}a}+4\cdot 10^{-4}e^{4.44\cdot 10^{-2}a}\right)$ & year$^{-1}$  &\cite{Streatfield2014} \\
\hline
$r_1(a)$ & $6\cdot(1+11e^{-0.05a})^{-1}$  & year$^{-1}$ & \cite{Bekessy76,Molineaux80}\\
\hline
$r_2(a)$ & $(5-4e^{-0.05a})^{-1}$ & year$^{-1}$  & \cite{chitnis08} \\
\hline
$\Lambda_v$ & Varies  &  year$^{-1}$ & Assumed \\
\hline
$\mu_v$ & $18.25$ & year$^{-1}$& \cite{niger08}\\
\hline
$\theta$ & $219$ & year$^{-1}$& \cite{chitnis06}\\
\hline 
$G_0(a)$ & $22.7a e^{-0.0934a}$ & no unit & \cite{Churcher2013}\\
\hline 
$G(a)$ & $7.1 \cdot  10^{-2} (G_0(a))^{3.02\cdot 10^{-1}}$ & no unit & \cite{Bradley2018}\\
\hline
$\beta_h(a)$ & $\theta \cdot G(a) \cdot (\Lambda_h \int_0^\infty e^{-\int_0^s \mu_h(\xi)d\xi}ds)^{-1}$   & year$^{-1}$  & \cite{Richard24}  \\
\hline
$\tilde{\beta_v}$ & $0.022$ & no unit & \cite{chitnis08} \\
\hline
$\beta_v(a)$ & $\theta\cdot \tilde{\beta_v}G_0(a)\cdot \|G_0\|_{L^\infty}^{-1}\cdot (\Lambda_h \int_0^\infty e^{-\int_0^s \mu_h(\xi)d\xi}ds)^{-1}$ & year$^{-1}$ & Assumed \\
\hline  
\end{tabular}
\end{center}
\caption{Parameters for the numerical simulations of \eqref{system}}
\label{tab}
\end{table}

\subsection{Simulations}

In the Figure \ref{Fig:R0>1}, we set $\Lambda_v=5\cdot 10^6$ leading to $\RR_0\approx 1.52$ and we consider four different initial conditions: $I_{v,0}\in \{10^3, 10^4, 10^5, 2\cdot 10^5\}$. Each solution converge to the same endemic equilibrium. In the Figure \ref{Fig:R0<1}, we set $\Lambda_v=1.5\cdot 10^6$ leading to $\RR_0\approx 0.83$ with the same four initial conditions. We plot in log scale to see the decreasing of the solutions to the parasite-free equilibrium.

\begin{figure}[!hbtp]
\begin{center}
\begin{tabular}{cc}
\includegraphics[scale=0.5]{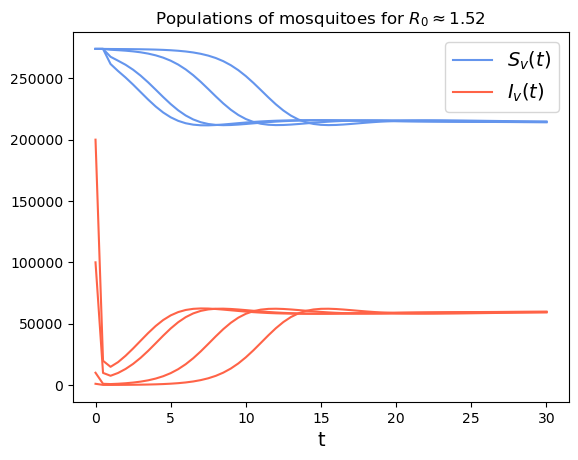} & \includegraphics[scale=0.5]{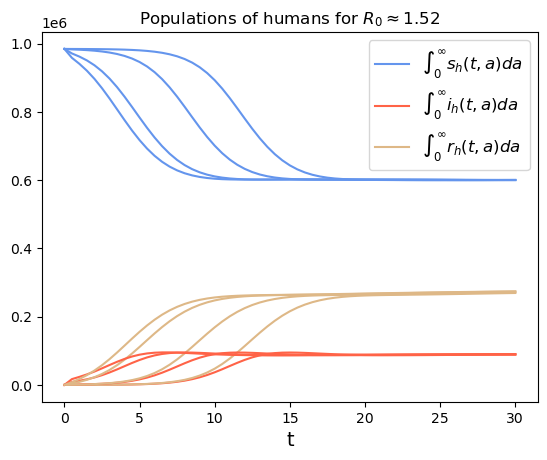}
\end{tabular}
\caption{Convergence to an endemic equilibrium when $\RR_0>1$ for different initial conditions}
\label{Fig:R0>1}
\end{center}
\end{figure}

\begin{figure}[!hbtp]
\begin{center}
\begin{tabular}{cc}
\includegraphics[scale=0.5]{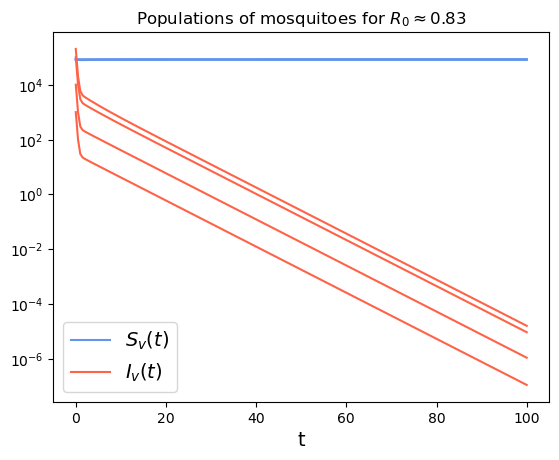} & \includegraphics[scale=0.5]{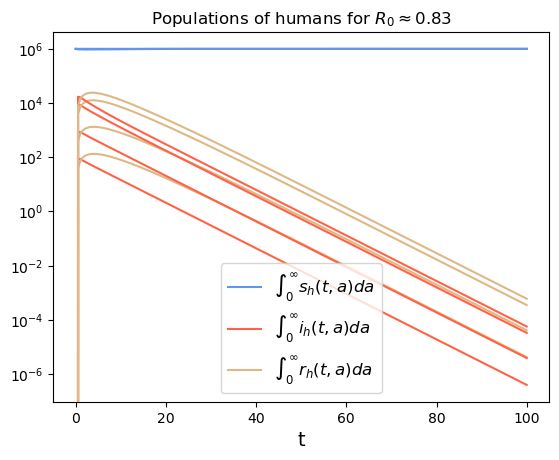}
\end{tabular}
\caption{Convergence to the parasite-free equilibrium when $\RR_0<1$ for different initial conditions}
\label{Fig:R0<1}
\end{center}
\end{figure}

\bibliographystyle{plain} 
\bibliography{bibliomalaria}

\end{document}